\newcommand\independent{\protect\mathpalette{\protect\independenT}{\perp}}
\def\independenT#1#2{\mathrel{\rlap{$#1#2$}\mkern2mu{#1#2}}}
\definecolor{darkgreen}{rgb}{0, .5, 0}
\definecolor{darkred}{rgb}{.5, 0, 0}
\theoremstyle{plain}
\newtheorem{theorem}{Theorem}[section]
\newtheorem{proposition}[theorem]{Proposition}
\newtheorem{corollary}[theorem]{Corollary} 
\newtheorem{assumption}[theorem]{Assumption} 
\newtheorem{lemma}[theorem]{Lemma} 
\newtheorem{example}[theorem]{Example}
\theoremstyle{definition} 
\newtheorem{definition}[theorem]{Definition}
\newtheorem{remark}[theorem]{Remark}
\numberwithin{equation}{section}
\newcommand{\E}{{\mathbb{E}}}
\newcommand{\dd}{{\rm d}}
\newcommand{\1}{\ensuremath{\mathbf{1}}}
\def\P{{\mathbb P}}
\providecommand{\abs}[1]{\ensuremath{\left\lvert#1\right\rvert}}
\providecommand{\norm}[1]{\ensuremath{\lVert#1\rVert}}
\providecommand{\var}{\ensuremath{{\rm Var}}}
\providecommand{\poi}{\ensuremath{{\rm Poi}}}
\providecommand{\cov}{\ensuremath{{\rm Cov}}}
\providecommand{\diag}{\ensuremath{{\rm Diag}}}
\begin{document}
\singlespacing %
\title{Bootstrap Percolation in Directed Inhomogeneous Random Graphs} %
\author{Nils Detering\thanks{Department of Statistics and Applied Probability, University of California, Santa Barbara, CA 93106, USA. Email: detering@pstat.ucsb.edu} \thanks{Nils Detering acknowledges financial support from the LMUexcellent initiative, the "Frankfurter Institut f\"ur Risikomanagement und Regulierung (FIRM)" and the Europlace Finance Institute. 
}\and Thilo Meyer-Brandis\thanks{Department of Mathematics, University of Munich, Theresienstra\ss e 39, 80333 Munich, Germany. Email: meyerbr@math.lmu.de} \and Konstantinos Panagiotou\thanks{Department of Mathematics, University of Munich, Theresienstra\ss e 39, 80333 Munich, Germany. Email: kpanagio@math.lmu.de} \thanks{Supported by DFG grant PA-2080/1.}%
  }
\maketitle

\begin{abstract}
Bootstrap percolation is a process that is used to model the spread of an infection on a given graph. In the model considered here each vertex is equipped with an individual threshold. As soon as the number of infected neighbors exceeds that threshold, the vertex gets infected as well and remains so forever. We perform a thorough analysis of bootstrap percolation on a novel model of directed and inhomogeneous random graphs, where the distribution of the edges is specified by assigning two distinct weights to each vertex, describing the tendency of it to receive edges from or to send edges to other vertices. Under the assumption that the limiting degree distribution of the graph is integrable we determine the typical fraction of infected vertices. Our model allows us to study a variety of settings, in particular the prominent case in which the degree distribution has an unbounded variance. Among other results, we quantify the notion of ``systemic risk'', that is, to what extent local adverse shocks can propagate to large parts of the graph through a cascade, and discover novel features that make graphs prone/resilient to initially small infections.
\end{abstract}




\section{Introduction}\label{sec:intro}

In this paper we study \emph{bootstrap percolation}, which is a classical mathematical model that is used to describe how a certain activity disperses on a given finite graph. In the classical variant of the model, one starts with a non-empty subset of the vertices, the so-called \emph{initially infected set}. The process continues in distinct rounds, and further vertices become active as soon as they have at least a certain fixed number $c \in \mathbb{N}$ of infected neighbors. After a finite number of steps the set of infected vertices will eventually stabilize, and the important question is to quantify its shape as a function of the underlying graph and the initially infected set.

The study of bootstrap percolation has a rather long history, beginning with its invention in 1979 in~\cite{Chalupa1979}, where it was used to investigate the demagnetisation properties of certain crystals. Since then, many important properties of it were studied in a broad variety of different settings, including for example the case where the underlying graph is the $d$-dimensional finite grid $[n]^d$, see \cite{MR1961342,MR2223042,MR2546747} and \cite{Balogh2012}, the extensive study for Erdos-R\'enyi random graphs~\cite{janson2012} and random regular graphs~\cite{MR2283230}, and the cases of tori~\cite{MR2214907} and infinite trees~\cite{MR2430783,MR2248323}.  From today's perspective, however, the underlying graphs that we wish to study are more complex and heterogeneous, and the details of the infection process are more intricate. Let us mention  two characteristic examples that will motivate the definition of our model:
\begin{itemize}
	\item \emph{Financial networks}. The vertices are financial institutions (like banks or insurance companies) and the edges describe monetary dependencies between them, for example a loan from one bank to another. If some institutions go bankrupt, then this may result in a cascade of credit defaults, depending on how much each remaining institution can withstand.
	\item \emph{Social networks}.  The vertices are individuals, who exchange information through announcing messages; in \emph{Twitter} for example, the users may broadcast a message to all of their followers. They, in turn, can broadcast it further, resulting again in a cascade of message transmitions.
\end{itemize}
The graphs in the two previous examples, as well as many others that appear in a variety of similar contexts, have three relevant characteristics. First, they are \emph{heterogeneous}, in the sense that the degree distribution (i.e., the probability that a uniformly random vertex has a given number of neighbors) is far from uniform -- it typically has a heavy tail. This has been verified empirically in a vast number of studies~\cite{Albert2002,Cont2013,Albert1999}. Second, the graphs are \emph{directed}, meaning that the induced relation among the vertices is not necessarily symmetric. Finally, each vertex has an \emph{individual threshold level} according to which it becomes infected; some vertices are more sensitive to activity in their neighborhood than others.

The model that we propose and study encompasses these characteristics. It contains two basic ingredients: a model for random directed graphs, where the degree distribution regarding both incoming and outgoing edges can be prescribed, and a model for bootstrap percolation, where each vertex has its own individual infection threshold. The main results of this paper include a detailed study with respect to all parameters.

Our graph model is defined as follows. Let $n \in \mathbb{N}$. Each vertex $i \in [n] := \{1, \dots, n\}$ is associated with three parameters $w_i^-, w_i^+ \in \mathbb{R}_+$ and  $c_i \in \mathbb{N}_0$. The first parameter $w_i^-$ quantifies the tendency of $i$ to receive edges from other vertices, and similarly $w_i^+$ quantifies its tendency to connect to other vertices. In particular, the probability that the \emph{directed} edge $(i,j)$, where $i \neq j$, is in the graph, is given by $\min\{1, w_i^+ w_j^-/n\}$. Moreover, all these events are assumed to be independent. This model is a generalization of the popular Chung-Lu model~\cite{ChungLu2002}, see also~\cite{Norros2006} and the extensive study~\cite{Bollobas2007}, to the setting of directed graphs.  As an auxiliary result, we show that if the joint empirical distribution of the weight sequences $(w^-_i, w_i^+)_{i \in [n]}$ converges to the distribution of an integrable random variable $(W^-,W^+)$, then the resulting in-degree and out-degree sequences are close to a bivariate mixed Poisson distribution with mixing variable $(W^-,W^+)$. Random directed graphs that were proposed prior to our work are based on the configuration model, where the \emph{actual} in- and out degrees are specified for each vertex, see e.g.~\cite{Cont2013b,Cooper2004}. These models are quite powerful and sufficient in many situations; however, they generate simple graphs (i.e., with no loops and multiple edges) with a probability that is bounded away from zero only if the degree sequence fulfills a second moment condition, see e.g.~\cite{Janson2009}. In our intended applications this condition is only rarely satisfied. The degree sequences that are frequently observed in real-world networks are so heavy tailed that they may have a variance that grows with the number $n$ of vertices, the most prominent case being a power-law distribution with exponent $2<\beta<3$~\cite{Albert1999,Cont2013}. 

The third set of parameters in our model, i.e., the quantities $(c_i)_{i \in [n]}$, describe the sensitiveness of the vertices with respect to activity in their neghborhoods. Our bootstrap percolation process on a given graph $G$ with vertex set $[n]$ is a deterministic procedure that works as follows. There is an initially infected set of vertices ${\cal A}_0 $. For a vertex $i$ let $N_G^-(i) = \{j \in [n]~|~ (j,i) \text{ is an edge of } G\}$ be the in-neighborhood of $i$ in $G$. In the $k$-th generation, where $k\in \mathbb{N}$, the infection spreads to  
\[
	{\cal A}_k = \big\{i \in [n] \mid |N_G^-(i) \cap {\cal A}_{k-1}| \ge c_i\big\}.
\]
That is, as soon as there are $c_i$ infected in-neighbors of $i$, that vertex gets infected as well and remains so forever. A straightforward consequence of this definition is that the sequence ${\cal A}_0, {\cal A}_1, {\cal A}_2, \dots$ stabilizes after at most $n-1$ generations. We say that ${\cal A}_n$ is the set of \emph{finally infected vertices}. The main result of this paper, Theorem~\ref{thm:2}, establishes in several relevant cases the typical size of ${\cal A}_n$ if the underlying graph is a directed inhomogeneous random graph in the setting described previously, and if the \emph{joint} empirical distribution of $(w_i^-, w_i^+, c_i)_{i \in [n]}$ converges to the distribution of a random vector $(W^-, W^+, C)$, where, as before, we assume that $(W^-, W^+)$ is integrable. Our setting is quite general, and it allows in particular for intricate correlation structures among the in-/out-degrees of the vertices and the infection threshold, which are expected to exist in  many natural models. For example, in specific settings it is certainly expected that vertices with high degrees have also a higher threshold, as might be the case in a financial network. Related albeit more restricted settings with varying infection thresholds have been considered before, in particular in the notable works \cite{Lelarge2012,ar:a10,Cont2013b}, where the underlying model for the graph is the configuration model. 

Based on our results we manage to quantify the notion of ``systemic risk'', that is, to which extent a small number of initial infections can propagate through the percolation process to a significant fraction of all vertices. In our setting this corresponds (informally) to the case $\P(C = 0) \to 0$, which essentially prescribes that only a sublinear fraction of vertices is initially infected.  This is a particularly important situation, as in typical applications cascades are triggered by only a small number of vertices in the network -- in a financial crisis, for example, a small number of market participants defaults initially, but this may have a severe effect on a huge part of the network. Among other results, we determine under which conditions on $(W^-, W^+, C)$ the finally infected set contains a (large) \emph{positive} fraction of all vertices that is \emph{independent} of the probability of being initially infected, as long as this is $> 0$; see Sections~\ref{sec:resapp} and~\ref{sec:apps}. Results of this type have not been shown for any other model in previous works.

\paragraph{Paper \& Proof Outline} The paper is structured as follows. In the next section we formulate in more detail our model and state the main results as well as some applications, particularly in the context of quantifying systemic risk. In Section~\ref{sec:model} we study some basic properties of the proposed directed inhomogeneous random graph. The proofs of our main results start in Section~\ref{sec:finitary}, where we first study a special case of our model. Namely,  we consider the setting where the weights $w_i^-, w_i^+$ and threshold value $c_i$ of each vertex may obtain a value from a \emph{finite} set only. Moreover, there we reformulate the activation process in a sequential form such that at each time-step only the infection from one vertex is considered. This reformulation allows us to approximate the dynamics of the system with differential equations, using the method in \cite{wormald1995}, to derive a law of large numbers for the activation process in the sequential description. The use of this method is quite common for such problems, see e.g.~\cite{Amini2014}; however, in our context the application is both conceptually and technically complex due to the three-dimensional nature of our parameter space. Moreover, since the functions defining the differential equations are only Lipchitz continuous on a domain smaller than the one of interest, we develop a novel probabilistic argument to show that the infections outside the considered domain are negligible. In Section \ref{sec:approximation}, we extend our results to the general setting by developing several couplings of the original vertex sequence to tailor-made  sequences with finitely many values; this is the main technical contribution of this paper. Difficulties arise here due to the multi-dimensionality of the vector $(W^-,W^+,C)$ and the fact that it is eventually unbounded. Based on these results, in Section~\ref{sec:apps} we prove our results on systemic risk and provide several examples. Finally, in Section~\ref{extension:relevance} we present some extensions of our main results.

\section{Results \& Applications}\label{sec:resapp}

\paragraph{Random Graph Model} For each $n \in \mathbb{N}$ we consider the vertex set $[n] =\{1,\dots , n\}$ and the set of directed edges $E:=\{(i,j) \mid i,j \in [n], i\neq j \}$. Let $\Omega:= \{0,1\}^{\abs{E}}$ and $\mathcal{F}:=2^\Omega$. We define a probability measure $\mathbb{P}$ on $(\Omega,\mathcal{F})$ in the following way. To each vertex $i\in [n]$ we assign two deterministic weights $w^-_i(n)$ and $w^+_i(n)\in \mathbb{R}_+$ and define the probability 
$p_{i,j}=p_{i,j}(n)$ for $i\neq j$ that there is a directed edge from vertex $i$ to vertex $j$ by
\begin{equation}\label{conn:prob}
p_{i,j}=\min \left\{1,{w^+_i w^-_j}/{n} \right\}.
\end{equation}
Furthermore, we assume that the events that an edge is present happens independent of the presence of all other edges.   The role of $w^-_i$ respectively $w^+_i$ is to determine the tendency of vertex $i\in [n]$ to have incoming respectively outgoing edges. Let further $({\bf w^-})(n)=(w^-_1 (n),\dots,w^-_n(n))$ and $({\bf w^+})(n)=( w_1^+ (n),\dots,w_n^+ (n))$ be the in- and out weight sequences. Observe that all the quantities including $\P, \Omega $ and $\mathcal{F}$ depend on $n$. However, to simplify notation we often neglect to mention explicitly this dependency. We denote the resulting random graph by $G_n ({\bf w}^-(n),{\bf w}^+(n))$ and we abbreviate it with $G_n ({\bf w}^-,{\bf w}^+)$. 

For a pair of in- and out-weight sequences $({\bf w^-}, {\bf w^+})$ we define their empirical distribution 
\begin{equation}
F_n(x,y)=n^{-1}\sum_{i \in [n]} \1 \{w^-_i(n) \leq x, w^+_i(n) \leq y  \}, \;\; \forall x,y \in [0,\infty ).
\end{equation} 
Let in the following $(W^-_n,W^+_n)$ be a random vector with distribution function $F_n(x,y)$. We shall pose some mild assumptions on the weight sequences.
\begin{definition}[\bf Regular Weight Sequence]\label{weight:assump}
We say that the sequence $({\bf w^-}(n), {\bf w^+}(n))_{n \ge 1}$ of pairs of weight sequences is \emph{regular}, if it satisfies the following conditions:
\begin{enumerate}
\item {\bf Convergence of weights}: There exists a distribution function $F : [0,\infty)\times [0,\infty) \rightarrow [0,1]$ such that for all  $(x,y)$ where $F$ is continuous, $\lim_{n\rightarrow \infty }F_n(x,y)=F(x,y)$.
\item {\bf Convergence of average weights:} Let $(W^-,W^+)$ be a random variable with distribution  $F$. Then $\lim_{n\rightarrow \infty }\E[(W^-_n,W^+_n)]=\E[(W^-, W^+)]=(\lambda^-,\lambda^+)$ for some $\lambda^-,\lambda^+ \in \mathbb{R}_+$.
\item {\bf Existence of a Lower Bound:} There is a $w_0 > 0$ such that $w_i^-(n), w_i^+(n) \ge w_0$ for all $n \in \mathbb{N}$ and $i \in [n]$.
\end{enumerate}
\end{definition}
\noindent 
Note that we do not require the convergence of higher moments for the vertex weights. Moreover, it can easily be seen that for a regular pair of weight sequences $\max_{i\in [n]} w^+_i=o(n)$ and $\max_{i\in [n]} w^-_i=o(n)$. We shall use this observation frequently. 

\paragraph{Bootstrap Percolation with Infection Thresholds} In addition to the weights, we assume that each vertex $i\in [n]$ is associated with an infection threshold $c_i$. The vertex $i$ becomes infected after $c_i$ of the vertices that have a directed edge to it are infected. We allow for vertices that can never be infected and assume that they have threshold $\infty$, a choice done for convenience. Set $\mathbb{N}^\infty :=\mathbb{N} \cup \{\infty\}$ and $\mathbb{N}_0^\infty :=\mathbb{N}_0 \cup \{\infty\}$. As in the case of the weights, we assume that we are given a threshold sequence ${\bf c} (n)=(c_1 (n),\dots ,c_n (n)) \in (\mathbb{N}_0^\infty)^n$. 
\begin{definition}[\bf Regular Vertex Sequence]
Let $({\bf w^-}(n), {\bf w^+}(n))_{n \ge 1}$ be a regular weight sequence and $({\bf c}(n))_{n \ge 1}$ a sequence of percolation thresholds. We call $({\bf w^-}(n), {\bf w^+}(n), {\bf c}(n)  )_{n \ge 1}$ a \emph{regular vertex sequence} if there exists a distribution function $F: \mathbb{R} \times \mathbb{R}  \times \mathbb{N}_0^\infty \rightarrow [0,1]$ such that for all points $(x,y,l) \in \mathbb{R} \times \mathbb{R}  \times \mathbb{N}_0^\infty$ for which $F(x,y,l)$ is continuous in $(x,y)$ we have $\lim_{n\rightarrow \infty} F_n(x,y,l)=F(x,y,l)$, where $F_n(x,y,l)$ is the empirical distribution function 
\begin{equation}
F_n(x,y,l)=n^{-1}\sum_{i \in [n]} \1 \{w^-_i(n) \leq x, w^+_i(n) \leq y, c_i(n)  \leq l \}, \;\; \forall (x,y,l) \in \mathbb{R} \times \mathbb{R}  \times \mathbb{N}_0^\infty.
\end{equation} 
\end{definition}
\noindent
Note that in contrast to Definition~\ref{weight:assump} of a regular weight sequence we do not pose any integrability assumptions on the threshold value.   We denote by $G_n ({\bf w}^-(n),{\bf w}^+(n), {\bf c}(n))$ the random graph $G_n ({\bf w}^-(n),{\bf w}^+(n))$ where the infection thresholds of the vertices are given by ${\bf c}(n)$, and we abbreviate it with $G_n ({\bf w}^-,{\bf w}^+, {\bf c})$. 

Given a directed graph $G$ and the threshold sequence, a bootstrap percolation process is triggered by the initial set of infected vertices $\mathcal{A}_0:= \{ i \in [n]  \mid c_i=0 \}$. Recall that for a vertex $N_G^-(i) = \{j \in [n]~|~ (j,i) \text{ is an edge of } G\}$ is the in-neighborhood of vertex $i$ in $G$. In the $k$-th generation, where $k\in \mathbb{N}$, the infection spreads to the set ${\cal A}_k$ given by
\begin{equation}
\label{eq:Sk}
	{\cal A}_k = \big\{i \in [n] \mid |N_G^-(i) \cap {\cal A}_{k-1}| \ge c_i\big\}.
\end{equation}
One can easily see that after at most $n-1$ rounds, the sequence $\mathcal{A}_0, \mathcal{A}_1, \mathcal{A}_2, \dots $ stabilizes and $\mathcal{A}_{n-1}=\mathcal{A}_n$. We call $\mathcal{A}_n$ the \emph{final set of infected vertices} in $G$.
 
\paragraph{Main result} Our main result quantifies the size of the final infected set for the random graph $G_n({\bf w^-},{\bf w^+}, {\bf c})$, where  $({\bf w^-}(n), {\bf w^+}(n), {\bf c}(n))_{n \ge 1}$ is regular. Let us introduce some notation first. For $r\in \mathbb{N}_0 \cup \{\infty\}$ let $\psi_r(x)$ denote in the rest of the paper the probability that a Poisson distributed random variable with parameter $x\geq 0$ is at least $r$, i.e.,
\begin{equation}
\label{eq:defpsi}
\psi_r(x):=\left\{ \begin{array}{ll} \P (\text{Poi}(x)\geq r)=\sum_{j\ge r} e^{-x} {x^j}/{j!} ,& r\geq 0\\
0 & r=\infty
	\end{array}.
\right.
\end{equation}
We say that a sequence of events $({\mathcal E}_n)_{n \ge 1}$ occurs \emph{with high probability} if
$\lim_{n \to \infty} \P(G_n({\bf w^-}, {\bf w^+}, {\bf c}) \in {\mathcal E}_n) = 1$. Finally, $ \xrightarrow{p}$ denotes the usual convergence in probability.
\begin{theorem}\label{thm:2}
Let $({\bf w^-}(n), {\bf w^+}(n), {\bf c}(n))_{n \ge 1}$ be a regular vertex sequence with limiting distribution $F: \mathbb{R} \times \mathbb{R}  \times \mathbb{N}^\infty_0 \rightarrow [0,1]$. Let $(W^-, W^+, C)$ be a random vector with distribution $F$. Assume $\P (C=0)>0$ and that $F(x,y,l)$ is continuous in $(x,y)$ for each $l\in \mathbb{N}$. Denote by $\hat{z}$ the smallest positive solution of 
\begin{equation}
\label{eq:fixpointallpars}
f(z;(W^-,W^+,C)):=\E [ W^+ \psi_C( W^- z ) ] -z = 0.
\end{equation}
Let ${\cal A}_n$ denote the final set of infected vertices in $G_n({\bf w^-},{\bf w^+}, {\bf c})$. Then:
\begin{enumerate}
\item For all $\epsilon>0$ with high probability $n^{-1} \abs{\mathcal{A}_n} \geq  \E [ \psi_C( W^- \hat{z} ) ]-\epsilon$.
\item \label{cond:2} If there exists $\delta > 0$ and a $\kappa < 1$ such that $\E [ W^+ W^- \P (\poi (z W^-)=C-1)\1_{C\geq 1} ] < \kappa$ for $z \in (\hat{z}-\delta, \hat{z}+\delta )$, then
\begin{equation}
n^{-1}\abs{\mathcal{A}_n} \xrightarrow{p} g(\hat{z};(W^{-},C)) :=\E [ \psi_C( W^- \hat{z} ) ], \text{ as } n\rightarrow \infty .\label{final:set:general}
\end{equation}
\end{enumerate}
\end{theorem}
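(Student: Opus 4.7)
The plan splits naturally into two stages: first prove the theorem when $(W^-,W^+,C)$ takes only finitely many values, then lift to the general case by coupling with finitary approximations.

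In the finitary case I would reformulate the parallel update~\eqref{eq:Sk} as a one-vertex-at-a-time sequential reveal. Maintain a queue of vertices known to be infected but whose outgoing edges have not yet been inspected, and at each step $k$ dequeue a vertex $v$, independently expose every potential edge $(v,j)$ with probability $p_{v,j}$, increment the ``infected in-neighbors'' counter at each successful target, and enqueue any target whose counter has just reached its threshold. Grouping vertices by type $(w^-,w^+,c)$ makes this a Markov chain in the populations $U_k(w^-,w^+,c)$ of uninfected type-$(w^-,w^+,c)$ vertices together with the cumulative out-weight $Z_k$ already consumed. In the scaling $t=k/n$ the conditional drifts of these coordinates match a closed system of Lipschitz ODEs on an appropriate open subdomain, and Wormald's method then delivers $Z_{\lfloor tn\rfloor}/n\to z(t)$ together with analogous fluid limits for the $U$-coordinates. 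The key mean-field identity is that when $Z_k/n\approx z$ a type-$(w^-,w^+,c)$ vertex has received $\ge c$ infected in-edges with asymptotic probability $\psi_c(w^- z)$; consequently the drift of $Z$ is precisely $f(z;(W^-,W^+,C))$, so the process halts at the smallest positive zero $\hat z$ and the final fraction of infected vertices is $\E[\psi_C(W^-\hat z)]$. I expect the hardest step here to be that the vector field is only locally Lipschitz on the physical region (vertices with large threshold cause unboundedness), which forces a separate probabilistic truncation argument showing that activations outside a good subdomain contribute only $o(n)$.

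The analytic meaning of the stability hypothesis in part~2 is transparent: since $\psi_c'(x)=\P(\poi(x)=c-1)\1_{c\geq 1}$, one computes $f'(z)=\E[W^+W^-\P(\poi(zW^-)=C-1)\1_{C\geq 1}]-1$, so the assumption forces $f'<\kappa-1<0$ in a neighborhood of $\hat z$, which is exactly the transversality needed for $Z_{\lfloor tn\rfloor}/n$ to cross $\hat z$ at a well-defined stopping time and for Wormald's concentration to yield convergence in probability. Part~1 by contrast requires only a one-sided bound: since $Z_k$ is nondecreasing in $k$ and the ODE trajectory enters any first zero from above, the process necessarily reaches $\hat z$ even when $f$ is merely tangent there, yielding $n^{-1}|\mathcal A_n|\ge \E[\psi_C(W^-\hat z)]-\epsilon$ with high probability regardless of stability.

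For the general case I would sandwich $(W^-,W^+,C)$ between two sequences of finitely-supported approximations obtained by discretizing the weights on a grid and truncating $C$ at level $M$, monotonically coupled so that raising thresholds or lowering weights can only shrink the infected set on the same sample space. Producing such couplings that are simultaneously monotone in all three coordinates and compatible with the fixed-point structure is the main conceptual difficulty, since $(W^-,W^+,C)$ is three-dimensional and the weights $W^\pm$ may be unbounded. Once the couplings are in place, continuity of $z\mapsto\E[W^+\psi_C(W^-z)]$ (both in $z$ and in the underlying law, via dominated convergence using the integrability of $W^+$ granted by regularity) shows that both the first zero $\hat z^M$ and the terminal fraction of the approximants converge to their targets; sandwiching then delivers part~1, and for part~2 the strict derivative condition transfers to the approximants for $M$ sufficiently large so that the sandwich becomes tight in probability.
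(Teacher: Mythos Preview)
Your plan matches the paper's approach closely: a sequential-exposure Markov chain analyzed via Wormald's differential-equation method in the finitary case, followed by a sandwich argument using monotone finitary couplings for the general case. Two points deserve sharpening. First, the Lipschitz obstruction in the finitary ODE is not caused by large thresholds (those are bounded by $c_{\max}$ there); it comes from the factor $\mu(\tau)/\nu(\tau)$ in the drift, which blows up as the queue size $\nu\to 0$, so Wormald only applies on a domain $\{\nu>\delta_2\}$ bounded away from termination. Second, and relatedly, transversality $f'(\hat z)<0$ is not what lets Wormald ``cross $\hat z$'' --- Wormald stops strictly before $\hat z$ regardless. Its role is in the separate tail argument you allude to: once the queue has nearly emptied at some $\tau_\epsilon$ with $z(\tau_\epsilon)$ close to $\hat z$, one must bound the residual cascade round by round, and the condition $\sum_{j,k,m}\gamma_{j,k;m}^{m-1}\tilde w_j^-\tilde w_k^+<1$ (equivalent to $f'(\hat z)<0$) is exactly what makes that branching-type bound contract geometrically. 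The paper runs this as an induction over generations $\mathcal W_l,\mathcal S_l$ of newly infected weak and strong vertices. With these two clarifications your outline is correct and complete.
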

Our main results provides a lower bound of essentially $g(\hat{z};(W^{-},C))$ for the fraction of vertices in the final infected set, and under the additional assumption that $\E [ W^+ W^- \P (\poi (z W^-)=C-1)\1_{C\geq 1} ]$ is less than one in a neighborhood of $\hat z$ we obtain the convergence in probability to that value. This extra assumption is needed, as otherwise the process might show a different behavior; such an effect was studied in~\cite{janson2012} for the case of Erdos-R\'enyi random graphs and infection thresholds that are the same for all vertices. We do not consider this case here.

\begin{remark}
In \cite{Cont2013b} a similar result was derived in the case where the underlying graph is generated with the configuration model and the additional assumption that the degree distribution has a bounded variance.  There, for convergence in probability it is required that the fixpoint of the relevant functional has negative derivative, i.e., that $f' (\hat{z})\leq \kappa $ for some $\kappa <0 $. If $\E [ W^+ W^- ]=\infty$ however, $f$ is not necessarily differentiable. Furthermore, it is shown in Appendix~\ref{deriv:prop:f} that if $f$ is differentiable and $f' (\hat{z})\leq \kappa $ this in fact implies that $\E [ W^+ W^- \P (\poi (z W^-)=C-1)\1_{C\geq 1} ]$ is bounded away from one in a neighborhood of $\hat z$. Thus, the formulation here is more general and covers more settings.
\end{remark}

\begin{remark}
In \cite{Amini2014} bootstrap percolation on Chung-Lu random graphs with fixed thresholds was studied, and the resulting formulas, in particular for the determination of the fixpoint~\eqref{eq:fixpointallpars} and the final fraction~\eqref{final:set:general}, have some similarity. In particular, the formulas in \cite{Amini2014} depend only on the (single) distribution of the weights and the fixed threshold, while in the present setting there naturally appears a dependency on all weight and threshold distributions. Our proof also shares some similarities with the one in \cite{Amini2014}, especially in the beginning, where we study the process on finitary weight sequences. However, the analysis of the general setting is much more involved and requires due to the multidimensionality of the parameters several new tools and ideas that are developed in Section~\ref{sec:approximation}.
\end{remark}

In the light of many applications, not only the number of finally infected vertices is of interest, but also related quantities. For example one might be interested in
\begin{equation*}
n^{-1}\sum_{i \in \mathcal{A}_n} \1 \{w^-_i(n), w^+_i(n) \geq w \},
\end{equation*}
the number of {\em large} (with weight $\ge w$) vertices that get infected. Alternatively, each vertex might have a certain {\em relevance} for the system, for example by providing some service to the system itself or to the outside. Let $r_i\geq 0$ be the relevance of vertex $i \in [n]$. If the relevance varies dramatically across vertices, the number of finally infected vertices might not be a good measure to consider. We should consider instead
\begin{equation}\label{rel:loss:rel}
(\sum_{i \in \mathcal{A}_n} r_i) \big/ (\sum_{i \in [n]} r_i),
\end{equation}
the fraction of the absolute relevance lost due to the percolation process. In \cite{Detering2015}, where a financial network is studied, the number $r_i$ comprises several properties that make a financial institution relevant to society, as for example the amount of debt issued to the real economy or its contribution to the infrastructure of the payment system. In Section~\ref{extension:relevance} we demonstrate how our results can be extended to that setting.

\paragraph{Applications}

In our main application we investigate under which conditions even a very small set of infected vertices can cause a large fraction of infected vertices at the end of the process. Let as before $({\bf w^-}, {\bf w^+}, \bf c  )$ be regular with limiting distribution $F$ and $(W^-,W^+,C)$ a random variable with distribution $F$.

Our model for studying the effect of very small initially infected sets is as follows. We assume that $\P (C=0)=0$, that is, (asymptotically) there are no initial infections. Moreover, we assume that some vertices $i \in [n]$ are being infected \emph{ex post}. In this process all vertices $i \in [n]$ receive a binary mark $m_i$, which is either $1$ or $0$, where $1$ means that the vertex keeps its initial infection threshold and $0$ that it becomes infected. Let ${\bf m}$ be the sequence of marks. We define the function $\bar{F}_n(x,y,l,m):\mathbb{R} \times \mathbb{R}  \times \mathbb{N}_0^\infty \times \{0,1\} \rightarrow [0,1]$ by 
\begin{eqnarray*}
\bar{F}_n(x,y,l,k) &= & n^{-1}\sum_{i \in [n]} \1 \{w^-_i(n) \leq x, w^+_i(n) \leq y, c_i(n)  \leq l , m_i(n)  \leq k \}
\end{eqnarray*}
and assume in the rest of this section  that $\lim_{n \rightarrow \infty} \bar{F}_n(x,y,l,m) =\bar{F}(x,y,l,m)$ for each $(x,y,l,m)$ and some distribution function $\bar{F}$. Let $(W^-,W^+,C,M)$ be a random vector distributed according to $\bar{F}$.

The following proposition investigates under which condition the fraction of infected vertices at the end of the process can be bounded away from 0 \emph{independently} of $M$.
\begin{theorem}\label{thm:epsilon:grow}
Assume that $(W^-,W^+,C)$ is such that there exists $z_0>0$ such that for any $0< z < z_0$
\begin{equation}\label{cond:small:sets}
\E [W^+ \psi_C (z W^-)] > z.
\end{equation}
Let $M$ be such that $\P (M = 0)>0$. Let $\mathcal{A}_n$ be the set of finally infected vertices in $G_n({\bf w}^-, {\bf w}^+, \overline{\bf c})$, where $\overline{c}_i(n) = c_i(n) m_i(n)$, for all $i \in \mathbb{N}$. Then with high probability
\begin{equation}\label{small:fraction:bound}
 n^{-1}\abs{\mathcal{A}_n }\geq  \E[\psi_C (W^- z_0 )] >0.
\end{equation}
\end{theorem}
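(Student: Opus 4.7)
The plan is to reduce the statement to Theorem~\ref{thm:2} applied to the modified vertex sequence $({\bf w}^-, {\bf w}^+, \overline{\bf c})$. First I would check that this sequence is regular, with limiting threshold distribution $CM$: splitting $\1\{c_i m_i \leq l\} = \1\{m_i = 0\} + \1\{m_i = 1, c_i \leq l\}$ and using the assumed pointwise convergence $\bar F_n \to \bar F$, the joint empirical distribution converges to
\[
G(x,y,l) = \P(W^- \leq x, W^+ \leq y, M = 0) + \P(W^- \leq x, W^+ \leq y, C \leq l, M = 1),
\]
which inherits continuity in $(x, y)$ from $\bar F$. Since $\P(CM = 0) \geq \P(M = 0) > 0$, part~(i) of Theorem~\ref{thm:2} yields, for every $\epsilon > 0$,
\[
n^{-1}|\mathcal{A}_n| \geq \E[\psi_{CM}(W^- \hat z)] - \epsilon
\]
with high probability, where $\hat z$ is the smallest positive zero of $f_M(z) := \E[W^+ \psi_{CM}(W^- z)] - z$.

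Next I would locate $\hat z$ relative to $z_0$. Since $\psi_0 \equiv 1$, splitting on $M \in \{0,1\}$ gives
\[
\E[W^+\psi_{CM}(W^-z)] = \E[W^+\1\{M=0\}] + \E[W^+\psi_C(W^-z)\1\{M=1\}] \geq \E[W^+\psi_C(W^-z)],
\]
because $\psi_C \leq 1$ and $W^+ \geq 0$. The hypothesis~\eqref{cond:small:sets} then implies $f_M(z) > 0$ for $z \in (0, z_0)$. Moreover $f_M(0) = \E[W^+\1\{M=0\}] \geq w_0\,\P(M=0) > 0$, while $f_M(z) \leq \lambda^+ - z$ becomes negative for large $z$ since $\lambda^+ = \E[W^+] < \infty$. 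As $f_M$ is continuous by dominated convergence, the smallest positive root exists and satisfies $\hat z \geq z_0$.

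The last step translates this into the claimed bound. Decomposing,
\[
\E[\psi_{CM}(W^- \hat z)] = \P(M = 0) + \E[\psi_C(W^- \hat z)\1\{M = 1\}] \geq \P(M = 0) + \E[\psi_C(W^- z_0)\1\{M = 1\}],
\]
by monotonicity of $\psi_C$ in its argument together with $\hat z \geq z_0$. Using the trivial bound $\E[\psi_C(W^- z_0)\1\{M = 0\}] \leq \P(M = 0)$ and rearranging gives $\E[\psi_{CM}(W^- \hat z)] \geq \E[\psi_C(W^- z_0)]$. Letting $\epsilon \to 0$ then produces~\eqref{small:fraction:bound}; strict positivity follows because~\eqref{cond:small:sets} forces $\P(C < \infty) > 0$ (otherwise $\E[W^+\psi_C(zW^-)]$ would vanish for every $z > 0$), and on $\{C < \infty\}$ we have $\psi_C(W^- z_0) > 0$ since $W^- \geq w_0 > 0$.

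The main obstacle is the fixpoint-location step: one has to identify the right comparison that makes $f_M$ stay strictly positive on $(0, z_0)$ without assuming any monotonicity in $M$ of the original functional. The observation that $\psi_0 \equiv 1$, so that the ``$M = 0$'' vertices contribute a non-negative summand that dominates the loss caused by replacing $\psi_C$ by $\psi_{CM}$ on $\{M = 1\}$, is what makes the reduction clean. Once this is in hand, the remainder is bookkeeping.
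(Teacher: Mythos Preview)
Your approach is essentially the same as the paper's: apply Theorem~\ref{thm:2} to the modified sequence, compare $\psi_{CM}$ with $\psi_C$ via $\psi_0\equiv 1$ to push the fixpoint past $z_0$, and then bound $g$. Your explicit verification of regularity of $({\bf w}^-,{\bf w}^+,\overline{\bf c})$ is in fact more careful than the paper, which takes it for granted.

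There is, however, a small logical slip at the end. From Theorem~\ref{thm:2} you only get, for each fixed $\epsilon>0$, that with high probability $n^{-1}|\mathcal{A}_n|\ge \E[\psi_{CM}(W^-\hat z)]-\epsilon$. ``Letting $\epsilon\to 0$'' does not yield the whp statement $n^{-1}|\mathcal{A}_n|\ge \E[\psi_C(W^-z_0)]$; a whp lower bound that holds for every $\epsilon>0$ with an $\epsilon$-slack is genuinely weaker than the slack-free bound (think of the deterministic example $X_n=c-1/n$). The paper handles this by proving the comparison \emph{strictly}, namely $\E[\psi_{CM}(W^-\hat z)]>\E[\psi_C(W^-z_0)]$, and then choosing $\epsilon$ equal to the positive gap. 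Your own computation already gives this: since $\P(C=0)=0$ (the standing assumption in this section), one has $\psi_C(W^-z_0)<1$ a.s., so
\[
\P(M=0)-\E[\psi_C(W^-z_0)\1\{M=0\}]=\E\big[(1-\psi_C(W^-z_0))\1\{M=0\}\big]>0,
\]
which makes your inequality $\E[\psi_{CM}(W^-\hat z)]\ge \E[\psi_C(W^-z_0)]$ strict. Replace ``letting $\epsilon\to 0$'' by this observation and the proof is complete.
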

Networks that fulfill the assumption of the last proposition are very prone to small initial infections, as $n^{-1}\abs{\mathcal{A}_n }$ is bounded away from zero regardless of  $\P (M = 0)$. We provide several examples. In particular, in Example~\ref{non:res:no:thres1} we show that~\eqref{cond:small:sets} often holds even if $\P (C=1)=0$, that is, when there are no \textit{weak} vertices with infection threshold equal to 1; the crucial property driving $\E [W^+ \psi_C (z W^-)]$ up is the non-existence of the second moment of the distribution of~$W^-$. This result complements our view on systemic risk  and provides a new global feature that enables us to study the vulnerability of networks.  In particular, in previous works~\cite{ar:gk10,Cont2013b} the same bootstrap percolation process was studied on random graphs that are generated according to the configuration model, where the sequence of degrees has a bounded second moment. There it was shown that an initially small infection can propagate to a big part of the graph \emph{if and only if} the subgraph induced by the so-called \emph{contagious} edges is large; an edge $(i,j)$ is contagious if $j$ is weak. Moreover, especially in the context of financial mathematics a key concept in studying the effect of initial defaults is precisely that of  contagious edges~\cite{book:sr,inc:anaa}. Our results enhance this picture; that is, our analysis reveals that in general, even if $c_i \neq 1$ for all $i\in[n]$ and $\P(C = 0) > 0$ arbitrarily small,  the initial shock can propagate to large parts of the system.

Theorem~\ref{thm:epsilon:grow} generalizes earlier research yet in another direction by not only showing the existence of a lower bound but explicitly determining it. Furthermore, we show in Section~\ref{sec:apps} that the bound is best possible and can not be improved in general. In contrast to earlier proofs of similar results using combinatorial methods, our proof of Theorem~\ref{thm:epsilon:grow} is purely analytic and provides additional insight into the role the functional $f$ plays in determining the spread of infection.

We continue by studying (essentially) all remaining cases and showing a reverse answer to Theorem~\ref{thm:epsilon:grow}, describing the situation when the network is resilient. Since the conditions looks at first sight different, see the discussion after the next theorem for the connection to Theorem~\ref{thm:epsilon:grow}.
\begin{theorem}\label{prop:resilient}
Assume that $(W^-,W^+,C)$ is such that there exists $z_0>0$ such that for any $0< z < z_0$
\begin{equation}\label{resilience}
\E [ W^+ W^- \P (\poi (z W^-)=C-1)\1_{\{C\geq 1 \}} ] <  1.
\end{equation}
Let $\{M^{(j)} \}_{j \in \mathbb{N}}$ with $\lim_{j \rightarrow \infty}\P(M^{(j)}=0)=0$ be a sequence of ex post infections, and let ${\cal A}_{n}^{(j)}$ be the set of finally infected vertices in $G_n({\bf w}^-, {\bf w}^+, {\bf c}^{(j)})$, where $c^{(j)}_i(n) = c_i(n) m_i^{(j)}(n)$, for all $i \in [n]$ and $j \in \mathbb{N}$. Then, for any $\varepsilon >0$, there exists $j_{\varepsilon}$ such that for $j \geq j_{\varepsilon}$ with high probability
\begin{equation}\label{resulzt:resilient}
n^{-1}\abs{\mathcal{A}_{n}^{(j)} } \leq \varepsilon.
\end{equation}
\end{theorem}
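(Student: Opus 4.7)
The plan is to apply Theorem~\ref{thm:2} to each modified vertex sequence $(W^-, W^+, \overline{C}^{(j)})$ with $\overline{C}^{(j)} := C M^{(j)}$, and to show that both the resulting fixpoint $\hat{z}_j$ and the limiting fraction $g_j := \E[\psi_{\overline{C}^{(j)}}(W^- \hat{z}_j)]$ tend to $0$ as $j \to \infty$. Since $\P(C=0) = 0$, we have $\overline{C}^{(j)} = 0$ if and only if $M^{(j)}=0$, so the associated fixpoint function splits as
\begin{equation*}
f_j(z) := \E[W^+\psi_{\overline{C}^{(j)}}(W^- z)] - z = \beta_j + \E[W^+\1_{\{M^{(j)}=1\}}\psi_C(W^-z)] - z,
\end{equation*}
where $\beta_j := \E[W^+\1_{\{M^{(j)}=0\}}]$. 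Writing $f(z) := \E[W^+\psi_C(W^-z)] - z$ as in Theorem~\ref{thm:2}, the inequality $0 \le 1-\psi_C(W^-z) \le 1$ yields $0 \le f_j(z)-f(z) \le \beta_j$ uniformly in $z \ge 0$. Integrability of $W^+$ combined with $\P(M^{(j)}=0) \to 0$ gives $\beta_j \to 0$ via a routine truncation argument.

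Next I would establish that $f < 0$ on $(0, z_0)$. The identity $\psi_r'(x) = \P(\poi(x) = r-1)$ for $r\geq 1$, together with $\psi_C(0)\1_{\{C\geq 1\}} = 0$, combined with Fubini gives
\begin{equation*}
f(z) = \int_0^z\bigl(h(u)-1\bigr)\,\dd u, \qquad h(u) := \E\bigl[W^+ W^- \P(\poi(uW^-) = C-1)\1_{\{C\geq 1\}}\bigr].
\end{equation*}
Hypothesis \eqref{resilience} makes the integrand strictly negative on $(0, z_0)$, so $f$ is continuous with $f(0) = 0$ and $f(z) < 0$ throughout $(0, z_0)$. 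Fix any $\delta\in(0,z_0)$; for $j$ large enough that $\beta_j < -f(\delta)$ we have $f_j(\delta) \le f(\delta)+\beta_j < 0$ while $f_j(0) = \beta_j > 0$. The intermediate value theorem then produces a root of $f_j$ in $(0,\delta)$, and hence the smallest positive zero $\hat{z}_j$ of $f_j$ satisfies $\hat{z}_j < \delta$. Since $\delta$ was arbitrary, $\hat{z}_j \to 0$ as $j \to \infty$.

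To invoke part~\ref{cond:2} of Theorem~\ref{thm:2}, conditioning on $M^{(j)}$ one sees
\begin{equation*}
\E[W^+W^-\P(\poi(zW^-)=\overline{C}^{(j)}-1)\1_{\{\overline{C}^{(j)}\geq 1\}}] \leq h(z),
\end{equation*}
and continuity of $h$ on $(0,z_0)$, combined with $h(\hat z_j)<1$ from \eqref{resilience}, yields $\kappa<1$ and $\delta'>0$ such that the hypothesis of part~\ref{cond:2} holds on $(\hat z_j-\delta',\hat z_j+\delta')$. Theorem~\ref{thm:2} then delivers $n^{-1}|\mathcal{A}_n^{(j)}| \xrightarrow{p} g_j = \P(M^{(j)}=0) + \E[\1_{\{M^{(j)}=1\}}\psi_C(W^- \hat{z}_j)]$ as $n\to\infty$. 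The first term vanishes by hypothesis; the second tends to $0$ by dominated convergence, since $\hat{z}_j \to 0$ and $\psi_C(0)\1_{\{C\geq 1\}} = 0$ a.s. Hence $g_j \to 0$, and for given $\varepsilon > 0$ one chooses $j_\varepsilon$ so that $g_j < \varepsilon/2$ for all $j \geq j_\varepsilon$; convergence in probability then yields \eqref{resulzt:resilient}.

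The main obstacle I anticipate is the verification of the hypothesis of part~\ref{cond:2} in a genuine neighborhood of $\hat{z}_j$: continuity of $h$ on $(0,z_0)$ is not automatic and requires a careful dominated-convergence argument that exploits the rapid decay of the Poisson point masses together with the finiteness of $h$ guaranteed by \eqref{resilience}. A secondary technical point is the truncation argument for $\beta_j \to 0$, which is needed because the joint law of $(W^+, M^{(j)})$ is controlled only through the marginal assumption $\P(M^{(j)}=0) \to 0$.
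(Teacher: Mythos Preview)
Your proof is correct and follows essentially the same route as the paper's: bound $f_j - f$ uniformly by $\beta_j \to 0$, use $f(\delta) < 0$ on $(0,z_0)$ (derived in the paper from \eqref{resilience} via the same Fubini/weak-derivative argument you give) to force $\hat{z}_j < \delta$, dominate the derivative term for $\overline{C}^{(j)}$ by that for $C$, and invoke Theorem~\ref{thm:2}. The obstacle you flag about continuity of $h$ (to extract a uniform $\kappa < 1$ on a neighborhood of $\hat{z}_j$, as required by Condition~\ref{cond:2}) is not addressed in the paper's proof either; it simply records $h(z) < 1$ on $(0,z_0) \ni \hat z_j$ and applies Theorem~\ref{thm:2} directly.
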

To see why this complements Theorem~\ref{thm:epsilon:grow}, note that $W^+ W^- \P (\poi (z W^-)=C-1)\1_{\{C\geq 1 \}}$ is non-negative. Thus, by Fubini's theorem $\E [ W^+ W^- \P (\poi (z W^-)=C-1)\1_{\{C\geq 1 \}} ]$ is the weak derivative of $\E [W^+ \psi_C (z W^-)]$, and Assumption (\ref{resilience}) implies that 
\begin{equation}\label{cond:small:f}
\E [W^+ \psi_C (z W^-)] < z
\end{equation} 
for any $0< z < z_0$, which is exactly the complement of~\eqref{cond:small:sets}.

A network for which the assumption of the theorem holds can be considered as being resilient to small infections, since the final fraction of infected vertices will still be small. In Example~\ref{example:1} we describe a family of graphs that is resilient. Condition (\ref{resilience}) is satisfied by Lemma~\ref{diff:smaller:1} if $f$ is differentiable on $(0,z_0)$ for some $z_0 > 0$ and $f'(z,(W^-,W^+,C))<0$ for $z \in (0,z_0)$.

\section{Degree Distribution in Directed Inhomogeneous Random Graphs}\label{sec:model}

In this section we study some basic properties of directed inhomogeneous random graphs, as the in- and out degrees of a single vertex and the degree sequence. Suppose that $({\bf w}^-(n), {\bf w}^+(n))_{n \ge 1}$ is a regular weight sequence with limiting distribution function $F$, and consider the sequence of graphs $G_n({\bf w}^-, {\bf w}^+)$. Denote by $X_{i,j}=X_{i,j}(n)$ the indicator function that there is a directed edge from vertex $i$ to vertex $j$. Furthermore, define the in-degree $D^-_i$ and out-degree $D^+_i$ of vertex $i \in [n]$ by
\begin{equation*}
D^-_i= \sum_{j\neq i} X_{j,i}  \quad \text{ and } \quad  D^+_i= \sum_{j\neq i} X_{i,j}.
\end{equation*}
We first compute the typical number of edges in $G_n({\bf w}^-, {\bf w}^+)$. In the following let $(W^-, W^+)$ be a random vector with distribution $F$, and set $\E[W^-] = \lambda^-$ and $\E[W^+] = \lambda^+$.
\begin{lemma}\label{nb:edges}
Denote by $e (G_n ({\bf w}^-,{\bf w}^+))$ the number of edges in $G_n ({\bf w}^-,{\bf w}^+)$. Then
\begin{equation}
{n}^{-1} e (G_n ({\bf w}^-,{\bf w}^+)) \xrightarrow{p} \lambda^- \lambda^+.
\end{equation}
\end{lemma}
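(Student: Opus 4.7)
The plan is to compute the mean and variance of $e(G_n({\bf w}^-,{\bf w}^+)) = \sum_{i \neq j} X_{i,j}$ and apply Chebyshev's inequality. Since the $X_{i,j}$ are independent Bernoulli variables with parameter $p_{i,j} = \min\{1, w_i^+ w_j^-/n\}$, this reduces to showing that $n^{-1} \E[e(G_n)] \to \lambda^- \lambda^+$ and that the variance is $o(n^2)$. The latter is essentially automatic: because the $X_{i,j}$ are independent Bernoullis, $\V[e(G_n)] = \sum_{i \neq j} p_{i,j}(1-p_{i,j}) \le \E[e(G_n)] = O(n)$, so $\V[e(G_n)]/n^2 = O(1/n) \to 0$.

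\textbf{Mean computation, upper bound.} Using $p_{i,j} \le w_i^+ w_j^-/n$ one gets
\begin{equation*}
n^{-1}\E[e(G_n)] \;\le\; \frac{1}{n^2}\Bigl(\sum_i w_i^+\Bigr)\Bigl(\sum_j w_j^-\Bigr) \;=\; \E[W_n^+]\,\E[W_n^-] \;\longrightarrow\; \lambda^+\lambda^-
\end{equation*}
by the convergence of average weights. This already gives the ``easy'' direction.

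\textbf{Mean computation, lower bound.} For a continuity point $K>0$ of the marginals of $F$, restrict the sum to index pairs with $w_i^+ \le K$ and $w_j^- \le K$. On this set, once $n \ge K^2$ the truncation in $p_{i,j}$ is inactive, so $p_{i,j} = w_i^+w_j^-/n$ exactly. Hence
\begin{equation*}
n^{-1}\E[e(G_n)] \;\ge\; \Bigl(n^{-1}\!\!\sum_{w_i^+\le K} w_i^+\Bigr)\Bigl(n^{-1}\!\!\sum_{w_j^-\le K} w_j^-\Bigr) - n^{-2}\!\!\sum_{i} w_i^+ w_i^-\,\1\{w_i^+,w_i^-\le K\}.
\end{equation*}
The diagonal correction is $O(K^2/n) = o(1)$. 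By Portmanteau applied to $F_n \Rightarrow F$ and the fact that convergence of the means together with weak convergence implies uniform integrability of $(W_n^\pm)$, the first term converges to $\E[W^+\1\{W^+\le K\}]\cdot\E[W^-\1\{W^-\le K\}]$. Letting $K\to\infty$ along continuity points and invoking uniform integrability once more yields the lower bound $\lambda^+\lambda^-$.

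\textbf{Obstacle.} The main technical point is handling the truncation $\min\{1,\cdot\}$ without assuming that $W^+W^-$ is integrable: one cannot simply replace $p_{i,j}$ by $w_i^+w_j^-/n$ globally. The cleanest way around this is the two-sided squeeze above, where uniform integrability of the marginals (a consequence of the convergence of the means in Definition~\ref{weight:assump}) makes the contribution of ``large-weight'' pairs negligible uniformly in $n$. Combining the mean and variance estimates, Chebyshev's inequality gives $n^{-1}e(G_n({\bf w}^-,{\bf w}^+)) \xrightarrow{p} \lambda^-\lambda^+$, as desired.
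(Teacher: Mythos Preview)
Your proof is correct and follows essentially the same approach as the paper: bound $\E[e(G_n)]$ from above by dropping the $\min\{1,\cdot\}$, bound it from below by restricting to pairs with small weights where the truncation is inactive, and conclude via $\V[e(G_n)]\le\E[e(G_n)]$ and Chebyshev. The only cosmetic difference is in the lower bound: the paper truncates at the $n$-dependent level $\sqrt{n}$ (so that $w_i^+w_j^-\le n$ automatically and the tail sum $n^{-1}\sum_{w_j^->\sqrt n}w_j^-\to 0$ follows directly from the regularity assumptions), whereas you truncate at a fixed level $K$ and send $K\to\infty$ afterwards, invoking uniform integrability of the marginals explicitly. Both arguments rely on the same underlying fact (weak convergence plus convergence of first moments gives uniform integrability), so this is a matter of taste rather than a genuinely different route.
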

\begin{proof}
We first calculate $\lim_{n\rightarrow \infty}\E [e (G_n ({\bf w}^-,{\bf w}^+))]$. Since 
$\E [X_{i,j}]=p_{i,j}$ 
\begin{equation}\label{expect:edges:1}
\E [e (G_n ({\bf w}^-,{\bf w}^+))] \leq \sum_{\substack{ i,j \in [n], i\neq j }} \frac{w_i^+ w_j^-}{n} \leq \sum_{i\in[n]} w_i^+ \sum_{j \in [n]} \frac{w_j^-}{n}.
\end{equation}
By Definition~\ref{weight:assump}, $n^{-1}\sum_{j \in [n]} {w_j^-}=\lambda^- + o(1)$ and $n^{-1}\sum_{j \in [n]} {w_j^+}=\lambda^+ + o(1)$. This implies that the right hand side of (\ref{expect:edges:1}) equals $n(\lambda^+\lambda^- + o(1))$. 
To derive a lower bound for $\E [e (G_n ({\bf w}^-,{\bf w}^+))]$, note that in order to have $w^+_i w^-_j > n$ and the minimization to $1$ in (\ref{conn:prob}) to be relevant, at least one of the two factors $w^+_i$ and $w^-_j$ has to be greater than $\sqrt{n}$. So, 
\begin{equation}\label{expect:edges:2}
\E [e (G_n ({\bf w}^-,{\bf w}^+))]\geq  n^{-1}\sum_{\substack{ i\in[n],  w_i^+ \leq \sqrt{n}}} w_i^+ \sum_{\substack{ j \in [n]\setminus \{i\}, w_i^- \leq \sqrt{n} }} {w_j^-}.
\end{equation}
By Conditions 1.\ and 2.\ of Definition~\ref{weight:assump} it follows readily that 
\begin{equation*}
\lim_{n \rightarrow \infty } n^{-1}\sum_{ w_j^- > \sqrt{n} } {w_j^-}=\lim_{n \rightarrow \infty }\E [W_n^- \1_{W_n^-> \sqrt{n} }]=0.
\end{equation*}
This, together with the same argument for the sum involving the $w_i^+$ and the fact that $\max_{i\in [n]} w^-_i=o(n)$ shows that the right hand side of (\ref{expect:edges:2}) equals $n(\lambda^+\lambda^- + o(1))$ and therefore
 \begin{equation*}
 \lim_{n\rightarrow \infty}n^{-1}{\E [e (G_n ({\bf w}^-,{\bf w}^+))]}=\lambda^+\lambda^-.
 \end{equation*} 
Since $e (G_n ({\bf w}^-,{\bf w}^+))$ is the sum of independent indicator functions, it follows that $$\var (e (G_n ({\bf w}^-,{\bf w}^+)))\leq \E[e (G_n ({\bf w}^-,{\bf w}^+))]$$ and applying the second moment method establishes the claim.
\end{proof}
Before stating the next theorem we need the following definition (see \cite{grandell1997mixed} for a treatment of uni-variate mixed Poisson distributions).
\begin{definition}{\bf Multivariate mixed Poisson distribution:}
A vector $X=(X_1,\dots,X_n)$ of random variables has a \emph{mixed Poisson distribution} with mixing distribution $F_Y$, if for every $k=(k_1,...,k_n)\in \mathbb{N}_0^n$,
\begin{equation}
\P(X=k)=\E\left[\prod_{1\leq i \leq n} e^{-Y_i} \frac{Y_i^{k_i}}{k_i !}\right],
\end{equation}
where $Y=(Y_1,\dots,Y_n)$ is a random vector with distribution function $F_Y$. 
\end{definition}
In the following we denote by $\poi(Y)$ a random vector having a mixed Poisson distribution with mixing vector $Y$. It can be easily seen that $\E[\poi (Y_i)]=\E[Y_i]$ and $\cov (\poi (Y))=\cov (Y) + \diag (\E[Y_1],\dots, \E[Y_n])$, where $\diag (a_1,\dots, a_n)$ denotes the matrix with entries $a_1,\dots, a_n$ on the diagonal and zero elsewhere.
The following Theorem~\ref{sec2:corr:1} and Lemma~\ref{sec2:degree:seq} are directed versions of known results for undirected inhomogeneous random graphs (see \cite[Thm. 6.7., Cor. 6.9]{Hofstad2014} or \cite[Thm. 3.13]{Bollobas2007}). Let $P_n (k,j)$ be the random distribution function defined by
\begin{eqnarray}
P_n (k,j) &=&n^{-1} \sum_{i\in [n]} \1_{\{D^-_i=k, D^+_i=j \}},\;\;\forall k,j \in \mathbb{N}_0 .
\end{eqnarray}
\begin{theorem}\label{sec2:corr:1}
Let $p_n (k,j)$ be the probability mass function of the mixed Poisson random variable $(\text{Poi}(W^- \lambda^+),\text{Poi}(W^+\lambda^-))$ given by 
\begin{equation}
p(k,j)=\E \left[ e^{-(W^- \lambda^+ + W^+ \lambda^-)}  \frac{(W^- \lambda^+)^k (W^+ \lambda^-)^j }{k! j!}\right].
\end{equation}
Then for all $\epsilon>0$, as $n\rightarrow \infty$
\begin{equation}
\P \left( \sum_{k,j} \abs{p(k,j) -P_n (k,j)}>\epsilon  \right)\rightarrow 0.
\end{equation}
\end{theorem}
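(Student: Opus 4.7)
The plan is to reduce the claim to two ingredients: (A) pointwise convergence in probability $P_n(k,j) \xrightarrow{p} p(k,j)$ for each fixed $(k,j)$, and (B) a uniform tail bound controlling $\sum_{k+j>K}P_n(k,j)$. The structural observation driving the argument is that for every vertex $i$ the indicators $(X_{j,i})_{j\neq i}$ (contributing to $D_i^-$) and $(X_{i,j})_{j\neq i}$ (contributing to $D_i^+$) involve entirely disjoint edges, so $D_i^-$ and $D_i^+$ are \emph{independent} and each is a sum of independent Bernoullis. This reduces the task to a vertex-by-vertex Poisson approximation, followed by averaging against the empirical weight distribution.

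For step (A), I would start with the expectation: by the independence just noted,
\[
\E[P_n(k,j)] = n^{-1}\sum_{i\in[n]} \P(D_i^-=k)\,\P(D_i^+=j).
\]
LeCam's inequality yields $|\P(D_i^-=k) - e^{-\mu_i^-}(\mu_i^-)^k/k!| \leq \mu_i^-\max_{j\neq i} p_{j,i}$ with $\mu_i^- := \sum_{j\neq i} p_{j,i}$, and analogously for $D_i^+$ with $\mu_i^+ := \sum_{j\neq i} p_{i,j}$. To handle possibly unbounded weights I would truncate at a level $M$. On $\{w_i^-\vee w_i^+ \leq M\}$ one has $\max_j p_{j,i} \leq M\max_j w_j^+/n = o(1)$ (since $\max_j w_j^+ = o(n)$ by regularity) and $\mu_i^- = w_i^-\lambda^+ + o(1)$ uniformly (using $\E[W_n^+\1\{W_n^+>n/M\}]\to 0$, guaranteed by convergence of means in Definition~\ref{weight:assump}), so the bounded portion of the sum converges by weak convergence to
\[
\E\bigl[e^{-(W^-\lambda^+ + W^+\lambda^-)}(W^-\lambda^+)^k(W^+\lambda^-)^j/(k!\,j!)\,\1\{W^-\vee W^+\leq M\}\bigr].
\]
The complementary portion is at most $n^{-1}|\{i:w_i^-\vee w_i^+ > M\}|\to \P(W^-\vee W^+>M)$, which vanishes as $M\to\infty$, and the corresponding tail of $p(k,j)$ is likewise small. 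For concentration I would show $\var(P_n(k,j))=O(1/n)$: for $i_1\neq i_2$ the two vertex indicators share dependence only through the two edges $X_{i_1,i_2}$ and $X_{i_2,i_1}$, so their covariance is $O(p_{i_1,i_2}+p_{i_2,i_1})$, and averaging over pairs gives the claim via Chebyshev.

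For step (B), by Lemma~\ref{nb:edges},
\[
\sum_{k+j>K} P_n(k,j) \leq K^{-1}\,n^{-1}\sum_{i\in[n]}(D_i^-+D_i^+) = 2K^{-1}n^{-1}e(G_n({\bf w}^-,{\bf w}^+)) \xrightarrow{p} 2K^{-1}\lambda^-\lambda^+,
\]
and an identical computation gives $\sum_{k+j>K}p(k,j) \leq 2K^{-1}\lambda^-\lambda^+$. Given $\varepsilon>0$, I would fix $K$ so that both tails are below $\varepsilon/3$ with high probability, and conclude by applying step (A) to the finite sum $\sum_{k+j\leq K}|P_n(k,j) - p(k,j)|$, which is a sum of finitely many terms each tending to zero in probability. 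The main obstacle is that the weights $w_i^\pm$ may be unbounded as $n\to\infty$, which precludes a single global application of the Poisson approximation; the truncation-at-$M$ argument above, leaning on the integrability of $(W^-,W^+)$ built into the regularity assumption, is the key workaround.
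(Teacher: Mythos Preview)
Your argument is correct and follows essentially the same route as the proofs the paper cites (Bollob\'as--Janson--Riordan, Thm.~3.13; van der Hofstad, Thm.~6.10): Poisson approximation for each vertex's degree, truncation of large weights to cope with the lack of a second-moment assumption, a second-moment bound for concentration, and a tail cut via the total edge count. The paper itself gives no self-contained proof, merely pointing to those references together with Lemma~\ref{sec2:degree:seq}.

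The one organisational difference is worth recording. The paper packages the Poisson approximation into the coupling Lemma~\ref{sec2:degree:seq}, which bounds $\P(D_k^\pm\neq Z_k^\pm)$ by $((w_k^\pm)^2+w_k^\pm)o(1)$ and then feeds this into the cited proofs. You instead exploit directly that in a \emph{directed} graph the edge families $(X_{j,i})_{j\neq i}$ and $(X_{i,j})_{j\neq i}$ are disjoint, so $D_i^-\independent D_i^+$, allowing you to factor $\P(D_i^-=k,D_i^+=j)$ and apply LeCam to each marginal separately. This is a genuine simplification over the undirected setting the references treat, where in- and out-degree coincide and no such factorisation is available; it makes your proof slightly more transparent here, while the coupling formulation of Lemma~\ref{sec2:degree:seq} has the advantage of being reusable elsewhere in the paper. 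Both need the same truncation step, since the quadratic error term in either LeCam or the coupling lemma is not a~priori summable without a second-moment bound on the weights.
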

\begin{proof}
Using Lemma~\ref{sec2:degree:seq} below, the proof in \cite[Thm.\ 3.13]{Bollobas2007} or \cite[Thm.\ 6.10.]{Hofstad2014} can be applied with some minor changes reflecting the in- and out weights and the minimization with respect to $1$ in (\ref{conn:prob}). 
\end{proof}

\begin{lemma}\label{sec2:degree:seq}
Let $k\in [n]$. There exists a coupling $(D^-_k,Z^-_k)$ and $(D^+_k,Z^+_k)$ of $D^-_k$ and $D^+_k$ respectively, where $Z^-_k$ and $Z^+_k$ are Poisson random variables with parameters $w^-_k \lambda^+$ and $w^+_k \lambda^-$ such that
\begin{eqnarray}
\P(D^-_k\neq Z^-_k) &\leq & \left( (w_k^-)^2+ w_k^- \right) o(1)\label{sec2:in:weight:near}\\
\P(D^+_k\neq Z^+_k) &\leq & \left( (w_k^+)^2+ w_k^+ \right) o(1)\label{sec2:out:weight:near}.
\end{eqnarray}
\end{lemma}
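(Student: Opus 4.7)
The plan is to establish the bound for $D^-_k$ first; the argument for $D^+_k$ follows by perfect symmetry (swap the roles of $w^-$ and $w^+$ throughout). The coupling will be built in two stages: Bernoulli-to-Poisson via Le Cam, and Poisson-to-Poisson by adjusting the intensity.

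First I would note that by the construction of $G_n(\vec{w}^-,\vec{w}^+)$, the indicators $(X_{j,k})_{j \neq k}$ are independent with $X_{j,k} \sim \text{Ber}(p_{j,k})$, so $D^-_k$ is a sum of independent Bernoullis. Setting
\begin{equation*}
\mu_k^- := \sum_{j \neq k} p_{j,k},
\end{equation*}
Le Cam's inequality (or equivalently the Stein--Chen coupling) produces a $\tilde{Z}^-_k \sim \poi(\mu_k^-)$ coupled to $D^-_k$ with
\begin{equation*}
\P\bigl(D^-_k \neq \tilde{Z}^-_k\bigr) \leq \sum_{j \neq k} p_{j,k}^2.
\end{equation*}
Since $p_{j,k} \leq w_k^- w_j^+/n$, we have $\sum_j p_{j,k}^2 \leq (\max_j p_{j,k}) \cdot \mu_k^-$, and using $\max_j w_j^+ = o(n)$ together with $\mu_k^- \leq w_k^- \cdot n^{-1}\sum_j w_j^+ = w_k^-(\lambda^+ + o(1))$ gives
\begin{equation*}
\textstyle\sum_j p_{j,k}^2 \leq w_k^- \cdot \frac{\max_j w_j^+}{n} \cdot w_k^-(\lambda^+ + o(1)) = (w_k^-)^2 \cdot o(1).
\end{equation*}

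Second, I would couple $\tilde{Z}^-_k \sim \poi(\mu_k^-)$ with $Z^-_k \sim \poi(w_k^- \lambda^+)$ using the standard additive Poisson coupling: if $\mu_k^- \leq w_k^- \lambda^+$, take independent $\tilde{Z}^-_k$ and $Y \sim \poi(w_k^-\lambda^+ - \mu_k^-)$ and set $Z^-_k := \tilde Z^-_k + Y$ (symmetric construction otherwise), yielding
\begin{equation*}
\P(\tilde{Z}^-_k \neq Z^-_k) \leq |\mu_k^- - w_k^- \lambda^+|.
\end{equation*}
To control $|\mu_k^- - w_k^- \lambda^+|$, the upper bound is immediate from $\mu_k^- \leq w_k^- (\lambda^+ + o(1))$. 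For the lower bound, choose a cutoff $T_n := n / \max_i w_i^-$; regularity gives $T_n \to \infty$. For every $j$ with $w_j^+ \leq T_n$ we have $w_j^+ w_k^-/n \leq 1$ so the minimum in \eqref{conn:prob} is inactive, and
\begin{equation*}
\mu_k^- \;\geq\; \frac{w_k^-}{n}\sum_{j: w_j^+ \leq T_n}\!\! w_j^+ \;-\; \frac{w_k^- w_k^+}{n} \;=\; w_k^-\bigl(\lambda^+ - \E[W_n^+ \1_{W_n^+ > T_n}]\bigr) - o(w_k^-),
\end{equation*}
where the last term uses $w_k^+ = o(n)$. Uniform integrability of $(W_n^+)_n$, which follows from convergence in distribution together with convergence of means, yields $\E[W_n^+ \1_{W_n^+ > T_n}] \to 0$; thus $|\mu_k^- - w_k^- \lambda^+| = w_k^- \cdot o(1)$ uniformly in $k$.

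Combining the two couplings through the triangle inequality delivers
\begin{equation*}
\P(D^-_k \neq Z^-_k) \leq \textstyle\sum_j p_{j,k}^2 + |\mu_k^- - w_k^- \lambda^+| \leq ((w_k^-)^2 + w_k^-)\, o(1),
\end{equation*}
which is \eqref{sec2:in:weight:near}. The proof of \eqref{sec2:out:weight:near} is identical after interchanging the roles of the in- and out-weight sequences. The main subtlety to watch is the uniformity in $k$ of the $o(1)$ term in the intensity comparison: this is exactly why I pick the deterministic cutoff $T_n = n/\max_i w_i^-$ rather than something depending on the individual vertex $k$, so that a single uniform-integrability estimate handles all $k$ simultaneously.
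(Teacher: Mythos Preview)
Your proof is correct and uses the same two core ingredients as the paper (Le Cam for Bernoulli-to-Poisson, then additive Poisson coupling to adjust the intensity), but your organization is slightly more direct. The paper sandwiches $D^-_k$ between two auxiliary Bernoulli sums $\underline{D}^-_k$ (restricted to indices with $w_k^- w_j^+ \le n$) and $\overline{D}^-_k$ (unrestricted), applies Le Cam to each, and then does two Poisson-to-Poisson couplings to land on $Z^-_k$. You instead apply Le Cam once directly to $D^-_k$ and absorb the effect of the truncation $\min\{1,\cdot\}$ entirely into the intensity estimate $|\mu_k^- - w_k^-\lambda^+|$, via your cutoff $T_n = n/\max_i w_i^-$ and the uniform-integrability argument. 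Both routes yield the same $(w_k^-)^2 o(1)$ and $w_k^- o(1)$ terms; your version is a bit leaner (one Le Cam step instead of two), while the paper's sandwich makes the handling of the $\min$ more explicit at the cost of extra intermediate objects.
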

\begin{proof}
We provide the proof for $D^-_k$ only, as the argument for $D^+_k$ is similar. Define the random variables $\overline{D}^-_k$ and $\underline{D}^-_k$ by 
\begin{equation}
\underline{D}^-_k:=\sum_{\substack{  j\in [n]\setminus \{ k \}, w_k^- w^+_j \leq n}} Be \left( \frac{w_k^- w_j^+}{n} \right) \quad \text{and}\quad  \overline{D}^-_k:=\sum_{j\in [n]} Be \left( \frac{w_k^- w_j^+}{n} \right),
\end{equation}
where $Be(x)$ denote a Bernulli distributed random variable with mean $x$. Then $\underline{D}^-_k  \preceq D^-_k  \preceq \overline{D}^-_k$, where $\preceq$ denotes stochastic ordering.
Consider Poisson random variables $\underline{V}^-_k$ and $\overline{V}^-_k$ with parameters
\begin{equation}
n^{-1}\sum_{\substack{  j\in [n] \setminus \{ k \}, w_k^- w^+_j \leq n}} {w_k^- w_j^+}  \quad\text{and}\quad  n^{-1}\sum_{j\in [n]}  {w_k^- w_j^+}
\end{equation}
respectively. Recall that $\max_{i\in [n]} w^+_i =o(n)$. Then, by Definition~\ref{weight:assump}
\begin{eqnarray}
\P(\underline{D}^-_k \neq \underline{V}^-_k)&\leq &\sum_{j \in [n]} \frac{ (w^+_j)^2 (w^-_k)^2}{n^2} \leq  (w^-_k)^2 \max_{i\in [n]} w^+_i \sum_{j\in [n]} \frac{ (w^+_j)}{n^2} = (w^-_k)^2  o(1).\label{sec:2:eqn1}
\end{eqnarray}
The same estimate holds with $\underline{D}^-_k$ and $\underline{V}^-_k$ replaced by $\overline{D}^-_k$ and $\overline{V}^-_k$. To complete the proof we will couple $\overline{V}^-_k$ and $\underline{V}^-_k$ to a Poisson random variable $Z_k^-$ with parameter $w^-_k \lambda^+$.   Define $\overline{\eta}^-_k$ and $\underline{\eta}^-_k$ by
\begin{equation}
\overline{\eta}^-_k:=w^-_k \lambda^+ - \sum_{j\in [n]}  \frac{w_k^- w_j^+}{n}      \;\;\;\text{ and }\;\;\;\underline{\eta}^-_k:=\sum_{\substack{  j\in [n] \setminus \{ k \}, w_k^- w^+_j > n}} \frac{w_k^- w_j^+}{n} + \frac{w_k^- w_k^+}{n}.
\end{equation}
Then by Definition~\ref{weight:assump} $\overline{\eta}^-_k=w^-_k o(1)$ and $\underline{\eta}^-_k=w^-_k o(1)$.
If $\overline{\eta}^-_k>0$, let $\overline Y^-_k $ be a Poisson distributed random variable with parameter $\overline{\eta}^-_k$ and we define $Z_k^-:=\overline Y^-_k +\overline V_k^-$. If $\overline{\eta}^-_k<0$, we may assume that $\overline V_k^-$ is the sum of two independent Poisson distributed random variables $Z_k^-$ and $\overline Y_k^-$ with parameters $w^-_k  \lambda^+$ and $-\overline{\eta}^-_k$. In any case $ Z_k^-$ is Poisson distributed with parameter $w^-_k \lambda^+$ and 
\begin{equation}
\P(Z^-_k\neq \overline V^-_k)=\P(\text{Poi}(\abs{\overline{\eta}^-_k})\geq 1)\leq \E [\text{Poi}(\abs{\overline{\eta}^-_k})]=\abs{\overline{\eta}^-_k},
\end{equation}
due to Markov's inequality. By a similar observation we find that $\P(\underline V^-_k\neq \overline V^-_k)\leq \underline{\eta}^-_k$. Then
\begin{eqnarray*} 
\P(D^-_k\neq Z^-_k) & \leq & \P(\underline{D}^-_k\neq Z^-_k)+ \P( \overline{D}^-_k\neq Z^-_k) \\
&\leq & \P(\underline{D}^-_k\neq \underline{V}^-_k) + \P(\underline{V}^-_k\neq \overline{V}^-_k) + 2 \P(Z^-_k\neq \overline{V}^-_k) + \P( \overline{D}^-_k\neq \overline{V}^-_k) .
\end{eqnarray*}
The claim follows by combining the considerations above.
\end{proof}

\section{Bootstrap Percolation for Finitary Vertex Type Sequences}\label{sec:finitary}
In this section we study bootstrap percolation in directed inhomogeneous random graphs with so-called finitary vertex sequences that are defined below. We extend the results later in Section~\ref{sec:approximation} by approximating the general weight sequences by finitary ones.  

\begin{definition}{\bf (Regular finitary vertex sequence)}\label{finitary:vertex}
We call a regular vertex sequence $({\bf w^-}, {\bf w^+}, \bf c  )$ \emph{finitary} if there exist positive integers $l_1, l_2,c_{\max} \in \mathbb{N}$ such that the following conditions are satisfied.
\begin{enumerate}[leftmargin=*]
 \item There exist \emph{weight levels} $0 < \tilde{w}_1^- < \tilde{w}_2^-< \dots < \tilde{w}_{l_1}^-$ and $0 < \tilde{w}_1^+ < \tilde{w}_2^+< \dots < \tilde{w}_{l_2}^+$ such that $\forall i \in [n]$, $w_i^- \in \cup_{j=1}^{l_1}  \{\tilde{w}_j^- \}$ and $w_i^+ \in \cup_{j=1}^{l_2} \{ \tilde{w}_j^+ \}$, that is, the weights take only finitely many values.
 \item $\forall i \in [n]$, either $c_i\leq c_{\max}$ or $c_i=\infty$.
\end{enumerate}
\end{definition}
\noindent
Observe that for a finitary vertex sequence there exists a partition of $[n]$ given by
 \begin{equation}
[n]=\bigcup_{\substack{1\leq j \leq l_1, 1\leq k \leq l_2\\m \in [c_{\max}] \cup \{0,\infty \} } } I_{j,k;\; m} \; ,
\end{equation}
into subsets with constant  threshold and in- and out-weights, i.e. $I_{j,k;m}:=\{i \in [n] \mid  (w^-_i,w^+_i)=(\tilde{w}^-_j,\tilde{w}^+_k), c_i=m\}$. Furthermore due to the regularity, there exist  $\gamma_{j,k;m}$ with $1=\sum \gamma_{j,k;m}$ such that $\abs{I_{j,k;m}}=\gamma_{j,k;m} n (1 + o(1))$.

Before we state the main theorem of this section we define some functions that will play a crucial role in the subsequent analysis. Let $(X,Y,Z):\Omega \rightarrow \mathbb{R}_+ \times \mathbb{R}_+ \times \mathbb{N}_0^\infty$ be a random variable defined on some probability space $\Omega$ such that $\E [Y] <\infty$. 
Define the function $f :\mathbb{R}_+ \rightarrow \mathbb{R}$ by
\begin{equation}\label{definition:f}
f(z;(X,Y,Z)):=\E [ Y \psi_Z( X z ) ] -z,
\end{equation}
where $\psi$ is as in~\eqref{eq:defpsi}. Further, define the function $g :\mathbb{R}_+ \rightarrow \mathbb{R}$ by
\begin{equation}\label{definition:g}
g(z;(X,Z)):=\E [ \psi_Z( X z ) ].
\end{equation}
We will use this functions without further reference in the rest of the paper.
We shall make use of the following simple property.
\begin{lemma}\label{f:continuous}
The functions $f(z;(X,Y,Z))$ and $g(z;(X,Z))$ are continuous. Furthermore, the equation $f(z;(X,Y,Z))=0$ has a smallest solution $\hat{z}\in [0,\E [ Y ]]$.
\end{lemma}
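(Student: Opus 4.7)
My plan is to split the proof into two independent parts: continuity of the two functions, and the existence of a smallest non-negative root of $f(\,\cdot\,;(X,Y,Z))$ that lies below $\E[Y]$.

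For continuity, I would first record that for each fixed $r \in \mathbb{N}_0$ the map $x \mapsto \psi_r(x) = \sum_{j \ge r} e^{-x} x^j/j!$ is a continuous function of $x \in \mathbb{R}_+$ (entire, even), while by definition $\psi_\infty \equiv 0$. Hence for every realization of $(X,Z)$, the map $z \mapsto \psi_Z(Xz)$ is continuous. Since $0 \le \psi_Z(Xz) \le 1$, the dominated convergence theorem with majorant $1$ (for $g$) and majorant $Y$ (for $f$; here the hypothesis $\E[Y] < \infty$ is used) shows that $z \mapsto \E[\psi_Z(Xz)]$ and $z \mapsto \E[Y\psi_Z(Xz)]$ are continuous. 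Subtracting the continuous map $z \mapsto z$ from the latter gives continuity of $f$.

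For the existence of a smallest root, I would evaluate $f$ at the endpoints of $[0,\E[Y]]$. At $z = 0$, $\psi_Z(0) = \P(\mathrm{Poi}(0) \ge Z) = \1_{\{Z = 0\}}$, so
\begin{equation*}
f(0;(X,Y,Z)) = \E[Y\,\1_{\{Z=0\}}] \ge 0.
\end{equation*}
At $z = \E[Y]$, the trivial bound $\psi_Z(\cdot) \le 1$ yields
\begin{equation*}
f(\E[Y];(X,Y,Z)) = \E\bigl[Y(\psi_Z(X\E[Y]) - 1)\bigr] \le 0.
\end{equation*}
Combined with the continuity established above, the intermediate value theorem produces at least one root in $[0,\E[Y]]$.

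Finally, the set $A := \{z \in [0,\E[Y]] : f(z;(X,Y,Z)) = 0\}$ is non-empty and, being the preimage of the closed set $\{0\}$ under a continuous function intersected with a compact interval, is closed and bounded; hence $\hat{z} := \min A$ exists and lies in $[0,\E[Y]]$. Any non-negative root smaller than $\hat z$ would already be an element of $A$, contradicting minimality, so $\hat z$ is indeed the smallest non-negative solution. I do not expect any substantial obstacle here; the only subtlety is the careful handling of the atom $\{Z = \infty\}$, which is covered by the convention $\psi_\infty \equiv 0$ throughout.
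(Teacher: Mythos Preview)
Your proposal is correct and follows essentially the same approach as the paper: dominated convergence (with majorants $Y$ and $1$) for continuity, the endpoint evaluations $f(0)\ge 0$ and $f(\E[Y])\le 0$ together with the intermediate value theorem for existence of a root, and closedness of the zero set for the existence of a minimum. You give slightly more detail at each step (e.g., explicitly computing $f(0)=\E[Y\1_{\{Z=0\}}]$ and handling the convention $\psi_\infty\equiv 0$), but the structure is identical.
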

\begin{proof}
For continuity, first observe that $z \mapsto Y \psi_Z( X z)$ is continuous point-wise, which implies 
\begin{equation*}
\lim_{h\rightarrow 0}\abs{Y(\psi_Z( X (z+h))-\psi_Z( X z))}=0.
\end{equation*}
 Furthermore, $\abs{Y (\psi_Z( X (z+h))-\psi_Z( X z))}$ is bounded by $Y$, and by assumption $\E[Y]<\infty$. The Dominated Convergence Theorem yields that 
\begin{equation*}
\lim_{h\rightarrow 0}\abs{\E [Y \psi_Z( X (z+h))] -\E[Y \psi_Z ( X z)]}=0,
\end{equation*}
from which continuity of $f$ follows. By a similar argument, the continuity of $g$ follows.
 
For the solution $f(z;(X,Y,Z))=0$, first observe that $f(0;(X,Y,Z))\geq 0$ and  $\E [ Y \psi_Z( X z ) ]\leq \E [ Y ]$, for all $z\in \mathbb{R}$ and therefore $f(\E [ Y ];(X,Y,Z))\leq 0$, which implies that a  solution exists by the continuity of $f(z;(X,Y,Z))$. This implies that the set $D_z:=\{z\in [0,\E [ Y ] \vert f(z)=0 \}$ is not empty. Further, by the continuity of $f$, the infimum of $ D_z$ must be attained and therefore $\hat{z}:=\inf D_z$ is the smallest solution for $f(z)=0$. 
\end{proof}
 The following theorem is the special case of our main result (Theorem~\ref{thm:2}) about the size of the final set of infected vertices for finitary vertex sequences.
\begin{theorem}\label{fin:weight:res}
Let $({\bf w^-}, {\bf w^+}, {\bf c})$ be a finitary regular vertex sequence and $(W^-,W^+,C)$ with $\P (C=0)>0$ a random vector with distribution equal to the limiting distribution of $({\bf w^-}, {\bf w^+}, {\bf c})$. 
Let further $\hat{z}$ be the smallest positive solution of 
\begin{equation*}
f(z;(W^-,W^+,C)) = 0.
\end{equation*}
Let $\mathcal{A}_n$ be the final set of infected vertices in $G_n ({\bf w}^-,{\bf w}^+, {\bf c})$. 
Then the following holds:
\begin{enumerate}
\item For all $\epsilon>0$ with high probability $n^{-1}\abs{\mathcal{A}_{n}} \geq  \E [ \psi_C( W^- \hat{z} ) ]-\epsilon$.
\item If $f'(\hat{z};(W^-,W^+,C))<0$, then
\begin{equation}
n^{-1}\abs{\mathcal{A}_{n}} \xrightarrow{p} \E [ \psi_C( W^- \hat{z} ) ], \text{ as } n\rightarrow \infty \label{final:set:p}.
\end{equation}
\end{enumerate}
\end{theorem}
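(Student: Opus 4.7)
The plan is to analyze the bootstrap percolation dynamics via a sequential reformulation combined with the differential equation method of Wormald. First I would reformulate the simultaneous infection process as a step-by-step process: at each step pick one infected-but-not-yet-revealed vertex $v$, reveal its outgoing edges, and update the infection counters of all target vertices; any vertex whose counter meets its threshold becomes newly infected and joins the queue of vertices to be revealed. Since each edge is examined exactly once (from its source, when the source gets revealed), the terminal infected set coincides with $\mathcal{A}_n$ regardless of the order in which queued vertices are picked.

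Next I would track, for each type class $I_{j,k;m}$ and each partial count $r \in \{0,1,\dots,m-1\}$, the number $Y^{(r)}_{j,k;m}(t)$ of vertices in that class whose infection counter equals $r$ after $t$ revelation steps, together with the queue length $U(t)$ of revealed-but-already-infected vertices still waiting. The key drift computation: when a vertex of out-weight $\tilde w^+_k$ is revealed, each unrevealed target of in-weight $\tilde w^-_{j'}$ receives a hit independently with probability $\min\{1,\tilde w^+_k\tilde w^-_{j'}/n\}$. Summing these contributions over type classes yields a stochastic recursion whose normalized counterpart $y^{(r)}_{j,k;m}(z):=Y^{(r)}_{j,k;m}(\lfloor nz\rfloor)/n$ should track the solution of an autonomous ODE system in rescaled time $z=t/n$. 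Direct integration (using the fact that the counter of a vertex of in-weight $\tilde w^-_{j'}$ that has survived until time $z$ has an approximately Poisson$(\tilde w^-_{j'}z)$ distribution) gives that the fraction of vertices of type $(j,k;m)$ already infected at time $z$ equals $\gamma_{j,k;m}\,\psi_m(\tilde w^-_j z)+o(1)$, while the normalized queue length is $f(z;(W^-,W^+,C))+o(1)$. The process therefore naturally halts at the smallest zero $\hat z$ of $f$.

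To make this rigorous I would invoke Wormald's method, whose hypothesis requires Lipschitz drift functions. The difficulty is that the truncation $\min\{1,\tilde w^+_k\tilde w^-_{j'}/n\}$ in (\ref{conn:prob}) spoils smoothness when some products $\tilde w^-_{j'}\tilde w^+_k$ exceed $n$. I would therefore restrict the ODE approximation to the subdomain where all relevant pair-products are $\le n$, apply Wormald's theorem there to obtain concentration of $Y^{(r)}_{j,k;m}(t)$ and $U(t)$ uniformly for $z\in[0,\hat z-\delta]$, and handle the ``truncated'' contribution separately by a direct probabilistic estimate: the total number of hits produced through edges lying in the excluded region has expectation $o(n)$ under the regularity of $({\bf w}^-,{\bf w}^+)$ (this follows from the same kind of tail computation used in Lemma~\ref{nb:edges}), so the number of extra infections caused this way is $o_p(n)$ and can be absorbed in the error.

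Finally, Part~1 follows by stopping the sequential process at any $z<\hat z$, using the concentration to conclude that at least $n\sum_{j,k,m}\gamma_{j,k;m}\psi_m(\tilde w^-_j z)-\epsilon' n=n(\E[\psi_C(W^-z)]-\epsilon' )$ vertices are infected with high probability, and letting $z\uparrow\hat z$ together with continuity of $g$. For Part~2, the assumption $f'(\hat z;(W^-,W^+,C))<0$ ensures that $f$ is strictly negative on some interval $(\hat z,\hat z+\delta)$; this lets me extend the coupled ODE description slightly past $\hat z$ and show that once the queue empties at time $\hat z$ the number of unrevealed vertices which are still ``close to threshold'' is too small to restart a macroscopic cascade, so $n^{-1}\abs{\mathcal{A}_n}$ cannot overshoot $g(\hat z;(W^-,C))$ by more than $o_p(1)$. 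The main obstacle I expect is precisely the Lipschitz/truncation issue: carefully controlling the error contributed by atypical high-weight vertices outside the Lipschitz domain and coupling the truncated dynamics to the full process without losing $\Theta(n)$ infections requires a dedicated second-moment argument on the cumulative excess drift.
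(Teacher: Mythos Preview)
Your overall architecture---sequential reformulation, type-class tracking, Wormald's method---matches the paper, but you have misidentified the genuine obstacle and this propagates into both parts of the argument.

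First, the truncation $\min\{1,\tilde w^+_k\tilde w^-_{j'}/n\}$ is a non-issue in the finitary case: the weights are bounded (by $\tilde w^-_{l_1}$ and $\tilde w^+_{l_2}$), so for $n$ large enough the minimum is never active and the drift functions are polynomial. The actual Lipschitz obstruction is that the drift contains the ratio $\mu(\tau)/\nu(\tau)$ (total queued out-weight over queue length), which blows up as the queue empties near the stopping time $\hat\tau$. The paper works on a domain $\{\nu>\delta_2\}$ and then argues separately that $\delta_2$ can be taken to $0$. Your ``dedicated second-moment argument on the cumulative excess drift'' is aimed at the wrong target.

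Second, your time parameterisation is off. With $z=t/n$ the number of steps, the infection counter of a vertex of in-weight $\tilde w^-_{j'}$ is \emph{not} approximately $\mathrm{Poi}(\tilde w^-_{j'}z)$; it is $\mathrm{Poi}\bigl(\tilde w^-_{j'}\cdot n^{-1}\sum_{\text{revealed }v}w^+_v\bigr)$. The correct ``clock'' is $z(\tau)=\int_0^\tau \mu(s)/\nu(s)\,ds$, the cumulative out-weight exposed per vertex, and it is in this variable that the queue length equals $f(z(\tau))$. This is why the $\mu/\nu$ singularity is unavoidable.

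Third, and most seriously, your plan for Part~2---``extend the coupled ODE description slightly past $\hat z$''---cannot work as stated, because $\nu(\hat\tau)=0$ is exactly where the ODE loses Lipschitz continuity and Wormald's theorem stops applying. The paper does not push the ODE past the singularity; instead it stops at some $\tau_\epsilon<\hat\tau$, then runs a separate inductive argument: it splits the remaining uninfected vertices into ``weak'' (one more hit needed) and ``strong'' (at least two more), and shows by induction on rounds $\ell$ that $\E[|\mathcal W_\ell|]$ and $\E[|\mathcal S_\ell|]$ decay geometrically, using precisely the hypothesis $f'(\hat z)<0$ to get the contraction constant below $1$. Your sketch (``too small to restart a macroscopic cascade'') is gesturing at the right intuition but does not supply a mechanism; the geometric-decay induction is the missing idea.
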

\begin{proof}
We first show 1. We shall determine the size of the final set of infected vertices by sequentially exposing the neighbors of all vertices that are either infected initially or become infected during the process. Informally,  in the beginning we declare all initially infected vertices as \emph{unexposed}. At each step a single unexposed vertex $i \in [n]$ is considered and its neighbors are exposed. If a neighbor $j$ of $i$  becomes infected due to the new edge that is sent from $i$, it is added to the set of unexposed infected vertices. Otherwise, the threshold value of $j$ is reduced by $1$. Finally, $i$ is removed from the set of unexposed vertices; we say that $i$ is \emph{exposed}.

To describe the process formally at each step $t\in [n] \cup \{ 0 \} $ we keep track of the following sets:
\begin{enumerate}[leftmargin=*]
\item The set $U(t)$ of unexposed vertices. We set $U(0):=\{ i \in [n] \vert c_i=0 \}$.
\item The set $\overline{U}(t)$ of newly infected vertices at step $t$ in order to update $U(t)$. We set $\overline{U}(0):=\emptyset$.
\item The sets $I_{j,k;m}^{l}(t)$ for $1\leq j\leq l_1, 1\leq k\leq l_2,0\leq l <m \leq c_{\max}$ of vertices with weight levels $\tilde{w}^-_j$ and $\tilde{w}^+_k$, percolation threshold $m$ and $l$ edges from exposed vertices. Moreover, set 
\begin{equation*}
I_{j,k;m}^{0}(0) := I_{j,k;m} 
\quad \text{and} \quad
I_{j,k;m}^{l}(0) := \emptyset \,\text{ for }\, l > 0
\end{equation*}
\end{enumerate}
At step $t\in [n]$ the sets are updated by the following procedure:
\begin{enumerate}[leftmargin=*]
\item For $1\leq j\leq l_1, 1\leq k\leq l_2,0 \leq l <m \leq c_{\max}$ set
$I_{j,k;m}^{l}(t) := \emptyset$  and $\overline{U}(t):=\emptyset$.
\item Chose a vertex $v\in U(t-1)$ uniformly at random.
\item Expose all neighbors of $v$ in 
\begin{equation*}
\bigcup_{\substack{1\leq j \leq l_1, 1\leq k \leq l_2\\0 \leq l<m \leq c_{\max}} } I_{j,k;m}^{l}(t-1).
\end{equation*}
 Let $w\in I_{j,k;m}^{l}(t-1)$. If there is no edge send from $v$ to $w$ ($X_{v,w}=0$), place $w$ in $I_{j,k;m}^{l}(t)$. If there is an edge ($X_{v,w}=1$) and $l=m-1$, place $w$ in $\overline{U}(t)$. If there is an edge ($X_{v,w}=1$) and $l<m-1$, then place $w$ in $I_{j,k;m}^{l+1}(t)$.
\item Set $U(t):= (U(t-1)\setminus \{v\}) \cup \overline{U}(t)$.
\end{enumerate}
Edges that are sent to already infected vertices are not exposed. The above steps are repeated until step $\hat{t}$, the first time $U(t)$ is empty. Note that $\hat{t}$ is the final number of infected vertices.

Denote by $c_{j,k;m}^{l}(t)$ the size of $I_{j,k;m}^{l}(t)$ for $1\leq j\leq l_1, 1\leq k\leq l_2,0 \leq l<m \leq c_{\max}$ and by $u(t)$ the size of $U(t)$. 
Additionally we need to keep track of the total out going weights in the set $U(t)$, which we shall denote by $w (t)$.
Further we use the vector $h (t)$ to describe the state of the entire system, that is 
\begin{equation*}
h (t)=\left(u(t),w (t),\{c_{j,k;m}^{l}(t)\}_{\substack{j \in [l_1], k\in [l_2]\\0 \leq l<m \leq c_{\max}} }\right).
\end{equation*}
First observe that for $n$ sufficiently large, we can ignore the minimization in (\ref{conn:prob}), since $w_i^+ w_i^-$ is bounded for finitary weight sequences and the denominator in (\ref{conn:prob}) is $n$. Conditioning on the weight of the selected vertex and using the law of total expectation one obtains that the expected evolution of the system is governed by the following equations:
\begin{eqnarray*}
\E[u(t)-u(t-1)\vert  h(t-1)]&=&-1+ n^{-1}\sum_{j} \left(\sum_{k,m} c^{m-1}_{j,k;m}(t-1)\right)\frac{\tilde{w}_j^-w (t-1)}{u(t-1)},\label{markov:chain:1}\\
\E[w (t)-w (t-1)\vert  h (t-1)]&=&- \frac{w (t-1)}{u(t-1)} 
+ n^{-1}\sum_k \tilde{w}_k^+ \left( \sum_{j,m} c^{m-1}_{j,k;m}(t-1)  \frac{\tilde{w}_j^-w (t-1)}{ u(t-1)} \right),
\end{eqnarray*}
and for the index set $1\leq j \leq l_1, 1\leq k \leq l_2,1\leq m < c_{\max}$ and $0\leq l < m$
\begin{eqnarray*}
\E[c^{l}_{j,k;m}(t)-c^{l}_{j,k;m}(t-1)\vert  h(t-1)] = n^{-1}\left(\1_{\{l\neq 0\}}c^{l-1}_{j,k;m}(t-1) - c^{l}_{j,k;m}(t-1)\right) \left(\frac{\tilde{w}_j^- w (t-1)}{u(t-1)}\right).
\end{eqnarray*}
We will approximate the quantities $h (t)/n$ using the method proposed in \cite{wormald1995} by a vector valued function 
\begin{equation}
\left(\nu (\tau) ,\mu (\tau),\{\gamma^{l}_{j,k;m}(\tau)\}_{\substack{1\leq j \leq l_1, 1\leq k \leq l_2\\0 \leq l<m \leq c_{\max}} }\right)
\end{equation}
 solving the following system of ordinary differential equations:
\begin{eqnarray}
\frac{\dd \nu (\tau)}{\dd \tau}&=&-1 + \sum_{j} \left(\sum_{k,m} \gamma^{m-1}_{j,k;m}(\tau)\right)\frac{\tilde{w}_j^-\mu (\tau)}{ \nu(\tau)},\label{ode:system:2}\\
\frac{\dd \mu (\tau)}{\dd \tau}&=&- \frac{\mu (\tau)}{\nu (\tau)} + \sum_k \tilde{w}_k^+ \left( \sum_{j,m} \gamma^{m-1}_{j,k;m}(\tau)  \frac{\tilde{w}_j^- \mu (\tau)}{\nu (\tau)}\right),\label{ode:system:3}\\
\frac{\dd \gamma^{l}_{j,k;m}(\tau)}{\dd \tau}&=&\left(\1_{\{l\neq 0\}}\gamma^{l-1}_{j,k;m}(\tau) - \gamma^{l}_{j,k;m}(\tau)\right) \left(\frac{\tilde{w}_j^- \mu (\tau)}{\nu (\tau)}\right),\label{ode:system:1}
\end{eqnarray}
with initial conditions
\begin{eqnarray}
\nu (0) &=& \mathbb{P} (C=0),\label{ode:system:ini:1} \\
\mu (0)&=&\sum_{k} \tilde{w}_k^+ \P(W^+=\tilde{w}^+_k,C=0), \label{ode:system:ini:2} \\ 
\gamma^{0}_{j,k;m}(0)&=& \P(W^-=\tilde{w}^-_j,W^+=\tilde{w}^+_k,C=m), \label{ode:system:ini:3} \\
\gamma^{l}_{j,k;m}(0)&=&0, \text{ for } 0<l<m \label{ode:system:ini:4}.
\end{eqnarray}
For $\delta_1,\delta_2>0$ we consider the domain
\begin{eqnarray}\label{convergence:area}
D_{\delta_1,\delta_2}
	=
	\Big\{
		(\tau ,\nu, \mu,\gamma_{j,k;m}^{l}) \in \mathbb{R}^{b+1}
		&\mid & -\delta_1 < \tau < 1,
		-\delta_1 < \frac{\mu}{\nu} <  2 \tilde{w}_{l_2}^+ ,
		-\delta_1 < \gamma_{j,k;m}^{l} < \gamma_{j,k;m} + \delta_1, \nonumber \\ 
		&& \delta_2 < \nu <1 + \delta_1, 0 < \mu < 2 \tilde{w}_{l_2}^+ \Big\}\label{convergenceset}.
\end{eqnarray}
The system (\ref{ode:system:1})-(\ref{ode:system:2}) fulfills a Lipschitz condition on $D_{\delta_1,\delta_2}$ for $\delta_2>0$ as can be easily seen by calculating the partial derivatives.
Further, in order to apply \cite[Thm. 1]{wormald1995} we have to show that there are functions $\lambda(n)=\lambda$ with $\lambda \rightarrow \infty$ and $\omega (n)=\omega $ such that $\lambda^4 \log n < \omega < n^{2/3} /\lambda $ and
\begin{equation}\label{bounds:diff:lambda}
\mathbb{P} \left( \norm{h (t+1) - h (t)}_{\max} > \frac{\sqrt{\omega}}{\lambda^2 \sqrt{ \log n}} \vert h(t)  \right)=o(n^{-3}).
\end{equation}
If one chooses $\lambda (n)=n^{1/8}$ and $\omega (n) = B^2 n^{25/48}$ where $B>0$ is a constant, then 
\begin{equation}
\frac{\sqrt{\omega}}{\lambda^2 \sqrt{\log n}}=B n^{1/96}/\sqrt{\log n},
\end{equation}
and it remains to show that the maximal degree is bounded by $n^{1/96}/\sqrt{\log n}$, since due to the bounded weights, $B$ can be chosen such that (\ref{bounds:diff:lambda}) holds. This is done in Lemma~\ref{max:degrees:sum}.
According to \cite[Thm. 1]{wormald1995} we get
\begin{eqnarray}\label{dgl:convergence}
c^{l}_{j,k;m}(t)/n&=& \gamma^{l}_{j,k;m}(t/n) + o_p (1) \label{convergence:1}\\
u(t)/n&=& \nu (t/n) + o_p (1) \label{convergence:2}\\
w(t)/n&=& \mu (t/n) +o_p (1)\label{convergence:3}
\end{eqnarray}
where $\gamma^{l}_{j,k;m}(\tau),\nu (\tau)$ and $\mu (\tau)$ solve (\ref{ode:system:1})-(\ref{ode:system:3}) and where (\ref{convergence:1})-(\ref{convergence:3}) holds until the solution leaves $D_{\delta_1,\delta_2}$. Since $\delta_2$ can be chosen arbitrarily close to $0$, it is clear that the solution can be extended to the region $D_{\delta_1,0}$.

An easy but tedious calculation shows that the solutions $\nu$ and $\mu$ are given by
\begin{equation}
\nu(\tau) =\nu (0) -\tau +\sum_{j,k}  \left(\sum_{m} \gamma^{0}_{j,k;m}(0) \P \left[ \text{Poi} \left( \tilde{w}_j^- z(\tau) \right) \geq m \right] \right)\label{solution:nu}
\end{equation}
and
\begin{equation}\label{solution:mu}
\mu (\tau) =\mu (0) -\int_0^{\tau} \frac{\mu (s)}{\nu (s)}\dd s +\sum_{j,k,m}   \tilde{w}_k^+ \gamma^{0}_{j,k;m}(0) \P \left[ \text{Poi} \left( \tilde{w}_j^- z(\tau) \right) \geq m \right],
\end{equation}
 with $z(\tau):=\left( \int_0^{\tau}\frac{\mu (s)}{\nu (s)}\dd s \right)$.
Define
\begin{equation}
\tau_{D_{\delta_1,\delta_2}}=\min \{\tau \vert (\tau ,\nu (\tau), \mu (\tau),\gamma_{j,k;m}^{l} (\tau)) \notin D_{\delta_1,\delta_2}) \}
\end{equation}
Observe that $f(z(\tau))=\mu(\tau)$ for $\tau<\tau_{D_{\delta_1,0}}$. Since $z$ is strictly increasing in $\tau$ as long as $(\tau ,\nu (\tau), \mu (\tau),\gamma_{j,k;m}^{l} (\tau)) \in D_{\delta_1,0}$ the function $z$ is injective.
We need to ensure that we can choose $\delta_2$ small enough such that the process can be approximated arbitrarily close to $\hat{\tau}$, which is such that $z(\hat{\tau})$ equals $\hat{z}$, the smallest zero of $f$. Observe that $\hat{z}>0$, since $\P(C=0)>0$ implies that $f(0)>0$.

Therefore we need to show that for any given $\epsilon$ we can chose $\delta_2$ small enough such that there exists $\tau_{\epsilon}< \tau_{D_{\delta_1,\delta_2}}$ with 
\begin{equation}\label{ineq:eps}
\hat{z} - \epsilon < \int_0^{\tau_{\epsilon}}\frac{\mu (s)}{\nu (s)}\dd s.
\end{equation}
Since $\hat{z}$ is assumed to be the first zero of $f$ and since $f$ is continuous on the compact set $[0,\hat{z} - \epsilon ]$ it attains its minimum at some point $z_{\min} \in [0,\hat{z} - \epsilon ]$. 
Further, observe that we must have $\nu (\tau)\geq \mu(\tau)/\tilde{w}^+_{l_2}$ such that choosing 
\begin{equation}
0<\delta_2 (\epsilon) <f(z_{\min})/\tilde{w}^+_{l_2}
\end{equation}
ensures that there exists $\tau_{\epsilon} < \tau_{D_{\delta_1,\delta_2(\epsilon)}} $ such that the inequality in (\ref{ineq:eps}) holds and the convergence in (\ref{dgl:convergence}) holds at least until $\tau_{\epsilon}$.
Since $\epsilon$ can be chosen arbitrarily close to $0$, we can conclude that $\mu (\tau)$ converges to $0$ as $z(\tau)$ approaches $\hat{z}$ by continuity of $f$. Because $\tilde{w}^+_1 \nu(\tau)\leq \mu (\tau)\leq \tilde{w}_{l_2}^+ \nu(\tau) $ on $D_{\delta_1,0}$ we know that also $\nu(\tau)$ converges to $0$.
For any given $\epsilon$ we get that 
\begin{equation}\label{u:fraction:stop}
u(\lfloor \tau_{\epsilon} n \rfloor )/n=\nu (\tau_{\epsilon} ) + o_p(1).
\end{equation}
From $\nu (\hat{\tau})=0$ it follows that
\begin{eqnarray}
\hat{\tau} &=&\nu (0) +\sum_{j,k}  \left(\sum_{m} \gamma^{0}_{j,k;m}(0) \P \left[ \text{Poi} \left( \tilde{w}_j^- z( \hat{\tau}) \right) >m \right] \right) \nonumber\\
&=& \nu (0) +\sum_{j,k}  \left(\sum_{m} \gamma^{0}_{j,k;m}(0) \P \left[ \text{Poi} \left( \tilde{w}_j^-  \hat{z} \right) >m \right] \right) = \E [ \psi_C( W^- \hat{z} ) ].\label{tau:equality}
\end{eqnarray}
Since $\abs{\mathcal{A}_n}/n\geq\hat{\tau} + o_p(1)$, the claim follows.

In order to prove 2.\ we need to show that the process $u(t)$ becomes zero soon after the $\tau_{\epsilon} n$ steps or equivalently, that the remaining infections triggered by  $U(\lfloor \tau_{\epsilon} n\rfloor )$ are negligible. We shall expose all remaining vertices in $U$ at once and bound the number of infections triggered by $U(\lfloor \tau_{\epsilon} n\rfloor)$. 
Denote by $\mathcal{W}:=\bigcup_{ j , k , m} I_{j,k;m}^{m-1}$ the set of (weak) vertices that need only one more infected neighbor to become infected and by $\mathcal{S}:=\bigcup_{ j , k , m\geq l+2} I_{j,k;m}^{l}$ the set of (strong) vertices that need at least two more infected neighbors to become infected. Further, denote $\mathcal{N}_l \subset \mathcal{W}\cup \mathcal{S}$ the set of vertices that become infected in the $l$th round after exposing $U(\lfloor \tau_{\epsilon} n \rfloor )$ and define
\begin{eqnarray*}
\mathcal{W}_l&:=&\mathcal{W} \cap \mathcal{N}_l, \qquad
\mathcal{S}_l := \mathcal{S} \cap \mathcal{N}_l \enspace.
\end{eqnarray*}
Since $(W^-,W^+,C)$ takes only finitely many values, observe that differentiation under the integral can be justified and it readily follows that $f$ is continuously differentiable. Moreover, by~(\ref{ode:system:3}) and the fact that $f^{'}(\hat{z})=\kappa <0$ by assumption and by continuity of $f^{'}$ we can chose $\epsilon_\kappa>0$ such that for $0 < \epsilon<\epsilon_\kappa$
\begin{equation*}
\left( \sum_{j,k,m} \gamma_{j,k;m}^{m-1} (\tau_{\epsilon}) \tilde{w}^-_j \tilde{w}^+_k\right)\leq 1 + \kappa /2<1.
\end{equation*}
Further observe that $\sum_{j,k,m} \gamma_{j,k;m}^{m-1}(\tau_{\epsilon} )<1$ for $0<\epsilon\leq \epsilon_\kappa$.
Set 
\begin{equation*}
c_1:=\max \Big\{1 + \kappa /2 ,\sum_{j,k,m} \gamma_{j,k;m}^{m-1}(\tau_{\epsilon_\kappa} ) \Big\}<1
\end{equation*}
and chose  $0<c_2,c<1$ such that $0\leq c_1+c_2\leq c<1$. Further define
\begin{eqnarray*}
C_1&:=&2\max \{ \tilde{w}^-_{l_1}, 1 \}\\
C_2&:=&\max \{ \left( \tilde{w}^-_{l_1} \right)^2,(\tilde{w}^-_{l_1} \tilde{w}^+_{l_2}  )^2\} \frac{C_1^2}{1-c} \\
C_3&:=& \max \{ \tilde{w}^-_{l_1} \tilde{w}^+_{l_2},1 \} 
\end{eqnarray*}
and chose $x_0$ such that 
$C_2 x ^2\leq ((c_1 c_2 c)/C_3) x$ for $0<x< x_0$. Chose further $\epsilon_0$ such that $2 \mu (\tau_{\epsilon} ) \leq x_0$ for $\epsilon<\epsilon_0$. Let the event $\mathcal{A}$ be defined by $\mathcal{A}:=\{ w( \lfloor \tau_{\epsilon} n\rfloor )/n \leq 2 \mu (\tau_{\epsilon} )\}$ and observe that $\lim_{n \rightarrow \infty} \P (\mathcal{A}^c)=0$. We shall prove by induction on $l$ that for $\epsilon<\epsilon_0$
\begin{eqnarray}
n^{-1}\E[\abs{\mathcal{W}_l} \cdot \1_{\mathcal{A}} ]&\leq &c_1 c^{l-1} \mu (\tau_{\epsilon} )  C_1  \label{p:1} \\
n^{-1}\E[\abs{\mathcal{S}_l} \cdot \1_{\mathcal{A}} ]&\leq &C_2 \mu (\tau_{\epsilon} ) ^2 c^{l-2} \leq (c_2/C_3) c^{l-1} \mu (\tau_{\epsilon} ).\label{p:2}
\end{eqnarray}
The estimates (\ref{p:1}) and (\ref{p:2}) especially imply that 
\begin{equation*}
n^{-1}\E[ ( \abs{\mathcal{W}_l} + \abs{\mathcal{S}_l} ) \cdot \1_{\mathcal{A}}] \leq c^l C_1 \mu (\tau_{\epsilon} ) 
\end{equation*}
and 
\begin{equation}\label{size:final:set}
n^{-1}\left( \sum_l \E[ ( \abs{\mathcal{W}_l} + \abs{\mathcal{S}_l} ) \cdot \1_{\mathcal{A}} ] \right) \leq \frac{C_1}{1-c} \mu (\tau_{\epsilon} ) .
\end{equation}
For $l=1$ observe that for a vertex $x\in \mathcal{I}^{1}$ the conditional probability to become infected by vertices in $U(\lfloor \tau_{\epsilon} n\rfloor)$ given that $\mathcal{A}$ happened, is bounded by
\begin{equation*}
\tilde{w}^-_{l_1}   \mu( \tau_{\epsilon}) 2
\end{equation*}
such that
\begin{equation}
n^{-1}\E[\abs{\mathcal{W}_1} \cdot \1_{\mathcal{A}}] \leq \sum_{j,k,m} \gamma_{j,k;m}^{m-1}(\tau_{\epsilon} )  \tilde{w}^-_{l_1}  \mu( \tau_{\epsilon}) 2 \leq c_1 \mu (\tau_{\epsilon} )  C_1.\label{induc:I:1:1}
\end{equation}
Further, the conditional probability for a vertex to be in $\mathcal{S}_1$, given  $\mathcal{A}$, is bounded by
\begin{equation*}
\left( \tilde{w}^-_{l_1}  \mu( \tau_{\epsilon})2 \right)^2
\end{equation*}
and thus choosing $\epsilon<\epsilon_0$ such that $2\mu (\tau_{\epsilon} )<x_0$ yields $\E[\abs{\mathcal{S}_1}\cdot\1_{\mathcal{A}}] /n  \leq C_2 \mu (\tau_{\epsilon} )^2 \leq c_2/C_3 \mu (\tau_{\epsilon} )$ by definition of $C_2$ and $x_0$ for $n\geq n_0$.

Assume now that (\ref{p:1}) and (\ref{p:2}) hold for $1\leq k \leq l$. For a vertex $x\in \mathcal{W}$ to be in $\mathcal{W}_{l+1}$ it needs to have at least one neighbor in $\mathcal{N}_{l-1}$. We shall show the slightly stricter recursion for $\mathcal{W}_{l}$, namely
\begin{equation*}
\frac{1}{n}\left( \sum_{x\in \mathcal{W}} w^-_{x} \right) \sum_{y\in \mathcal{W}\cup \mathcal{S}} \frac{w_y^+ \mathbb{P}(y \in \mathcal{N}_{l})}{n}\leq c_1 c^{l} \mu (\tau_{\epsilon} )  C_1,
\end{equation*}
given that $\mathcal{A}$ happened, from which clearly (\ref{p:1}) follows (note that for $l=1$ this was captured in (\ref{induc:I:1:1}) already) since 
\begin{equation*}
\E[\abs{\mathcal{W}_{l+1}} \cdot \1_{\mathcal{A}} ]\leq \sum_{x \in \mathcal{W},y\in \mathcal{W}\cup \mathcal{S}} \mathbb{P} ( X_{x,y}=1  )   \mathbb{P} (y \in \mathcal{N}_{l} ).
\end{equation*}
First observe that
\begin{eqnarray}
&& \frac{1}{n}\left( \sum_{x\in \mathcal{W}} w^-_{x} \right) \sum_{y\in \mathcal{W}\cup \mathcal{S}} \frac{w_y^+ \mathbb{P}(y \in \mathcal{N}_{l})}{n}\nonumber \\
&= & \frac{1}{n} \sum_{x \in \mathcal{W},y\in \mathcal{W}  } \mathbb{P} ( X_{x,y}=1 )   \mathbb{P} (y \in \mathcal{W}_{l} ) +\frac{1}{n} \sum_{x \in \mathcal{W},y\in \mathcal{S}} \mathbb{P} ( X_{x,y}=1 )   \mathbb{P} (y \in \mathcal{S}_{l} )\nonumber \\
& \leq & \frac{1}{n} \left( \sum_{x\in \mathcal{W}} w^-_{x}\right) \sum_{y\in \mathcal{W}} \frac{w_y^+ \mathbb{P}(y \in \mathcal{W}_{l})}{n} +  \frac{1}{n} \tilde{w}^-_{l_1} \tilde{w}^+_{l_2} \E[\abs{\mathcal{S}_{l}}] \nonumber \\
&\leq & \frac{1}{n} \left( \sum_{x\in \mathcal{W}} w^-_{x} \right) \left( \sum_{z\in \mathcal{W}} \frac{w_z^+  w_z^-}{n} \right)  \sum_{y\in \mathcal{W}\cup \mathcal{S}} \frac{w_y^+ \mathbb{P}(y \in \mathcal{N}_{l-1})}{n} + \frac{1}{n} \tilde{w}^-_{l_1} \tilde{w}^+_{l_2}  \E[\abs{\mathcal{S}_{l}}].\label{induction:end}
\end{eqnarray}
The middle factor in the first summand is bounded by $c_1$ by definition of $c_1$. The induction step then implies that (\ref{induction:end}), given that $\mathcal{A}$ happened, is bounded by 
\begin{equation*}
c_1 c_1 c^{l-1} \mu (\tau_{\epsilon} )  C_1  + C_3 C_2 \mu (\tau_{\epsilon} )^2 c^{l-2}  \leq c_1 c^{l} \mu (\tau_{\epsilon} )  C_1.
\end{equation*}
To calculate $\E[\abs{\mathcal{S}_{l+1}} \cdot \1_{\mathcal{A}}]$, we first observe that for a vertex to be in $\mathcal{S}_{l+1}$ it needs to have at least one neighbor in $\mathcal{N}_l$ and one in $\cup_{k\leq l} \mathcal{N}_k$. Using $\E [\abs{\mathcal{W}_k}]=\sum_{x\in \mathcal{W}} \mathbb{P}(x \in \mathcal{W}_k)$ and $\E[\abs{\mathcal{S}_{l+1}}]=\sum_{x\in \mathcal{S}} \mathbb{P}(x \in \mathcal{S}_{l+1})$ for $k\leq l$ we find
\begin{equation*}
\E[\abs{\mathcal{S}_{l+1}}]\leq n(\tilde{w}^-_{l_1} \tilde{w}^+_{l_2} /n)^2 \sum_{x\in \mathcal{W}\cup \mathcal{S}} \mathbb{P}(x \in \mathcal{N}_{l}) \sum_{x\in \mathcal{W}\cup \mathcal{S}} \mathbb{P}(x \in \cup_{k\leq l} \mathcal{N}_{k}).
\end{equation*}
However, by the induction step we know that 
\begin{equation*}
\sum_{x\in \mathcal{W}\cup \mathcal{S}} \mathbb{P}(x \in \mathcal{N}_{l})\leq n c^l \mu (\tau_{\epsilon} ) C_1
\end{equation*}
and
\begin{equation*}
\sum_{x\in \mathcal{W}\cup \mathcal{S}} \mathbb{P}(x \in \cup_{k\leq l} \mathcal{N}_{k})\leq \sum_{k\leq l} n c^k C_1 \mu (\tau_{\epsilon} )  \leq  n \frac{C_1}{1-c}  \mu (\tau_{\epsilon} ),
\end{equation*}
given that $\mathcal{A}$ happened. This yields 
\begin{equation*}
n^{-1}\E[\abs{\mathcal{S}_{l+1}} \cdot \1_{\mathcal{A}}] \leq C_2 c^{l-1} \mu ( \tau_{\epsilon}  ) \mu ( \tau_{\epsilon}  ) \leq (c_2/C_3) c^{l} \mu ( \tau_{\epsilon}  ),
\end{equation*}
proving (\ref{p:2}).
By (\ref{size:final:set}) and Markov's inequality we get
\begin{equation}\label{markov:ineq}
\mathbb{P} \left( n^{-1} \sum_{1\leq l\leq n} (\abs{\mathcal{W}_l} + \abs{\mathcal{S}_l})\cdot \1_{\mathcal{A}} \geq \sqrt{\frac{C_1}{1-c} \mu (\tau_{\epsilon} )} \right) \leq \sqrt{\frac{C_1}{1-c} \mu (\tau_{\epsilon} )}.
\end{equation}
Since $\abs{\mathcal{A}_n}/n\leq \hat{\tau} + \left(\sum_l \abs{\mathcal{W}_l} + \abs{\mathcal{S}_l}  \right)/n$ and $\lim_{n \rightarrow \infty} \P (\mathcal{A}^c)=0$, the claim in (\ref{final:set:p}) follows from (\ref{markov:ineq}) together with (\ref{tau:equality}).
\end{proof}
In the proof of the last theorem we needed the following simple (and far from tight) result about the maximum degree in the random graph.
\begin{lemma}\label{max:degrees:sum}
Let $D^- :=\max_{i\in [n]} D^-_i$ and $D^+:=\max_{i\in [n]} D^+_i$ denote the maximal in- and out-degrees in $G_n({\bf w}^-,{\bf w}^+)$, where $({\bf w}^-,{\bf w}^+)$ is finitary.
Then $\P (D^+,D^- \geq  n^{1/100})=o(n^{-3})$.
\end{lemma}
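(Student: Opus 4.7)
The plan is to exploit the fact that for a finitary regular weight sequence, all in- and out-weights are uniformly bounded by the constants $\tilde{w}^-_{l_1}$ and $\tilde{w}^+_{l_2}$. Consequently, for any fixed vertex $i\in [n]$, the in-degree $D^-_i=\sum_{j\neq i} X_{j,i}$ is a sum of independent Bernoulli variables. Once $n$ is large enough that the minimization in~\eqref{conn:prob} plays no role (which happens as soon as $n>\tilde{w}^-_{l_1}\tilde{w}^+_{l_2}$), each summand has success probability $\tilde{w}^-_i\tilde{w}^+_j/n$, so the mean of $D^-_i$ is bounded uniformly by the absolute constant $C:=\tilde{w}^-_{l_1}\tilde{w}^+_{l_2}$. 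The identical bound applies to $D^+_i$.

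With such a uniform bound on the mean, the tail of $D^-_i$ decays faster than any polynomial. Concretely, I would invoke the standard multiplicative Chernoff bound: for a sum $S$ of independent Bernoullis with $\E[S]\le C$,
\begin{equation*}
\P(S\ge k)\le \left(\frac{eC}{k}\right)^k \qquad \text{for every integer } k>C.
\end{equation*}
Plugging in $k=\lceil n^{1/100}\rceil$, one checks that for $n$ sufficiently large
\begin{equation*}
\left(\frac{eC}{k}\right)^k =\exp\!\left(k\log\frac{eC}{k}\right) \le \exp\!\left(-\tfrac{1}{200}\, n^{1/100}\log n\right) = n^{-\omega(1)},
\end{equation*}
in particular this is $o(n^{-4})$.

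A union bound over the $2n$ events $\{D^-_i\ge n^{1/100}\}$ and $\{D^+_i\ge n^{1/100}\}$, $i\in [n]$, then yields
\begin{equation*}
\P\bigl(D^-\ge n^{1/100}\text{ or }D^+\ge n^{1/100}\bigr)\le 2n\cdot o(n^{-4})=o(n^{-3}),
\end{equation*}
which is the claim. I do not foresee any substantive obstacle here: the only essential input is the finitary hypothesis, which makes the individual expected degrees $O(1)$; after that any standard large-deviation estimate suffices (Chernoff as above, Bennett's inequality, or the factorial-moment bound $\P(S\ge k)\le \E[S]^k/k!$ obtained via the Poisson coupling of Lemma~\ref{sec2:degree:seq}). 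The bound is far from tight, but it is more than enough to feed into the requirement~\eqref{bounds:diff:lambda} of Wormald's differential equation method.
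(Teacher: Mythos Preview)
Your proof is correct and follows essentially the same approach as the paper: exploit the uniform bound on the weights to show each degree has mean $O(1)$, apply a standard tail bound to a single vertex, and finish with a union bound over the $n$ vertices. The only cosmetic difference is the choice of tail bound---you invoke the multiplicative Chernoff inequality, whereas the paper uses the elementary counting bound $\P(D^-_i\ge k)\le \binom{n-1}{k}(w^2/n)^k\le w^{2k}/k!$ (which is exactly the factorial-moment bound you mention as an alternative); both give super-polynomial decay and are interchangeable here.
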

\begin{proof}
We shall prove the bound for $D^-$, the bound for $D^+$ is analogue. Let $w:=\max \{\tilde{w}^-_{l_1} , \tilde{w}^+_{l_2} \}$ and observe that
\begin{equation*}
\mathbb{P} (D^-\geq k)\leq n \binom{n-1}{k} \left( \frac{w^2 }{n} \right)^{k} \leq  n\frac{w^{2k}}{k\!!}.
\end{equation*}
The proof is completed by noting that for $k\ge n^{1/100}$ and large $n$ we have $k! \geq n^5 w^{2k}$.
\end{proof}

\section{General Vertex Sequences}\label{sec:approximation}
In this section we prove Theorem~\ref{thm:2}, that is, we show that Theorem~\ref{fin:weight:res} extends to non-finitary vertex sequences. We use the results of the last section to approximate  general vertex sequences by two finitary ones in a tailor-made way. The approximation is such that one sequence ultimately generates a graph that gives a lower bound for the final fraction of infected vertices, while the second sequence generates a graph that provides an upper bound. A sandwich type argument then allows us to determine $|\mathcal{A}_n|$  in Section~\ref{proof:thm:2}. For this argument we first show that the functions defined in (\ref{definition:f}) and (\ref{definition:g}) depend continuously on the random variables involved. For bounded domains, the necessary results are provided by Helly's theorem, which states that if a sequence of distributions $F^i$ converges point-wise to $F$ and the function $h$ is continuous and bounded on some bounded domain $D$, then $\int_{D} h \dd F^i \rightarrow \int_{D} h \dd F$. However, to investigate convergence of the functions defined in (\ref{definition:f}) and (\ref{definition:g}) we are faced with an unbounded domain and unbounded support. Therefore Helly's theorem cannot be applied directly and we use a tailor-made approximation of $F$ that ensures that the integrals over several functions relevant in the following analysis are convergent.

Let $(W^{-},W^{+},C)$ be a random vector with distribution function $F$ fulfilling the properties of Theorem~\ref{thm:2}. To avoid confusion we use the expectation operator $\E$ only with respect to the measure defined by $F$ on $ \mathbb{R}_+ \times \mathbb{R}_+  \times \mathbb{N}_0^\infty$. For the approximating measures we use the integral notation. For the approximation we can restrict to the set $\mathbb{R}_+ \times \mathbb{R}_+  \times \mathbb{N}_0\subset \mathbb{R}_+ \times \mathbb{R}_+  \times \mathbb{N}_0^\infty$, since all involved functions have $\psi_r(x)$ as a factor, which is zero for $r=\infty$. Define the sets
\begin{equation}
\label{eq:Di}
D_{i}:=\{(x,y,l) \in \mathbb{R}_{+} \times \mathbb{R}_{+} \times \mathbb{N}_0 \mid  x, y, l\leq i\},D_{\infty}:= \mathbb{R}_+ \times \mathbb{R}_+ \times \mathbb{N}_0 \;\;\;\text{and} \;\;\; D_{i}^c:=\mathbb{R}_{+}^3\backslash D_{i}.
\end{equation}
\begin{definition}\label{convergence}
Let $H$ be a set of functions such that each $h\in H$ maps from $\mathbb{R} \times \mathbb{R}  \times \mathbb{N}_0$ to $\mathbb{R}$. A sequence $\{F^{i}\}_{i\in \mathbb{N}}$ of distribution functions defined on $\mathbb{R} \times \mathbb{R}  \times \mathbb{N}_0$ is called $F$-convergent with respect to $H$ if each $F^i$ assigns measure to only finitely many values and in addition the following properties hold:
\begin{enumerate}
\item \label{pointwise:conv}$\forall \;  (x,y,l) \in \mathbb{R} \times \mathbb{R}  \times \mathbb{N}_0 , F^{i}(x,y,l) \rightarrow F(x,y,l)$, as $i\rightarrow \infty $.
\item \label{pointwise:unif:approx} Uniformly over $H$:
\begin{enumerate}
\item \label{tail:conv}
$\lim_{i\rightarrow \infty}  \abs{\int_{D^c_{i}} h(x,y,l) \dd F^i (x,y,l)}=0.$
\item \label{int:conv}$\lim_{i\rightarrow \infty}  \abs{\int_{D_{i}} h(x,y,l) \dd F^i (x,y,l)-\int_{D_{i}}  h(x,y,l) \dd F (x,y,l)}=0$.
\end{enumerate}
\end{enumerate}
\end{definition}

Property~\ref{pointwise:conv} is the usual convergence in distribution, while Property~\ref{tail:conv} and \ref{int:conv} ensure that the integral of several functions needed in the following converge. Note that for bounded domains $D\subset D_{\infty}$ and continuous function $h$ it follows that
\begin{equation}
\lim_{i\rightarrow \infty}  \abs{\int_{D} h(x,y,l) \dd F^i (x,y,l)-\int_{D}  h(x,y,l) \dd F (x,y,l)}=0
\end{equation}
by Property~\ref{pointwise:conv} and Helly's theorem. However, the crucial point of  Property~\ref{int:conv} is that the integration domain is becoming larger. In order to understand Property~\ref{tail:conv}, note that for an integrable function $h$ trivially  $\lim_{i\rightarrow \infty}  \abs{\int_{D^c_{i}} h(x,y,l) \dd F (x,y,l)}=0$ holds. Property~\ref{tail:conv} ensures that this convergence holds if $F$ is replaced by $F^i$. It ensures that as $i\rightarrow \infty$ the tail probabilities of the measures implied by $F^i$ are decreasing fast enough for our purpose. 

In the following the functions $F^{i}$ will be the limiting distribution functions of finitary vertex sequences $({\bf w^-}, {\bf w^+}, {\bf c}  )^{i}$ for which Theorem~\ref{fin:weight:res} holds. With increasing integer $i$ the granularity is increasing, that is, there are more weight levels in the sequence and at the same time the approximated range will become larger. The construction of the sequence is done in Section~\ref{seq:construction} and incorporates the set $H$, which will contain functions with unbounded support and therefore Helly's theorem does not allow to conclude convergence of the integral. The assumption that the convergence in Property~\ref{tail:conv} and \ref{int:conv} is uniform will simplify the following analysis. Observe that we do not assume that $\lim_{i\rightarrow \infty}  \abs{\int_{D^c_{i}} h(x,y,l) \dd F (x,y,l)}=0$ uniformly over $H$.
\subsection{Convergence of Some Relevant Functions}\label{sec:conv:funct}
We saw already in Theorem~\ref{fin:weight:res} that the functions $g$ and $f$ defined in (\ref{definition:f}) and (\ref{definition:g}) play a crucial role in determining the final fraction of infected vertices. We show in Proposition~\ref{prop:subseq} that the convergence stated in Definition~\ref{convergence} ensures a certain convergence of these quantities. 

To prove Proposition~\ref{prop:subseq} we need the following lemma.

\begin{lemma}\label{unif:conv}
Let $\{F^{i}\}_{i\in \mathbb{N}}$ be  $F$-convergent with respect to a set of functions $H$ and $\widetilde{H} \subset H$ such that uniformly over $\widetilde{H}$ 
\begin{equation}\label{lemma:uni:ass}
\lim_{i\rightarrow \infty}  \abs{\int_{D^c_{i}} h(x,y,l) \dd F (x,y,l)}=0.
\end{equation}
Then uniformly over $\widetilde{H}$,
\begin{equation}\label{unif:result}
\lim_{i\rightarrow \infty}  \abs{\int_{D_{\infty}} h(x,y,l) \dd F^i (x,y,l)-\int_{D_{\infty}}  h(x,y,l) \dd F (x,y,l)}=0.
\end{equation}
\end{lemma}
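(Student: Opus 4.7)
The plan is to decompose the difference of integrals over $D_\infty$ into three pieces, each controlled by one of the available hypotheses, and then invoke the triangle inequality to obtain the uniform bound.

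First, for each $i$ I would split both integrals using $D_\infty = D_i \cup D_i^c$ and write
\begin{equation*}
\int_{D_\infty} h \, dF^i - \int_{D_\infty} h \, dF = \left(\int_{D_i} h \, dF^i - \int_{D_i} h \, dF\right) + \int_{D_i^c} h \, dF^i - \int_{D_i^c} h \, dF.
\end{equation*}
Applying the triangle inequality yields
\begin{equation*}
\left|\int_{D_\infty} h \, dF^i - \int_{D_\infty} h \, dF\right| \le \left|\int_{D_i} h \, dF^i - \int_{D_i} h \, dF\right| + \left|\int_{D_i^c} h \, dF^i\right| + \left|\int_{D_i^c} h \, dF\right|.
\end{equation*}

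Next I would bound each of the three terms uniformly in $h \in \widetilde H$. The first term tends to $0$ uniformly over $H$, and hence over $\widetilde H \subset H$, by Property~\ref{int:conv} of Definition~\ref{convergence}. The second term tends to $0$ uniformly over $H \supset \widetilde H$ by Property~\ref{tail:conv} of the same definition. The third term tends to $0$ uniformly over $\widetilde H$ precisely by the hypothesis~(\ref{lemma:uni:ass}) placed on the subset $\widetilde H$.

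Given an $\varepsilon > 0$, one then chooses $i_0$ large enough so that each of the three suprema over $\widetilde H$ is at most $\varepsilon/3$ for all $i \ge i_0$; this is possible because each of the three convergences is uniform. Summing yields the claim~(\ref{unif:result}). There is no real obstacle: the whole lemma is just a bookkeeping step that records the interplay between the two uniformity assumptions built into $F$-convergence and the extra uniform-tail assumption on $\widetilde H$, which together replace the integrability-of-$h$-against-$F$ condition that would be needed to apply a direct Helly-type argument on the unbounded domain $D_\infty$.
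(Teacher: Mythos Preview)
Your proof is correct and essentially identical to the paper's own argument: both split $D_\infty = D_i \cup D_i^c$, apply the triangle inequality to obtain the same three terms, and control them respectively via Property~\ref{int:conv}, Property~\ref{tail:conv}, and the hypothesis~(\ref{lemma:uni:ass}). The only difference is cosmetic---you spell out the $\varepsilon/3$ step explicitly, whereas the paper leaves it implicit.
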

\begin{proof}
Let $h \in \widetilde{H} $. By the triangular inequality and since $D_{\infty}=D_{i} \cup D^c_{i},D_{i} \cap D^c_{i}=\emptyset$
\begin{eqnarray}
&&\left\lvert \int_{D_{\infty}}  h(x,y,l) \dd F^{i}(x,y,l) - \int_{D_{\infty}} h(x,y,l) \dd F(x,y,l) \right\rvert \nonumber \\
&\leq & \left\lvert \int_{D_{i}}  h(x,y,l) \dd F^{i}(x,y,l) - \int_{D_{i}} h(x,y,l) \dd F(x,y,l) \right\rvert + \left\lvert \int_{D^c_{i}}  h(x,y,l) \dd F^{i}(x,y,l)  \right\rvert \label{1:Lemma}\\
& +& \left\lvert\int_{D^c_{i}} h(x,y,l) \dd F(x,y,l) \right\rvert \label{2:Lemma},
\end{eqnarray}
where the terms in (\ref{1:Lemma}) converge to zero uniformly over $\widetilde{H}$  since $\widetilde{H} \subset H$ and $\{F^{i}\}_{i\in \mathbb{N}}$ is  $F$-convergent with respect to the set $H$. The term in (\ref{2:Lemma}) converges uniformly by Assumption (\ref{lemma:uni:ass}). This implies (\ref{unif:result}).
\end{proof}
If Condition~\ref{cond:2} of Theorem~\ref{thm:2} holds, that is, there exists $\delta$ such that 
\begin{equation}
\E [ W^+ W^- \P (\poi (z W^-)=C-1)\1_{C\geq 1} ] < \kappa <  1
\end{equation}
for $z \in (\hat{z}-\delta,\hat{z} + \delta)$, let $H$ be defined by
\begin{equation*}
H:=\left( \cup_{z\in [0,2\hat{z}]} h_{1,z}(x,y,l)\right) \cup \left( \cup_{z\in [0,2\hat{z}]} h_{2,z}(x,y,l)\right) \cup \left( \cup_{z\in (\hat{z} - \delta, \hat{z} + \delta )} h_{3,z}(x,y,l)\right)
\end{equation*}
and otherwise we define $H$ by
\begin{equation}
H:=\left( \cup_{z\in [0,2\hat{z}]} h_{1,z}(x,y,l)\right) \cup \left( \cup_{z\in [0,2\hat{z}]} h_{2,z}(x,y,l)\right),
\end{equation}
where the functions $h_{1,z}$ and $h_{2,z}$ are defined by $h_{1,z} (x,y,l):=y \psi_l( x z )$ and $h_{2,z} (x,y,l):= \psi_l( x z )$ respectively.  
Additionally, $h_{3,z}$ is defined by $h_{3,z} (x,y,l):= xy \P (\poi (\hat{z} x)=l-1) $ for $z\in (\hat{z}-\delta,\hat{z}+\delta )$. 

Let $\{F^{i}\}_{i\in \mathbb{N}}$ be  $F$-convergent with respect to the set of functions $H$. To shorten notation we set
\begin{eqnarray*}
f^{i}(z)&:=&f(z;(W^{-},W^{+}, C)^i) \quad \text{and} \quad f (z):=f (z;(W^{-},W^{+}, C))
\end{eqnarray*}
and $g^{i}(y)$ and $g (y)$ accordingly, where $(W^{-},W^{+}, C)^i$ is a random vector with distribution function $F^i$ and $(W^{-},W^{+}, C)$ a random vector with distribution function $F$.\footnote{Since the sequence index $i$ is attached to the entire vector and not to each component, our notation $(W^{-},W^{+}, C)^i$ gives rise to ambiguity in statements about single components of $(W^{-},W^{+}, C)^i$. However, in the following, we shall only consider the entire vector and there is no risk of confusion.}

The following proposition provides the necessary convergence properties of the functions $f$ and $g$ for a $F-$convergent sequence with respect to $H$. The construction of such a sequence is outlined in Section~\ref{seq:construction}.
\begin{proposition}\label{prop:subseq}
Let $\{F^i\}_{i\in \mathbb{N}}$ be $F$-convergent with respect to the set $H$ defined above. Let $\hat{z}$ be the smallest positive zero of $f$ and $\hat{z}^{i}$ the smallest positive zero of $f^{i}(z)$. Then,
\begin{eqnarray}
\liminf_{i\rightarrow \infty}  \hat{z}^{i} & \geq &  \hat{z} \label{lower:bounds:1}\\
\liminf_{i \rightarrow \infty } g^{i} ( \hat{z}^{i} ) & \geq & g(\hat{z} ).\label{lower:bounds:2}
\end{eqnarray}
Furthermore, if Condition~\ref{cond:2} of Theorem~\ref{thm:2} holds, that is, there exists $\delta>0$ such that 
\begin{equation}\label{diff:condi:prop}
\E [ W^+ W^- \P (\poi (z W^-)=C-1)\1_{C\geq 1} ] < \kappa <  1
\end{equation}
for $z\in (\hat{z} - \delta , \hat{z} + \delta )$
, then $f^{'i}(z_i)< 0 $ and
\begin{eqnarray*}
\lim_{i\rightarrow \infty}  \hat{z}^{i} & = &  \hat{z} \quad \text{and} \quad \lim_{i \rightarrow \infty } g^{i} ( \hat{z}^{i} )  = g(\hat{z} ).
\end{eqnarray*}
\end{proposition}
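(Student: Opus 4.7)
The plan is to first upgrade the $F$-convergence hypothesis into uniform convergence $f^{i}\to f$ and $g^{i}\to g$ on the compact interval $[0,2\hat z]$ via Lemma~\ref{unif:conv}. For the subfamilies $\{h_{1,z}:z\in[0,2\hat z]\}$ and $\{h_{2,z}:z\in[0,2\hat z]\}$ of $H$, the pointwise bounds $h_{1,z}\le W^{+}$ (with $\E[W^{+}]<\infty$) and $h_{2,z}\le 1$ yield $z$-uniform control of the $F$-tails $\int_{D_{i}^{c}}y\,dF\to 0$ and $F(D_{i}^{c})\to 0$, so Lemma~\ref{unif:conv} delivers uniform convergence of $f^{i}$ to $f$ and of $g^{i}$ to $g$ on $[0,2\hat z]$.

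For the bound~\eqref{lower:bounds:1}, I would argue by contradiction: if some subsequence satisfies $\hat z^{i_k}\to z^{*}<\hat z$, the uniform convergence together with continuity of $f$ give $0=f^{i_k}(\hat z^{i_k})\to f(z^{*})$, hence $f(z^{*})=0$. The weight lower bound $w_{0}>0$ from Definition~\ref{weight:assump} and $\P(C=0)>0$ force $f(0)=\E[W^{+}\1_{\{C=0\}}]\ge w_{0}\P(C=0)>0$, so $z^{*}>0$; then $z^{*}\in(0,\hat z)$ contradicts $\hat z$ being the smallest positive zero of $f$. The bound~\eqref{lower:bounds:2} then follows from the $z$-monotonicity of $g^{i}$ and $g$: given $\varepsilon>0$, eventually $\hat z^{i}\ge\hat z-\varepsilon$, so $g^{i}(\hat z^{i})\ge g^{i}(\hat z-\varepsilon)\to g(\hat z-\varepsilon)$ by uniform convergence, and $\varepsilon\downarrow 0$ combined with continuity of $g$ concludes.

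Under~\eqref{diff:condi:prop} I would then establish the matching upper bound $\limsup_{i}\hat z^{i}\le\hat z$. Using $\tfrac{d}{dx}\psi_{l}(x)=\P(\poi(x)=l-1)$ for $l\ge 1$ and Fubini, I represent
\begin{equation*}
f(z)=f(0)+\int_{0}^{z}\bigl(\E[W^{+}W^{-}\P(\poi(sW^{-})=C-1)\1_{\{C\ge 1\}}]-1\bigr)\,ds,
\end{equation*}
and analogously for $f^{i}$ with $F^{i}$ in place of $F$. Hypothesis~\eqref{diff:condi:prop} bounds the integrand by $\kappa-1<0$ on $(\hat z-\delta,\hat z+\delta)$, giving $f(\hat z+\varepsilon)\le\varepsilon(\kappa-1)<0$ for any $\varepsilon\in(0,\delta\wedge\hat z)$. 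Uniform convergence on $[0,2\hat z]$ then yields $f^{i}(\hat z+\varepsilon)<0$ for large $i$; since $f^{i}(0)\to f(0)>0$, the intermediate value theorem forces $\hat z^{i}\le\hat z+\varepsilon$, so $\hat z^{i}\to\hat z$. The statement $f'^{\,i}(\hat z^{i})<0$ follows from a second application of Lemma~\ref{unif:conv}, now to $\{h_{3,z}\}$, yielding $\int h_{3,z}\,dF^{i}\to\int h_{3,z}\,dF\le\kappa$ uniformly in $z\in(\hat z-\delta,\hat z+\delta)$, so once $\hat z^{i}$ enters this interval one has $f'^{\,i}(\hat z^{i})\le\kappa'-1<0$ for some $\kappa'<1$. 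Finally $g^{i}(\hat z^{i})\to g(\hat z)$ is immediate from the triangle inequality $|g^{i}(\hat z^{i})-g(\hat z)|\le|g^{i}(\hat z^{i})-g(\hat z^{i})|+|g(\hat z^{i})-g(\hat z)|$ together with uniform convergence and continuity of $g$.

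The main obstacle I anticipate is verifying the $F$-tail hypothesis of Lemma~\ref{unif:conv} for the family $\{h_{3,z}\}$: the naive pointwise bound $xy$ is generally not $F$-integrable (this is exactly the regime~\eqref{diff:condi:prop} is designed to handle), so one must exploit the Poisson damping $\P(\poi(zx)=l-1)$ together with the $z$-uniform bound $\int h_{3,z}\,dF\le\kappa$ to secure uniform tail decay; the remaining steps are IVT, Fubini, and monotonicity bookkeeping.
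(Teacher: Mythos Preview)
Your approach is essentially correct and follows the same architecture as the paper: upgrade $F$-convergence to uniform convergence of $f^{i}\to f$ and $g^{i}\to g$ on $[0,2\hat z]$ via Lemma~\ref{unif:conv} applied to $\{h_{1,z}\}$ and $\{h_{2,z}\}$, then deduce convergence of the zeros and of $g^{i}(\hat z^{i})$. Your argument for the lower bound~\eqref{lower:bounds:1} (subsequence contradiction plus $f(0)>0$) and for~\eqref{lower:bounds:2} (monotonicity of $g^{i}$) matches the paper's.

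Your route to the upper bound $\limsup_{i}\hat z^{i}\le\hat z$ under~\eqref{diff:condi:prop} is in fact slightly more direct than the paper's: you use the Fubini/weak-derivative representation to get $f(\hat z+\varepsilon)<0$, then uniform convergence plus IVT immediately gives $\hat z^{i}\le\hat z+\varepsilon$. The paper instead first establishes the derivative bound $f'^{\,i}<0$ on $(\hat z-\delta,\hat z+\delta)$ and uses it together with $f^{i}(\hat z-\delta_{0})>0$ to locate a zero of $f^{i}$ inside that interval. Both arguments work; yours decouples the convergence $\hat z^{i}\to\hat z$ from the derivative statement.

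The obstacle you flag for $\{h_{3,z}\}$ is real, but your proposed resolution---verifying the $F$-tail hypothesis~\eqref{lemma:uni:ass} of Lemma~\ref{unif:conv}---is the wrong way around and may not be feasible in general (the bound $\int h_{3,z}\,dF\le\kappa$ does not by itself yield uniform tail decay $\sup_{z}\int_{D_{i}^{c}}h_{3,z}\,dF\to 0$). The paper bypasses Lemma~\ref{unif:conv} entirely here: one does not need $\int_{D_{\infty}}h_{3,z}\,dF^{i}\to\int_{D_{\infty}}h_{3,z}\,dF$, only $\int_{D_{\infty}}h_{3,z}\,dF^{i}<1$ for large $i$. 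Write
\[
\int_{D_{\infty}}h_{3,z}\,dF^{i}=\int_{D_{i}}h_{3,z}\,dF^{i}+\int_{D_{i}^{c}}h_{3,z}\,dF^{i},
\]
and use Property~\ref{int:conv} of $F$-convergence on the first piece and Property~\ref{tail:conv} on the second; since $h_{3,z}\ge 0$ one has $\int_{D_{i}}h_{3,z}\,dF\le\int_{D_{\infty}}h_{3,z}\,dF\le\kappa$, which gives the required bound $\kappa+\delta_{1}+\delta_{2}<1$ uniformly in $z\in(\hat z-\delta,\hat z+\delta)$. With this correction your proof goes through.
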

\begin{proof}
We prove the claim if Condition~\ref{cond:2} of Theorem~\ref{thm:2} holds. The lower bounds in (\ref{lower:bounds:1}) and (\ref{lower:bounds:2}) without this assumptions can be shown by similar means, observing that the functions $g^i$ are monotonically increasing in $z$. 

First observe that since $f(z)>0$ for $z\in (0,\hat{z}-\delta )$ we can chose $\delta_0$ as small as we like such that 
\begin{equation*}
f ( \hat{z} - \delta_0 )>0,
\end{equation*}
We apply Lemma~\ref{unif:conv} with the set $\widetilde{H}:= \{ h_{1,z}\}_{z\in [0,2 \hat{z}]}$, where condition (\ref{lemma:uni:ass}) can be easily seen to be satisfied. 
By the definition of $h_{1,z}$ we can conclude that 
\begin{equation*}
\lim_{i \rightarrow \infty } \sup_{z \in [0, 2 \hat{z}]} \abs{f^{i}(z) - f(z) }=\lim_{i \rightarrow \infty } \sup_{z \in [0, 2 \hat{z}]} \abs{(f^{i}(z) +z) - (f(z)+z) }=0. \label{est:2}
\end{equation*}

This observation especially implies that 
\begin{equation*}
2 f ( \hat{z} - \delta_0 ) > f^{i}( \hat{z} - \delta_0 )>0,
\end{equation*}
for $i\geq i_0$.
Furthermore,
\begin{eqnarray}
& & \abs{ \int_{D_{\infty}}xy \P (\poi (z x)=l-1) \dd F^i (x,y,l)} \label{neg:fix:approx}  \\
& =& \abs{ \int_{D_{i}}xy \P (\poi (z x)=l-1) \dd F^i (x,y,l)}+\abs{ \int_{D^c_{i}}xy \P (\poi (z x)=l-1) \dd F^i (x,y,l)} \nonumber \\  \nonumber
\end{eqnarray}
since the integrand is positive. By Assumption (\ref{diff:condi:prop}) there exists a neighborhood $(\hat{z}-\delta, \hat{z} + \delta )$ such that 
\begin{equation}\label{diff:cand:small}
\int_{D_{i}}xy \P (\poi (z x)=l-1) \dd F (x,y,l)\leq \int_{D_{\infty}}xy \P (\poi (z x)=l-1) \dd F (x,y,l) \leq \kappa < 1
\end{equation}
for $z \in  (\hat{z}-\delta, \hat{z} + \delta )$.
Choose now $\delta_1 , \delta_2 >0$ with $\kappa + \delta_1 + \delta_2 < 1$ and $i_1$ such that 
\begin{equation}
\abs{ \int_{D^c_{i}} h(x,y,l) \dd F^i (x,y,l)} < \delta_1
\end{equation}
for each $i\geq i_1$ and $h \in H$. This $i_1$ exists by the Property~\ref{tail:conv} of a $F$-convergent sequence. Further, choose $i_2$ such that 
\begin{equation}
\int_{D_{i}}xy \P (\poi (z_i x)=l-1) \dd F^i (x,y,l) \leq \kappa + \delta_2,
\end{equation}
which exists by Property~\ref{int:conv} of a $F$-convergent sequence and observation (\ref{diff:cand:small}).
This implies that for $i\geq \max \{i_1,i_2 \}$, (\ref{neg:fix:approx}) is bounded by $\kappa + \delta_1 + \delta_2 < 1$. Note that $i_1$ and $i_2$ do not depend on the $\delta_0$ above. Differentiating $f^{i}(z)$, where differentiation under the integral sign is justified by the fact that $F^i$ assigns measure only to finitely many values, implies that 
$f^{' i}(z)< \kappa + \delta_1 + \delta_2 -1 < 0 $
for $ i\geq \max \{i_1,i_2\}$ for $z \in  (\hat{z}-\delta, \hat{z} + \delta )$. 

Now chose $\delta_0<\delta $ such that $-f(\hat{z}-\delta_0)/(\kappa + \delta_1 + \delta_2 - 1)<\delta$, which exists by continuity of $f$ and $f(\hat{z})=0$ and define $\bar{\delta}_0:=-f(\hat{z}-\delta_0)/(\kappa + \delta_1 + \delta_2 - 1)$, then 
\begin{eqnarray}
f^{i}(\hat{z} - \delta_0 + \bar{\delta}_0)& \leq & f(\hat{z} - \delta_0) + \max_{\{ z\in (\hat{z}-\delta,\hat{z}+\delta )\} } \{f^{' i}(z)\}\bar{\delta}_0 \\
& \leq & f(\hat{z} - \delta_0) + (\kappa + \delta_1 + \delta_2 - 1) \bar{\delta}_0 \leq 0
\end{eqnarray}
by the choice of $\bar{\delta_0 }$ and $f$ has a zero in $(\hat{z}-\delta_0, \hat{z}-\delta_0 + \bar{\delta}_0)\subset (\hat{z}-\delta, \hat{z}+\delta )$. Denote by $\hat{z}^{i}$ the first zero of $f^{i}( z )$.
We need to show that $\lim_{i \rightarrow } \hat{z}^{i} =\hat{z}$. Assume for the sake of contradiction that there exists a sub-sequence $\{k_i\}$ with $\hat{z}^{k_i}\in [0,\hat{z} - \delta_0]$. Since $[0,\hat{z} - \delta_0]$ is compact there exists a limit point $\bar{z}$ approached by some subsub-sequence $\{l_{k_i}\}$ such that $\lim_{i\rightarrow \infty} \hat{z}^{l_{k_i}}=\bar{z}\leq \hat{z} - \delta_0$. By continuity, the function $f$ attains its minimum $M$ on $[0,\hat{z}- \delta_0]$. However, by observation (\ref{est:2}), since $f$ is continuous and by
\begin{eqnarray}
& & \abs{f^{l_{k_i}} (\hat{z}^{l_{k_i}})-f(\bar{z})}= \abs{(f^{l_{k_i}} (\hat{z}^{l_{k_i}})-f (\hat{z}^{l_{k_i}}))+(f (\hat{z}^{l_{k_i}}) -f(\bar{z}))} \nonumber \\
& \leq &  \abs{f^{l_{k_i}} (\hat{z}^{l_{k_i}})-f (\hat{z}^{l_{k_i}})} +\abs{f (\hat{z}^{l_{k_i}}) -f(\bar{z})}\nonumber 
\end{eqnarray}
 it follows that 
\begin{equation*}
\lim_{i \rightarrow \infty }\abs{f^{l_{k_i}} (\hat{z}^{l_{k_i}})-f(\bar{z})}=0.
\end{equation*}
On the other hand, however, $\abs{f^{l_{k_i}} (\hat{z}^{l_{k_i}})-f(\bar{z})} =f(\bar{z}) \geq M$, providing the contradiction. Since $\delta_0$ can be chosen arbitrarily small it follows that 
\begin{equation*}
\lim_{i \rightarrow \infty }\hat{z}^{i}=\hat{z}.
\end{equation*}
Further, since $\hat{z}^{i}\in (\hat{z}-\delta ,\hat{z}+\delta)$ for $i$ large, it also follows that $f^{' i}(\hat{z}^{i})<0)$ for $i$ large.
Again, applying Lemma~\ref{unif:conv} with the set $\widetilde{H}:= \{ h_{2,z}\}_{z\in [0,2 \hat{z}]}$ shows that $\lim_{i\rightarrow \infty }\abs{g^{i}(z) - g(z)} =0$ uniformly over $z\in [0,2 \hat{z}]$. Together with $\lim_{i \rightarrow \infty }\hat{z}^{i}=\hat{z}$ and the continuity of $g$, this implies that
\begin{equation}
\lim_{i \rightarrow \infty }\abs{g^{i}(z_i) - g(\hat{z})} \leq \lim_{i \rightarrow \infty } \left(\abs{g^{i}(z_i) - g(z_i)} + \abs{g(z_i) - g(\hat{z})}\right)=0. \label{est:1}
\end{equation}
\end{proof}

\subsection{Constructing a $F$-convergent Sequence}\label{seq:construction}
The following lemma is crucial in our construction of $F$-convergent sequences.
\begin{lemma}\label{convergence:integral}
Let $h$ be a function defined on $[z_{min},z_{max}] \times \mathbb{R}  \times \mathbb{R}\times \mathbb{N}_0$, such that for each $l\in \mathbb{N}_0$, the function $h_l(z,x,y):=h(z,x,y,l)$ is continuous on $[z_{min},z_{max}] \times \mathbb{R}  \times \mathbb{R}$. Let $D\subset \mathbb{R} \times \mathbb{R} \times \mathbb{N}_0$ be a closed, interval set 
 and $\{G_i\}_{i\in \mathbb{N}}$ a sequence of distribution functions on $\mathbb{R} \times \mathbb{R} \times \mathbb{N}_0$ such that for each $(x,y,l)\in D, \lim_{i \rightarrow \infty} F_i (x,y,l) = F (x,y,l)$. Then for every $\varepsilon >0$, there exists $i_0\in \mathbb{N}$ such that for each $i\geq i_0$
\begin{equation}\label{integral:convergence}
\abs{\int_{D} h(z,x,y,l) \dd F^{i}(x,y,l) - \int_{D} h(z,x,y,l) \dd F(x,y,l)} \leq \varepsilon,
\end{equation}
for every $z\in [z_{min},z_{max}]$.
\end{lemma}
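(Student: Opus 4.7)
The plan is to use the compactness of the integration domain and uniform continuity of $h_l$ to reduce the problem to a finite number of applications of Helly's theorem.

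First, since $D\subset \mathbb{R}\times\mathbb{R}\times\mathbb{N}_0$ is a closed interval set, it decomposes as $D = K\times L$ where $K\subset \mathbb{R}\times\mathbb{R}$ is a compact rectangle and $L = \{l_0, l_0+1,\dots,l_1\}\subset \mathbb{N}_0$ is a finite set. For each $l\in L$ the function $h_l$ is continuous on the compact set $[z_{\min},z_{\max}]\times K$, hence uniformly continuous. So given $\varepsilon>0$, there exists $\delta>0$ such that for all $l\in L$,
\begin{equation*}
|z-z'|<\delta \ \Longrightarrow\ \sup_{(x,y)\in K}|h_l(z,x,y)-h_l(z',x,y)| < \varepsilon/3.
\end{equation*}
Partition $[z_{\min},z_{\max}]$ into finitely many subintervals $J_1,\dots,J_m$ of length less than $\delta$, and pick representatives $\tilde z_k\in J_k$ for $k=1,\dots,m$.

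Second, for each fixed pair $(\tilde z_k, l)$ with $k\le m, l\in L$, the map $(x,y)\mapsto h_l(\tilde z_k,x,y)$ is continuous and bounded on the compact set $K$. Since $F^i\to F$ pointwise on $D$ and $D$ has compact $(x,y)$-part, a standard application of Helly's theorem (separately for each of the finitely many values of $l$) yields
\begin{equation*}
\lim_{i\to\infty}\int_D h(\tilde z_k,x,y,l)\,\mathrm{d}F^i(x,y,l) = \int_D h(\tilde z_k,x,y,l)\,\mathrm{d}F(x,y,l).
\end{equation*}
Because there are only finitely many pairs $(k,l)$, I can choose $i_0$ such that for every $i\ge i_0$ and every such pair the difference above is bounded by $\varepsilon/3$.

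Finally, for an arbitrary $z\in [z_{\min},z_{\max}]$, pick $k$ with $z\in J_k$ so that $|z-\tilde z_k|<\delta$. Applying the triangle inequality and using the bound on the total masses $F^i(D), F(D)\le 1$ together with uniform continuity,
\begin{align*}
&\Bigl|\int_D h(z,\cdot)\,\mathrm{d}F^i - \int_D h(z,\cdot)\,\mathrm{d}F\Bigr| \\
&\quad\le \int_D |h(z,\cdot)-h(\tilde z_k,\cdot)|\,\mathrm{d}F^i + \Bigl|\int_D h(\tilde z_k,\cdot)\,\mathrm{d}F^i - \int_D h(\tilde z_k,\cdot)\,\mathrm{d}F\Bigr| + \int_D|h(\tilde z_k,\cdot)-h(z,\cdot)|\,\mathrm{d}F \\
&\quad\le \varepsilon/3 + \varepsilon/3 + \varepsilon/3 = \varepsilon,
\end{align*}
uniformly in $z$, which yields the claim.

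The only real subtlety lies in ensuring that Helly's theorem can be invoked despite the discrete third coordinate; this is resolved by using that $L$ is finite and handling each $l\in L$ separately. The main conceptual step is the uniform-continuity-in-$z$ argument, which converts uniform convergence over the uncountable family $\{h(z,\cdot)\}_{z\in[z_{\min},z_{\max}]}$ into convergence over the finite family $\{h(\tilde z_k,\cdot,\cdot,l)\}_{k\le m, l\in L}$ at a controlled approximation cost.
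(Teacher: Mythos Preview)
Your proof is correct and follows essentially the same approach as the paper's: both exploit compactness of $[z_{\min},z_{\max}]\times D$ to reduce, via uniform continuity, to finitely many pieces where pointwise convergence $F^i\to F$ suffices. The only cosmetic difference is that you invoke Helly's theorem as a black box for the $(x,y)$-integration at each fixed $\tilde z_k$, whereas the paper carries out the step-function approximation in $(x,y)$ explicitly as well, reducing directly to convergence of $\mu^i(I_j)\to\mu(I_j)$ on finitely many rectangles.
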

\begin{proof}
Choose a step function $\hat{h}(z,x,y,l)$ with $l m$ steps $h_{k,j}, 0 \leq j \leq l, 0 \leq k \leq m$ such that
\begin{eqnarray*}
\hat{h}(z,x,y,l)=h_{k,j}, \text{ for } (z,x,y,l) \in J_k\times I_j
\end{eqnarray*}
where $I_j$ are equally sized interval sets with $D= \cup_j I_j$ and $J_k$ equally sized intervals with $[z_{min},z_{max}]= \cup_j J_k$
such that $\abs{\hat{h}(z,x,y,l)- h(z,x,y,l) }\leq \varepsilon/3$ on $D$ for every $z\in [z_{min},z_{max}]$. This choice is possible due to the fact that both, $D$ and $[z_{min},z_{max}]$ bounded and closed and $h_l(z,x,y)$ continuous for each $l$. Then $\forall i$
\begin{equation*}
\abs{\int_{D} \hat{h}(z,x,y,l) - h(z,x,y,l)  \dd F^{i}(x,y,l) } \leq \varepsilon/3, 
\end{equation*}
and
\begin{equation*}
\abs{\int_{D} \hat{h} (z,x,y,l) - h(z,x,y,l)  \dd F(x,y,l) } \leq \varepsilon/3.
\end{equation*}
Observe that for $z\in J_k$
\begin{eqnarray*}
&&\int_{D} \hat{h} (z,x,y,l) \dd F^{i}(x,y,l) - \int_{D} \hat{h}(z,x,y,l) \dd F(x,y,l) = \sum_j h_{k,j} (\mu^i (I_j) -\mu (I_j) ),
\end{eqnarray*}
where $\mu^i$ and $\mu$ are the measures implied by $F^i$ and $F$ respectively. Since $\lim_{i\rightarrow \infty } \mu^i (I_j)=\mu (I_j) $, by the fact that the $I_j$ are intervals (i.e. the measures $\mu^i (I_j)$ and $\mu (I_j)$ are determined by the values of $F^i$ and $F$ at the endpoints of $I_j$), there exists $i_0$ such that for $i\geq i_0$
\begin{equation*}
\sum_j h_{k,j} (\mu^i (I_j) -\mu (I_j) ) \leq \varepsilon /3.
\end{equation*}
By the triangular inequality it can be easily seen that (\ref{integral:convergence}) holds for $i\geq i_0$.
\end{proof}
Clearly one can chose $k_i$ such that the last lemma holds for a finite set of functions $H$, where each $h\in H$ fulfills the assumptions of the proposition, an observation we shall use in our construction below.

In a first step we construct two $F$-convergent sequences of finitary distribution functions $\{F^{i}_A\}_{i\in \mathbb{N}} $ and $\{F^{i}_B\}_{i\in \mathbb{N}} $ with respect to a given set $H$ of functions. One sequence is such that it generates random graph with asymptotically less infections than in a random graph with limiting distribution $G$ and the second with asymptotically more infections. In a second step we couple a vertex sequence $({\bf w^-}, {\bf w^+}, \bf c  )$ with limiting distribution $F$ to sequences with limiting distribution $F^{i}_A $ and $F^{i}_B$ for each $i\in \mathbb{N}$. This coupling allows for a sandwich type argument in the proof of Theorem~\ref{thm:2}.  For simplicity we pose the following restriction on the limiting distribution of the vertex sequence. The restriction is not mandatory but simplifies the exposition.
\begin{assumption}\label{ass:distrib:fct}
The limiting distribution $F: \mathbb{R} \times \mathbb{R}  \times \mathbb{N}_0^\infty \rightarrow [0,1]$ of the regular vertex sequence $({\bf w^-}, {\bf w^+}, \bf c  )$ is such that for each $l$ the function $F_l(x,y):=F(x,y,l)$ is continuous.
\end{assumption}

Defining a $F$-convergent sequence of finitary distribution functions is rather straightforward under Assumption~\ref{ass:distrib:fct}, but we need a very particular sequence later in the proof of Theorem~\ref{thm:2}.
For this let $H$ be the set of functions defined in Section~\ref{sec:conv:funct} and observe that each $h\in H$ is such that $h_l(x,y):=h(x,y,l)$ is continuous for each $l$. 
For a given $i \in \mathbb{N}$ we determine two distribution functions $F^{i}_A (x,y,l)$ and $F^{i}_B (x,y,l)$ for random vectors $(W^{-},W^{+}, C )_A^{i}$ and $(W^{-},W^{+}, C )_B^{i}$, respectively. The random vector $(W^{-},W^{+}, C )_A^{i}$ is constructed in a way such that 
random graphs approximating $(W^{-},W^{+}, C )_A^{i}$ have more edges and lower threshold values, and in turn allows to approximate the number of infected vertices from above. 
The random vector $(W^{-},W^{+}, C )_B^{i}$ is constructed such that a random graph with limiting distribution $(W^{-},W^{+}, C )_B^{i}$ will have fewer infections than a random graph approximating $(W^{-},W^{+}, C)$ and can be used to approximate the number of infected vertices from below.

For each $i\in \mathbb{N}$ we partition $[x_0 , i]\times [x_0 ,i]$ into $L (i)  L (i)$ equally spaced half open squares $D^i_{k,j}:=[p_k,p_{k+1})\times [p_j,p_{j+1}),1\leq j,k\leq L(i)$, where $L(i)$ is chosen such that $\forall h \in H$
\begin{equation}\label{conv:set:cond}
\abs{\int_{D_{i}} h(x,y,l) \dd \tilde{F}(x,y,l) - \int_{D_{i}} h(x,y,l) \dd F(x,y,l)} \leq 1/i, \text{ for } \tilde{F}\in \{\tilde{F}^i_B, \tilde{F}^i_A \}, 
\end{equation}
where $\tilde{F}_A^{i}$ and $\tilde{F}_B^{i}$ are defined on $D_i$ by
\begin{eqnarray}
\bar{F}_A^{i}(x,y,l)&:=&F(p_{k},p_{j},l)\;\text{if}\;(x,y)\in D^i_{k,j}\\
\bar{F}_B^{i}(x,y,l)&:=&F(p_{k+1},p_{j+1},l)\;\text{if}\;(x,y)\in D^i_{k,j}.
\end{eqnarray}
To see that this choice is possible one can start with sequences $\{\tilde{F}_A^{i,m}\}_{m\in \mathbb{N}}$ and $\{\tilde{F}_B^{i,m}\}_{m\in \mathbb{N}}$, which are defined by partitioning into $mm$ half open cubes for each $l$, and chose $L(i)$ to be the smallest natural number such that (\ref{conv:set:cond}) is fulfilled. To guarantee the existence of such an $L(i)$ one can use Lemma~\ref{convergence:integral} for each of the families $\{h_{i,q}\}, q \in \{1,2,3 \}$) and chose the maximum in an obvious way.

Define the function $\gamma : \mathbb{N} \rightarrow \mathbb{R}_{+}$ by
\begin{equation}
\gamma (i) = \P \left( \{ W^+ > i \} \cup \{ W^- > i \} \cup \{ \infty > C > i \}\right).
\end{equation}
Observe that $\gamma (i)\leq \P (W^+ > i ) +  \P (W^- > i ) + \P (\infty >C > i )$ and $\lim_{i \rightarrow \infty} \gamma (i) =0$.

Now define distribution functions $\{F^{i}_B\}_{i \in \mathbb{N}}$ and $\{F^{i}_A\}_{i \in \mathbb{N}}$ by
\begin{equation}\label{tmp1}
	F^{i}_B(x,y,l):=\left\{
	\begin{array}{ll}
	 0 & \mbox{if } (x<x_0)\vee (y<x_0) \\
	 	\tilde{F}_B^{i}(\min \{x , i\},\min \{y , i\},\min \{ l , i\}) & \mbox{if }   (l<\infty)\wedge ((x\geq x_0)\wedge (y\geq x_0))  \\	
	1  & \mbox{if }  (l=\infty) \wedge ((x\geq x_0)\wedge (y\geq x_0)), \\
	\end{array}
\right.
\end{equation}
where $\wedge$ denotes logical \textit{and} and $\vee $ logical \textit{or}.

To specify $F^i_A$, we determine for each $i$ a value $\bar{w}^+_i$ such that 
\begin{equation}
\bar{w}^+_i \gamma (i) =2 \int_{D_i^c} y \dd F (x,y,l),
\end{equation}
and observe that $\lim_{i\rightarrow \infty }\bar{w}^+_i \gamma (i)=0$. Define
\begin{equation}
	F^{i}_A(x,y,l):=\left\{
	\begin{array}{ll}
	  0 & \mbox{if } (x<x_0)\vee (y<x_0) \\
	 	\tilde{F}_A^{i}(\min \{x , i\},\min \{y , i\},\min \{ l , i\}) & \mbox{if }   (l<\infty)\wedge ((x\geq x_0)\wedge (\bar{w}^+_i \geq y\geq x_0))  \\
	 	\gamma (i)+ \tilde{F}_A^{i}(\min \{x , i\},i,\min \{l , i\}) & \mbox{if }  (l<\infty ) \wedge (x\geq x_0) \wedge (y\geq \bar{w}^+_i ) \\
	1  & \mbox{if }  (l=\infty ) \wedge (x\geq x_0)\wedge (y\geq x_0) .
	\end{array}
\right. 
\end{equation}

\begin{proposition}
The two sequences of distributions $\{F^i_A \}_{i \in \mathbb{N}}$ and $\{F^{i}_B \}_{i \in \mathbb{N}}$ are $F-$convergent with respect to the set $H$.
\end{proposition}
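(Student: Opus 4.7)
The plan is to verify the three defining properties of $F$-convergence in Definition~\ref{convergence} in sequence for each of $F^i_A$ and $F^i_B$, exploiting that both are small perturbations of the step approximants $\tilde F^i_A$, $\tilde F^i_B$ of $F$ on $D_i$. Finitariness is immediate, since each $F^i$ puts positive mass only on the finitely many grid corners $(p_k,p_j)$, together with (for $F^i_A$) one extra atom at $y=\bar w_i^+$ and (for $F^i_B$) the atom at $l=\infty$.

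For pointwise convergence (Property~\ref{pointwise:conv}) I fix $(x,y,l)\in\mathbb{R}_+\times\mathbb{R}_+\times\mathbb{N}_0$, note that $(x,y,l)\in D_i$ for all large $i$, and combine the refinement of the partition ($L(i)\to\infty$ with vanishing cell widths) with Assumption~\ref{ass:distrib:fct} to obtain $\tilde F^i_A(x,y,l),\tilde F^i_B(x,y,l)\to F(x,y,l)$. Because $\gamma(i)\to 0$, the extra $\gamma(i)$-atom in $F^i_A$ does not affect the limit.

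Property~\ref{tail:conv} is immediate for $F^i_B$: all of its tail mass is placed at $l=\infty$, where every $h\in H$ vanishes identically (each $h\in H$ factors through $\psi_l$, which is zero at $l=\infty$). For $F^i_A$, inspecting the CDF jump at $y=\bar w_i^+$ shows that the extra atom decomposes as a Dirac of mass $\gamma(i)$ at $(x_0,\bar w_i^+,0)$ plus a transferred contribution from $\tilde F^i_A$-mass originally at $y\in(\bar w_i^+,i]$. The tail integral (active only when $\bar w_i^+>i$, in which case only the Dirac piece contributes) is therefore $\bar w_i^+\gamma(i)$ for $h_{1,z}$, $\gamma(i)$ for $h_{2,z}$, and $0$ for $h_{3,z}$ (since $\P(\poi(\hat z x_0)=-1)=0$); all three vanish as $i\to\infty$, uniformly in $z$ on the respective compact parameter intervals.

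For Property~\ref{int:conv}, condition~\eqref{conv:set:cond} built into the choice of $L(i)$ already gives $|\int_{D_i}h\,\dd\tilde F^i-\int_{D_i}h\,\dd F|\le 1/i$ uniformly in $h\in H$, with uniformity in the continuous parameter $z$ across $[0,2\hat z]$ and (a compact subinterval of) $(\hat z-\delta,\hat z+\delta)$ supplied by Lemma~\ref{convergence:integral} applied to each of the three families $\{h_{1,z}\}$, $\{h_{2,z}\}$, $\{h_{3,z}\}$. The remaining discrepancy $\int_{D_i}h\,\dd F^i-\int_{D_i}h\,\dd\tilde F^i$, which arises when $\bar w_i^+\le i$ from the extra atom and from the redistribution of $\tilde F^i_A$-mass originally at $y\in(\bar w_i^+,i]$, is controlled analogously: for $h_1$ by $\bar w_i^+\gamma(i)+\E[W^+\1_{\{W^+>\bar w_i^+\}}]$, for $h_2$ by $\gamma(i)+\P(W^+>\bar w_i^+)$, and for $h_3$ by using~\eqref{diff:condi:prop} together with the strict lower bound $W^+\ge w_0>0$ from Definition~\ref{weight:assump} to divide out $W^+$ and absorb the contribution into a quantity of order $\bar w_i^+\gamma(i)$. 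The main technical obstacle I expect is precisely this last $h_3$-step: the integrand $xy\,\P(\poi(\hat z x)=l-1)$ is unbounded in both arguments, and it is only the positivity of $w_0$ combined with~\eqref{diff:condi:prop} that yields the needed uniform control.
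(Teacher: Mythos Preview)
Your overall plan matches the paper's proof: it too dispatches pointwise convergence ``from the construction'', invokes \eqref{conv:set:cond} (via Lemma~\ref{convergence:integral}) for Property~\ref{int:conv}, and for Property~\ref{tail:conv} observes that $F^i_B$ has no mass on $D_i^c$ while for $F^i_A$ it bounds $\int_{D_i^c}1$, $\int_{D_i^c}y$, and $\int_{D_i^c}xy\,\P(\poi(zx)=l-1)$ by $\gamma(i)$, $\bar w_i^+\gamma(i)$, and $x_0\bar w_i^+\gamma(i)$ respectively. Your sharper observation that $h_{3,z}$ vanishes at $l=0$ is a nice simplification of the paper's cruder bound.

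The gap is in your treatment of Property~\ref{int:conv} for $F^i_A$ in the case $\bar w_i^+\le i$. You posit a ``redistribution'' of $\tilde F^i_A$-mass from $\{y>\bar w_i^+\}$ down to $y=\bar w_i^+$ and then try to control it by $\E[W^+\1_{\{W^+>\bar w_i^+\}}]$, $\P(W^+>\bar w_i^+)$, etc. But $\bar w_i^+=2\,\E[W^+\mid D_i^c]$ need not tend to infinity: if, say, $W^+$ is bounded while $W^-$ is heavy-tailed and large $W^-$ forces $W^+$ small, then $\bar w_i^+$ stays bounded and your bounds do not vanish (indeed, under that reading $F^i_A\not\to F$ pointwise, so Property~\ref{pointwise:conv} would already fail). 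The paper sidesteps this entirely: it treats $F^i_A$ on $D_i$ as equal to $\tilde F^i_A$, consistent with the coupling in Section~5.3 where vertices in ${\bf D}_i^c$ are simply \emph{added} at $(x_0,\bar w_i^+,0)$ without touching the $\tilde F^i_A$-mass. Under that reading the only discrepancy $\int_{D_i}h\,d(F^i_A-\tilde F^i_A)$ is the atom contribution $h(x_0,\bar w_i^+,0)\gamma(i)\1_{\{\bar w_i^+\le i\}}$, which is $\le\bar w_i^+\gamma(i)$ for $h_1$, $\le\gamma(i)$ for $h_2$, and $=0$ for $h_3$---all vanishing, and your redistribution worry disappears. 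So the ``main technical obstacle'' you anticipate for $h_3$ is an artifact of reading the piecewise CDF formula too literally; take $F^i_A$ to be the measure $\tilde F^i_A+\gamma(i)\delta_{(x_0,\bar w_i^+,0)}$ and the argument closes immediately.
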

\begin{proof}
Property~\ref{pointwise:conv} of Definition~\ref{convergence} is obvious from the construction of the two sequences $\{F^i_A \}_{i \in \mathbb{N}}$ and $\{F^{i}_B \}_{i \in \mathbb{N}}$. Further by Lemma~\ref{convergence:integral} and the construction we know that the uniform convergence holds over the domain $D_i$, which is exactly Property~\ref{int:conv}. We need to show that Property~\ref{tail:conv} holds.
For $F_B^i$ by construction $\int_{D_i^c} \dd F_B^i (x,y,l)=0$, and therefore for any function $h$ it follows that $\int_{D_i^c} h(x,y,l) \dd F_B^i (x,y,l)=0$. For $F_A^i$ we find 
\begin{eqnarray}
\int_{D_i^c} 1 \dd F_A^i (x,y,l) &=&\gamma (i) \\
\int_{D_i^c} y \dd F_A^i (x,y,l) &=& \tilde{w}^+  \gamma (i) =2 \int_{D_i} y \dd F (x,y,l)\\
\int_{D_i^c} xy \P (\poi (z x)=l-1)  \dd F_A^i (x,y,l) & \leq &\int_{D_i^c} xy  \dd F_A^i (x,y,l) = \tilde{w}_i^+ x_0 \gamma (i)\label{bound:int:diff},
\end{eqnarray}
and all the quantities on the right hand side converge to $0$. Since they are only finitely many, they converge uniformly. Checking the definition of $h_{1,z}, h_{2,z}$ and $h_{3,z}$ shows that the integral with respect to these functions is bounded by one of the above quantities, such that the uniformity holds over $H$.
\end{proof}
\subsection{Coupling to the Original Vertex Sequence}
To allow for a sandwich type argument that squeezes our original sequence with limiting distribution $(W^-,W^+,C)$ between two finitary ones, we will develop a specific coupling. Let as above $({\bf w^-}, {\bf w^+}, \bf c  )$ be a regular vertex sequence with limiting distribution $F: \mathbb{R} \times \mathbb{R}  \times \mathbb{N}_0^\infty \rightarrow [0,1]$ and further $(W^{-},W^{+},C)$ a random vector with distribution function $F$. Recall the definition of $D_i$ in~\eqref{eq:Di}.  For any given $i \in \mathbb{N}$ we construct two finitary regular vertex sequences $({\bf w}^{-}, {\bf w}^{+}, {\bf c}  )_A^{i}$ and $({\bf w}^{-}, {\bf w}^{+}, {\bf c} )_B^{i}$ on the same index set $[n]$ with limiting distributions $F^{i}_A, F^{i}_B: \mathbb{R} \times \mathbb{R}  \times \mathbb{N}_0 \rightarrow [0,1]$ as follows:
\begin{enumerate}
\item Define ${\bf D}^c_{i}:=\{m \in [n] \mid ( w_m^-(n), w_m^+(n), c_m(n)  )\in D^c_{i}\}$. We consider a partition of $[n]\backslash {\bf D}^c_{i}$ into $i \cdot L (i) \cdot L(i)$ parts ${\bf D}_{k,j,l}:=\{m \in [n] \mid ( w_m^-(n), w_m^+(n))\in D_{k,j}, c_m(n)  =l \}$, where $L(i)$ is the number of half open intervals chosen in the definition of $F^{i}_A$ and $F^{i}_B$ (see (\ref{conv:set:cond})).
\item \label{it:2} Construct a vertex sequence $({\bf w}^{-}, {\bf w}^{+}, {\bf c})_{B}^{i}$ on $[n]$ by $(w_{m}^{-},w_{m}^{+}, c_{m}  )_{B}^{i}=(p_k,p_j,l)$ for $m \in {\bf D}_{k,j,l}$ and $(w_{m}^{-}, w_{m}^{+}, c_{m})^{i}_B=(x_0,x_0,\infty )$ for $m \in [n]\backslash {\bf D}^c_{i}$.
\item \label{it:3} Construct a sequence $({\bf w}_{m}^{-}, {\bf w}_{m}^{+}, {\bf c}_{m})_{A}^{i}$ on $[n]$ by $(w_{m}^{-},w_{m}^{+}, c_{m}  )_A^i=(p_{k+1},p_{j+1},l)$ for $m \in {\bf D}_{k,j,l}$ and $(w_{m}^{-}, w_{m}^{+}, c_{m})_A^i=(x_0,\bar{w}_i^+,0)$ for $m \in [n]\backslash {\bf D}^c_{i}$. 
\end{enumerate}
It can easily be seen that the resulting sequences have the required convergence properties, i.e. are regular vertex sequences. 
In \ref{it:2}., the choice of the in- and outweights is irrelevant for vertices with percolation threshold infinity since they cannot spread the contagion process. In \ref{it:3}., for vertices with percolation threshold equal to $0$, the in-weight is irrelevant and was chosen in order to have a bound on the integral in (\ref{bound:int:diff}) and the resulting uniform convergence property for the functions $h_{3,z}$.

\subsection{Proof of Theorem~\ref{thm:2}}\label{proof:thm:2}
Denote by $\mathcal{A}_n ({\bf w^-},{\bf w^+},{\bf c})$ the set of infected vertices for $G ({\bf w^-},{\bf w^+},{\bf c})$, and by $\mathcal{A}_{n} (({\bf w^-},{\bf w^+},{\bf c})_A^{i})$ and $\mathcal{A}_{n} (({\bf w^-},{\bf w^+},{\bf c})_B^{i})$ those for $G(({\bf w^-},{\bf w^+},{\bf c})_A^{i})$ and $G(({\bf w^-},{\bf w^+},{\bf c})_B^{i})$ respectively.
\begin{proof}[Proof of Theorem~\ref{thm:2}]
Again we consider only the slightly more complicated result when $f$ is differentiable and $f^{'} (\hat{z})<0$. The vertex sequence $({\bf w^-},{\bf w^+},{\bf c})_B^i$ has been constructed such that for each vertex $ m \in \cup {\bf D}_{k,j,l}$ the threshold value agrees with its counterpart in $({\bf w^-},{\bf w^+},{\bf c})$ and its in- and outweights are lower. Furthermore, the vertices in $[n] \backslash \cup {\bf D}_{k,j,l}$ are uninfectable in $G(({\bf w^-},{\bf w^+},{\bf c})_B^i)$. These considerations imply that $G(({\bf w^-},{\bf w^+},{\bf c}))$ and $G(({\bf w^-},{\bf w^+},{\bf c})_B^i)$ can be coupled such that 
\begin{equation}\label{lower:bound}
\mathcal{A}_{n} (({\bf w^-},{\bf w^+},{\bf c})_B^{i}) \preceq \mathcal{A}_n (({\bf w^-},{\bf w^+},{\bf c})),
\end{equation}
where $\preceq$ denotes stochastic ordering.

To compare $\mathcal{A}_{n} (({\bf w^-},{\bf w^+},{\bf c})_A^{i})$ with $\mathcal{A}_n (({\bf w^-},{\bf w^+},{\bf c}))$, first note that for each $ m \in \cup {\bf D}_{k,j,l}$ the threshold values are the same for $({\bf w^-},{\bf w^+},{\bf c})$ and $({\bf w^-},{\bf w^+},{\bf c})_A^{i}$ but in and out weight are larger in $({\bf w^-},{\bf w^+},{\bf c})_A^{i}$. Further any $m\in [n] \setminus 
 \left( \cup {\bf D}_{k,j,l} \right)$ is an infected knot in $G(({\bf w^-},{\bf w^+},{\bf c})_A^{i})$. The total out-weight of the vertex set $ [n] \setminus \left( \cup {\bf D}_{k,j,l} \right)$ in $({\bf w^-},{\bf w^+},{\bf c})_A^{i}$ is $2 n \int_{D^c_i} y \dd G(x,y,l)(1+o(1))$ while in $({\bf w^-},{\bf w^+},{\bf c})$ it is $n \int_{D^c_i} y \dd G(x,y,l)(1+o(1))$. This implies that for each vertex $v \in \cup {\bf D}_{k,j,l}$ with in-weight $w^-$, the in-degree $D^-_v$ and $D^-_{v,A}$ of $v$ in ${\bf D}^c_{i}$ can be coupled (similar as in the proof of Theorem~\ref{sec2:degree:seq}) to Poisson random variables $Z_{v}$ and $Z_{v,A}$ (depending on $i$) with parameter $w^- 2 \int_{D^c_i} y \dd G(x,y,l)$ and $w^- \int_{D^c_i} y \dd G(x,y,l)$, respectively, such that
 \begin{eqnarray}
 \P (D^-_v \neq Z_{v})\leq ((w^-)^2+ w^-) o(1)=o(1)\\
  \P (D^-_{v,A} \neq Z_{v,A})\leq ((w^-)^2+ w^-) o(1)=o(1).
 \end{eqnarray}
Since $\P (Z_{v}\geq l) < \P (Z_{v,A}\geq l)$ for all $l \in \mathbb{N}$, and $w_v^-$ and $c_v$ are bounded by $i$, it follows that $\P (D^-_{v}\geq l) < \P (D^-_{v,A}\geq l)$ for $l\leq i$. Since all vertices in ${\bf D}^c_{i}$ are infected in the random graph $G((w^-,w^+,c)_A^{i})$, the probability that the vertex $v$ has egdes to at least $l$ infected vertices in ${\bf D}^c_{i}$ for $l\leq i$ is larger in $G(({\bf w^-},{\bf w^+},{\bf c})_A^{i})$ than in $G(({\bf w^-},{\bf w^+},{\bf c}))$.
These considerations imply that
\begin{equation}\label{coupling:fin:seq}
\mathcal{A}_n (({\bf w^-},{\bf w^+},{\bf c})_B^{i}) \preceq \mathcal{A}_n (({\bf w^-},{\bf w^+},{\bf c})) \preceq \mathcal{A}_n (({\bf w^-},{\bf w^+},{\bf c})_A^{i})).
\end{equation}
Let $(W^{-},W^{+},C)_A^i$ and $(W^{-},W^{+},C)_B^i$ be random vectors distributed according to $F^i_A$ and $F^i_B$ respectively.
Chose two sub-sequences $\{l_{A,i}\}_{i \in \mathbb{N}}\subset \mathbb{N}$  and $\{l_{B,i}\}_{i \in \mathbb{N}}\subset \mathbb{N}$ for the $F$-convergent sequences $F^i_A$ and $F^i_B$ as provided by Proposition~\ref{prop:subseq} such that $\lim_{i \rightarrow \infty }\hat{z}_A^{l_{A,i}}= \lim_{i \rightarrow \infty }\hat{z}_B^{l_{B,i}} =\hat{z}$, where $\hat{z}_A^{l_{A,i}}$ and $\hat{z}_B^{l_{B,i}}$ are the smallest zeros of $f(z,(W^{-},W^{+},C)_A^{l_{A,i}})$ and $f(z,(W^{-},W^{+},C)_B^{{l_{B,i}}})$ respectively.. Additionally $\lim_{i \rightarrow \infty }g(\hat{z}_A^{l_{A,i}},(W^{-},W^{+},C)_A^{l_{A,i}})= \lim_{i \rightarrow \infty }g (\hat{z}_B^{l_{B,i}},(W^{-},W^{+},C)_B^{l_{B,i}}) =g(\hat{z} )$ holds. By Theorem~\ref{fin:weight:res} together with (\ref{coupling:fin:seq}), it follows that for all $\varepsilon > 0$ and all $i \in \mathbb{N}$
\begin{equation*}
\lim_{n \rightarrow \infty}\P \left(g (\hat{z}_B^{l_{B,i}},(W^{-},W^{+},C)_B^{l_{B,i}}) -\varepsilon  \leq n^{-1}\abs{\mathcal{A}_n ({\bf w^-},{\bf w^+},{\bf c})} \leq  g(\hat{z}_A^{l_{A,i}},(W^{-},W^{+},C)_A^{i})+\varepsilon \right)=1,
\end{equation*}
proving Theorem~\ref{thm:2}.
\end{proof}

\section{Applications} \label{sec:apps}
\subsection{Quantifying Systemic Risk}
\label{sec:qsr}

We begin with the proof of Theorem~\ref{thm:epsilon:grow}.

\begin{proof}[Proof of Theorem~\ref{thm:epsilon:grow}]
Since $\psi_0 (x)=1$ for $x>0$ and especially $\psi_0 (x) > \psi_r (x)$ for $r\in \mathbb{N}^\infty$ and further $W^+$ strictly positive, it follows that 
\begin{equation}\label{ineq:chain}
\E [W^+ \psi_{C_M} (z W^-)] > \E [W^+ \psi_C (z W^-)]\geq z ,
\end{equation}
such that $\E [W^+ \psi_{C_M} (z W^-)]>0$ for $z \in (0,z_0 ]$. To see that the left inequality in (\ref{ineq:chain}) is really strict, choose $\bar{w}$ with $\P ((W^- \leq \bar{w}) \wedge  (M=0)) \geq \P (C_M = 0)/2$, then it follows that 
\begin{equation*}
\E [W^+ \psi_{C_M} (z W^-)] - \E [W^+ \psi_C (z W^-)] \geq x_0 (1/2)\P (C_M = 0) (1-\psi_{1} (z \bar{w})) >0.
\end{equation*}
By continuity of $\E [W^+ \psi_C (z W^-)]$ (compare with the proof of Lemma~\ref {f:continuous}) it is easy to see that $\E [W^+ \psi_C (z W^-)]>0$ for $z \in (0,z_0 + \delta ]$ for some $\delta > 0$. Furthermore, since $\P (C_M = 0)>0$ and $W^+\geq x_0$, it follows that $\E [W^+ \psi_{C_M} (0 W^-)]>0$. Let $\hat{z}$ be the smallest positive solution of 
\begin{equation*}
f(z;(W^{-},W^{+},C_M))=0.
\end{equation*}
The above considerations imply that $\hat{z} > z_0$. Similarly $\E[\psi_{C_M} (W^- \hat{z} )] > \E[\psi_{C} (W^- z_0 )]$. By Theorem~\ref{thm:2} for any $\varepsilon >0$
\begin{equation*}
\lim_{n \rightarrow \infty} \P ( n^{-1} \abs{\mathcal{A}_n } <  \E[\psi_{C_M} (W^- \hat{z} )] - \varepsilon )=0.
\end{equation*}
Choose $\varepsilon= \E[\psi_{C_M} (W^- \hat{z} )] - \E[\psi_{C} (W^- z_0 )]>0$ and (\ref{small:fraction:bound}) follows.
\end{proof}
A sufficient condition for the assumptions of Theorem~\ref{thm:epsilon:grow} to hold is that $f$ is right differentiable in $0$ with positive derivative. To see that the bound in (\ref{small:fraction:bound}) can in general not be improved consider a vertex sequence with limiting distribution $(W^-,W^+,C)$ fulfilling (\ref{cond:small:sets}) with $f(z;(W^{-},W^{+},C))$ continuously differentiable and such that $f' (\hat{z};(W^{-},W^{+},C))<0$ where $\hat{z}>0$ is the smallest strictly positive solution of 
\begin{equation*}
f(z;(W^{-},W^{+},C))=0.
\end{equation*}
Now infect ex post all vertices i.i.d. with probability $p >0$. The resulting vertex sequence is close to $(W^{-},W^{+},C_{M_p})$ with $C_{M_p}:=C M_p$, where $M_p$ is a Bernoulli random variable independent of all others with success probability $1-p$. Conditioning on $M_p$ shows that
\begin{equation*}
f(z;(W^{-},W^{+},C_{M_p}))= (1-p) \E [W^+ \psi_C (zW^-)] + p \E [W^+] - z.
\end{equation*}
Choose $\delta_0 > 0$ such that $f' (z;(W^{-},W^{+},C))<0$ for all $z\in (\hat{z}-\delta_0,\hat{z}+\delta_0)$. Since we have that   $\frac{\partial}{\partial z} \E [W^+ \psi_C (zW^-)] \geq 0$, it follows that
\begin{equation}\label{diff:smaller:0}
f' (z;(W^{-},W^{+},C_{M_p})) \leq f' (z;(W^{-},W^{+},C))<0,
\end{equation}
for $z\in (\hat{z}-\delta_0,\hat{z}+\delta_0)$.
Let $\hat{z}_p\geq \hat{z}$ be the first positive solution of $f(z;(W^{-},W^{+},C_{M_p}))=0$. Then one easily observes that $\lim_{p \rightarrow 0} \hat{z}_p =\hat{z} $ and $\lim_{p \rightarrow 0} \E [ W^+ \psi_{C_{M_p}} (\hat{z}_p W^-)]  =\E [ W^+ \psi_C (\hat{z} W^-)] $. This implies that there exists $p_0>0$ such that $\hat{z}_p\in (\hat{z}-\delta_0,\hat{z}+\delta_0)$ for $p<p_0$ and therefore $f' (\hat{z}_p;(W^{-},W^{+},C_{M_p}))<0$ by (\ref{diff:smaller:0}). By Theorem~\ref{thm:2}, for any $\delta > 0$ there exists $p$ such that with high probability
\begin{equation*}
n^{-1}\abs{\mathcal{A}_{n} } \leq  \E[\psi_C (W^- \hat{z} )] + \delta ,
\end{equation*}
which shows that the bound in (\ref{small:fraction:bound}) is best possible.
The following corollary shows that in a network satisfying the requirements of Theorem~\ref{thm:epsilon:grow}  a sublinear set of initially infected vertics is sufficient for the infection to spread to a linear set.
\begin{corollary}\label{corr:threshold:compare}
Let $({\bf w^-}, {\bf w^+}, \bf c  )$ be a regular vertex sequence and $(W^{-},W^{+},C)$ a random vector with distribution as the limiting distribution of $({\bf w^-}, {\bf w^+}, \bf c  )$. Assume that $(W^-,W^+,C)$ satisfies \eqref{cond:small:sets} for $z<z_0$. Then, there exists a sequence $\varepsilon (n)$ with $\lim_{n \rightarrow \infty }\varepsilon (n) =0$ such that if we infect each vertex $i \in [n]$ independently with probability $\varepsilon (n)$, then with high probability
\begin{equation*}
n^{-1} \abs{\mathcal{A}_n }\geq  \E[\psi_C (W^- z_0 )] > 0,
\end{equation*}
where $\mathcal{A}_n$ is the final set of infected vertices.
\end{corollary}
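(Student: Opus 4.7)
The plan is to apply Theorem~\ref{thm:epsilon:grow} diagonally along a sequence of vanishing infection rates. Set $\alpha := \E[\psi_C(W^- z_0)] > 0$. For every fixed $p \in (0,1)$, introduce an independent Bernoulli mark $M_p$ with $\P(M_p = 0) = p$, independent of $(W^-,W^+,C)$. If we attach to each vertex $i \in [n]$ an independent mark $m_i$ with $\P(m_i = 0) = p$ and set $\bar{c}_i := c_i m_i$, then the joint empirical distribution of $(w^-_i, w^+_i, c_i, m_i)_{i \in [n]}$ converges in probability, as $n \to \infty$, to the product distribution $\bar{F}_p := F \otimes \mathrm{Ber}(1-p)$, by the law of large numbers applied to the i.i.d.\ Bernoulli marks (which are independent of the weights). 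For the fixed value $p$, the vector $(W^-, W^+, C, M_p)$ satisfies the hypotheses of Theorem~\ref{thm:epsilon:grow}: the underlying triple $(W^-, W^+, C)$ still satisfies \eqref{cond:small:sets} for $z < z_0$ (by assumption), and $\P(M_p = 0) = p > 0$. Theorem~\ref{thm:epsilon:grow} therefore yields
\begin{equation*}
\lim_{n \to \infty} \P\bigl( n^{-1} |\mathcal{A}_n(p)| \geq \alpha \bigr) \;=\; 1,
\end{equation*}
where $\mathcal{A}_n(p)$ denotes the final infected set under rate-$p$ ex post infection.

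Next I would perform a standard diagonal extraction. Pick any sequence $p_k \downarrow 0$, say $p_k = 1/k$. By the display above, for each $k$ there exists $n_k$ (which we may take strictly increasing in $k$) such that
\begin{equation*}
\P\bigl( n^{-1} |\mathcal{A}_n(p_k)| \geq \alpha \bigr) \;\geq\; 1 - 1/k, \qquad \text{for all } n \geq n_k.
\end{equation*}
Define $\varepsilon(n) := p_k = 1/k$ for $n \in [n_k, n_{k+1})$, and $\varepsilon(n) := 1$ for $n < n_1$. Then $\varepsilon(n) \to 0$ and, for $n \in [n_k, n_{k+1})$, the rate-$\varepsilon(n)$ infection is exactly rate $p_k$, so the lower bound $n^{-1}|\mathcal{A}_n| \geq \alpha$ holds with probability at least $1 - 1/k$. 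As $n \to \infty$ we have $k \to \infty$, so this probability tends to $1$, which is the claim.

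The only real subtlety is the justification of applying Theorem~\ref{thm:epsilon:grow} when the marks $m_i$ are random rather than a deterministic input to a regular vertex sequence. This is handled by conditioning on the event $\mathcal{E}_n$ that the empirical distribution of $(w^-_i, w^+_i, c_i, m_i)_{i \in [n]}$ is within distance $o(1)$ of $\bar{F}_{p_k}$; by the (conditional) law of large numbers, $\P(\mathcal{E}_n) \to 1$, and on $\mathcal{E}_n$ the sequence is (asymptotically) regular with the required limit, so Theorem~\ref{thm:epsilon:grow} applies verbatim. This is the one step that deserves care, but it is routine given the i.i.d.\ structure of the marks and their independence from the (deterministic) weights; the heavy lifting has already been done in Theorem~\ref{thm:epsilon:grow}, and the corollary is essentially a packaging statement that converts the ``fixed small $p$'' result into a ``vanishing $\varepsilon(n)$'' result.
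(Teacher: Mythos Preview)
Your proof is correct and follows essentially the same diagonal-extraction argument as the paper: fix infection rates $p_k=1/k$, apply Theorem~\ref{thm:epsilon:grow} to find thresholds $n_k$ at which the bound holds with probability $\ge 1-1/k$, and set $\varepsilon(n)=p_k$ on $[n_k,n_{k+1})$. The only difference is that you explicitly address the random-versus-deterministic mark issue via conditioning on the empirical distribution, which the paper leaves implicit.
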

\begin{proof}
Let $\varepsilon_i:=1/i$ and as before $M_{\varepsilon_i}$ a Bernoulli random variable independent of all other variables with success probability $1-\varepsilon_i$. By Theorem~\ref{thm:epsilon:grow}, we can define $n_i$ such that 
\begin{equation*}
\P \left( n^{-1} \abs{\mathcal{A}_n } < \E[\psi_C (W^- z_0 )] \right) \leq 1/i 
\end{equation*}
for $n\geq n_i$ and $n_i> n_{i-1}$ (to ensure that the sequence $\{n_i\}_{i \in \mathbb{N}}$ is strictly increasing) in the random graph parametrized by $(W^-,W^+,C_{M_{\varepsilon_i}})$. Define $\varepsilon (n)=\varepsilon_i$ for $n_{i} \leq n < n_{i+1}$.
\end{proof}

We proceed with the proof of Theorem~\ref{prop:resilient}.
\begin{proof}[Proof of Theorem~\ref{prop:resilient}]
Let $\varepsilon >0$, choose $\delta$ such that $g (z; (W^-,W^+,C)) \leq \varepsilon/2$ for $z \in [0,\delta]$. This choice is possible since $g$ is continuous by Lemma~\ref{f:continuous} and $g (0; (W^-,W^+,C))=0$.
In order to prove the claim, we first show that there exists $j_0$ such that $f (z,(W^-,W^+,C_{M^{(j)}}))$ has a fixpoint $\hat{z}_{M^{(j)}}\leq \delta$ for $j \geq j_0$ with
\begin{equation}
\E [ W^+ W^- \P (\poi (\hat{z}_{M^{(j)}} W^-)=C_{M^{(j)}}-1)\1_{\{C_{M^{(j)}}\geq 1 \}} ] <  1.
\end{equation}
Let $\delta_2  := \min \{ \delta , z_0 \} $.
By Assumption~\ref{cond:small:f}, we have that $f (\delta_2,(W^-,W^+,C)) < 0$, since $\delta_2\leq z_0$. Further,
\begin{eqnarray}
& & f (\delta_2,(W^-,W^+,C_{M^{(j)}}))\nonumber \\
&=& f (\delta_2,(W^-,W^+,C)) + \E [W^+ ( \psi_{C_{M^{(j)}}} (\delta_2 W^-) - \psi_C (\delta_2 W^-)) ]\nonumber \\
& \leq & f (\delta_2,(W^-,W^+,C)) + \E [W^+ \1_{\{M^{(j)}=0\}} ] \label{choose:i:resilient}
\end{eqnarray}
Since $\lim_{i \rightarrow \infty}\P(M^{(j)}=0)=0$ it follows that $\lim_{i \rightarrow  \infty }\E[W^+ \1_{\{M^{(j)}=0\}}]=0$ by $\E[W^+]<\infty$. This observation allows us to choose $j_0$ such that $\E [W^+ \1_{\{M^{(j)}=0\}} ] < - f (\delta_2,(W^-,W^+,C))$ and the right hand side of (\ref{choose:i:resilient}) is strictly smaller than $0$. This ensures that $\hat{z}_{M^{(j)}}\leq \delta_2 $ for $j\geq j_0$, where $\hat{z}_{M^{(j)}}$ is the first positive zero of $f (z,(W^-,W^+,C_{M^{(j)}}))$, which exists by \ref{f:continuous}. Furthermore, by the choice of $\delta_2$
\begin{equation*}
\E [ W^+ W^- \P (\poi (\hat{z}_{M^{(j)}} W^-)=C-1)\1_{\{C\geq 1 \}} ] <  1. 
\end{equation*}
As a consequence, since $P ( ( C_{M^{(j)}} =C) \cup (C_{M^{(j)}}=0))=1$,
\begin{equation*}
\E [ W^+ W^- \P (\poi (\hat{z}_{M^{(j)}} W^-)=C_{M_i}-1)\1_{\{C_{M^{(j)}}\geq 1 \}} ] <  1. 
\end{equation*}
The monotony of $g$ and $\hat{z}_{M^{(j)}}<\delta $ implies $g (\hat{z}_{M^{(j)}}; (W^-,W^+,C))<g (\delta; (W^-,W^+,C))\leq \varepsilon / 2$. Further
\begin{equation}
g (\hat{z}_{M^{(j)}}; (W^-,W^+,C_{M_i})) \leq g (\hat{z}_{M^{(j)}}; (W^-,W^+,C)) + \P (M^{(j)}=0),
\end{equation}
and choosing $j_{\varepsilon}\geq j_0$ such that $\P (M^{(j)}=0) < \varepsilon/2$ for all $j\geq j_{\varepsilon}$ the claim follows again by Theorem~\ref{thm:2}.
\end{proof}


\subsection{Examples}\label{sec:Ex}
In this section we  give some example distributions of random vectors $(W^-,W^+,C)$. The first example shows that in the absence of a second moment of $(W^-,W^+)$, condition (\ref{cond:small:sets}) can be satisfied even when not a single vertex with threshold value $1$ exists. 

\begin{example}\label{non:res:no:thres1}
Assume that $W=W^-=W^+$ and $\P (C=r)=1$ for some $r \in \mathbb{N},r\geq 2$. Furthermore, let $W$ be power-law distributed with exponent $\beta \in (2,3)$, that is, the density function $h(w)$ is given by
\begin{equation*}
	h(w):=\left\{
	\begin{array}{ll}
	 Cw^{-\beta}, & \mbox{if } w\geq 1 \\
	0 , & \mbox{else } \\
	\end{array},
\right.
\end{equation*}
where $C$ is a normalizing constant. Since $\psi_r (zW) > e^{-zW} \frac{(zW)^r}{r \!! }$, it follows that
\begin{eqnarray*}
\E [W^+ \psi_C (zW^- )] & \geq & \frac{z^r}{r\!!}C\int_1^{\infty} w^{(r+1)-\beta }e^{-zw} \dd w \nonumber \\
& \geq & z^r C_1 \int_1^{1/z} w^{(r+1)-\beta }e^{-zw} \dd w \nonumber\\
& \geq & z^r C_1 \int_1^{1/z} w^{(r+1)-\beta } \dd w  \geq  C_1(z^{\beta - 2}-z^r),\label{est:pl}  \nonumber
\end{eqnarray*}
where it was used that $e^{-zw}\geq e^{-1}$ for $w\leq 1/z$ and $C_1$ is a constant changing from line to line but independent of $z$. Now choose $z_0$ such that $C_1(z^{\beta - 2}-z^r)>z$ for $z \in (0,z_0)$.
\end{example} 
The last example can be easily generalized to the situation where $\P (C=r)>0$ for some $r\geq 2,r\in \mathbb{N}$ and $W | (C=r)$ is power-law distributed with parameter $\beta \in (2,3)$. In contrast, note that if we choose the exponent of the power-law to be $> 3$, then a simple calculation shows that the assumptions of Theorem~\ref{prop:resilient} are satisfied, and thus the network will be always resilient (see also~\cite{Cont2013b}).

In our next example we choose the parameters such that $\E \left[ W^+ W^- \P \left( \poi (z W^- ) =  C-1 \right)\1_{C>0} \right] \leq \kappa <1$ in a neighborhood of $\hat{z}$, the fixpoint of $f$ and Theorem~\ref{thm:2} allows to determine the exact fraction of infected vertices at the end of the process.
\begin{example}\label{example:1}
Let $(\Omega,\mathcal{F},\mathbb{P})$ be a probability space with positive random variable $W\in \mathbb{R}$ and random vector $(W^-,W^+,C)\in \mathbb{R}_+ \times \mathbb{R}_+ \times \mathbb{N}$ defined. Let further $W^+\in \mathcal{L}^p, p>1$ be independent of $\{W^-,C \}$ given $W$ ($W^+ \independent_{\sigma (W)} ( W^-,C ))$ and let $C \vert W^-$ be uniformly distributed on $\{0,1,2,\dots, \lceil W^- \rceil \}$. Further, we assume that there exists a $p>1$ such that $\E [(W^+)^p (W^-)^{p-1}]<\infty$.

Let $\hat{z}$ be the first positive zero of $f(z,(W^-,W^+,C)) $. We show that $f'(\hat{z},(W^-,W^+,C)) < 0$. For this we differentiate $f(z,(W^-,W^+,C) $ below the integral sign and obtain that
\begin{equation}\label{diff:candidate}
\E \left[\frac{\partial W^+ \psi_C (z W^- )}{\partial z} \right]= 
\E \left[ W^+ W^- \P \left( \poi (z W^- ) =  C-1 \right)\1_{C>0} \right].
\end{equation}
Conditioning on $W$ and using that $W^+ \independent_{\sigma (W)} ( W^-,C ))$ yields
\begin{eqnarray}
&& \E \left[ (W^+ W^- \P \left( \poi (z W^- ) =  C-1 \right)\1_{C>0})^p \right]\nonumber \\
&& \E[ \E [ (W^+)^p \vert W] \E [ (W^- \P \left( \poi (z W^- ) =  C-1 \right)\1_{\{C>0\}})^p \vert W ] ]\nonumber \\
& = & \E[ \E [ (W^+)^p \vert W] \E [ \E[ (W^- \P \left( \poi (z W^- ) =  C-1 \right)\1_{\{C>0\}})^p \vert W,W^-] \vert W ] ].\label{example1:eq:1}
\end{eqnarray}
Since the distribution of $C \vert W, W^-$ is uniform on $\{0,1,2,3,\dots, \lceil W^- \rceil \}$ using the disintegration theorem (\cite[Thm. 6.4.]{Kallenberg2001}), it follows that
\begin{eqnarray}
\E[ (W^-)^p \P \left( (\poi (z W^- ) =  C-1 \right)\1_{C>0})^p \vert W,W^-]& =&  (W^-)^p  \E[(\P \left( \poi (z W^- ) =  C-1 \right)\1_{C>0})^p \vert W,W^-]\nonumber \\
&=& (W^-)^p  \sum_{c=1}^{\lceil W^- \rceil} \frac{1}{\lceil W^- \rceil +1}(\P \left( \poi (z W^- ) =  c-1 \right))^p \nonumber \\
&\leq & \frac{(W^-)^p}{\lceil W^- \rceil +1} (\P \left( \poi (z W^- ) \leq \lceil W^- \rceil -1 \right))^p \nonumber \\
&\leq  &   (W^-)^{p-1}\nonumber 
\end{eqnarray}
These considerations imply that 
\begin{equation}\label{diff:bound}
\E[ (W^+ W^- \P \left( \poi (z W^- ) =  C-1 \right)\1_{C>0})^p ]\leq \E [(W^+)^p (W^-)^{p-1}]<\infty
\end{equation}
for all $z$, which justifies differentiation under the integral sign by the Vitali convergence theorem and shows that
\begin{equation}\label{diff:f}
f^{'}(z,(W^+,W^-,C))=\E[  W^+ ] \E[\frac{W^-}{\lceil W^- + 1 \rceil} \P \left( \poi (z W^- ) \leq \lceil W^- \rceil -1 \right) ]] -1.
\end{equation}
This observation implies that the derivative is strictly decreasing in $z$. Furthermore, since $\P(C=0)>0$, it follows that $f(0,(W^+,W^-,C))>0$ and by $f(\hat{z})=0$ that $f^{'}(z,(W^+,W^-,C))<0$ in a neighborhood of $\hat{z}$. Equality (\ref{diff:candidate}), which is now justified, implies that 
\begin{equation*}
\E \left[ W^+ W^- \P \left( \poi (z W^- ) =  C-1 \right)\1_{C>0} \right]<1
\end{equation*}
in a neighborhood of $z$.
The example can be adjusted slightly by choosing $C \vert W, W^-$ uniform on $\{1,2,3,\dots, \lceil W^- \rceil \}$, then $\P (C=0)=0$ and analog observations as above show that for $\E [W^+]<1$ the derivative $f^{'}(z,(W^+,W^-,C))$ is negative for all $z$ and the network is resilient according to Theorem~\ref{prop:resilient}. Furthermore, one easily observes that the assumption $\E [(W^+)^p (W^-)^{p-1}]<\infty$ can be dropped if $C \vert W^-$ is uniformly distributed on $\{0,1,2,\dots, \lceil (W^-)^p \rceil \}$ for some $p>1$.
\end{example}
A particularly interesting case in the previous example is given by choosing $W$ power law with some parameter $\alpha$ and $W^-\vert W$ and $W^+\vert W$ both Poisson distributed with parameter $W$. Then both $W^-$ and $W^+$ are power law distributed with parameter $\alpha$ (see e.g.~\cite{Hofstad2014} for a related situation).

In our final example (\ref{cond:small:sets}) is satisfied and therefore any small infection spreads to a positive fraction of the random graph. In contrast to Example~\ref{non:res:no:thres1}, the infection is spread mainly by vertices with threshold function $1$ and the distribution of $(W^-,W^+)$ can have all moments.
\begin{example}
We adjust Example~\ref{example:1} slightly such that $\P (C=0)=0$ and $\P (C=1 \vert W^{-}) \geq  (1+ \delta )/W^{-} $ for some $\delta >0$ (this can be done for example by choosing $\P (W^{-}\geq 1+ \delta)=1$ and the conditional distribution of $C \vert (W,W^-)$ such that $\P ( C =1\vert (W,W^-) )= (1+ \delta )/W^{-}$ and $\P ( C =j\vert (W,W^-) )= ( 1-(1+ \delta )/W^{-})/(\lceil W^{-} \rceil - 1 ))$ for $j \in \{2,3,\dots, \lceil W^- \rceil \}$. Furthermore, assume that $\E[W^+]>1$. To see that Condition \ref{cond:small:sets} holds, observe that
\begin{eqnarray}
\E [\E [W^+ \P (\poi (\varepsilon W^-)\geq C  )\vert (W,W^-)]]& \geq & \E[ \E [W^+ \vert (W,W^-)]  \E [ \P (\poi (\varepsilon W^-)\geq 1 ) \frac{1+ \delta }{W^{-}} ] \nonumber \\
&=& \E [ \E [W^+ \vert W] e^{- \varepsilon W^{-}}\varepsilon W^{-}\frac{1+ \delta }{W^{-}}]\nonumber \\
&\geq & \varepsilon \E [ \E [W^+ \vert W] e^{- \varepsilon \bar{W}} (1+ \delta )\1_{\{W^{-} \leq \bar{W} \}}].
\end{eqnarray}
Choose $\bar{W}$ such that $\E[\E [W^+ \vert W]  \1_{\{ W^{-} \leq \bar{W} \}}]\geq (1-\delta_1) \E[W^{+}]$ for some $\delta_1$ with $(1-\delta_1)(1+ \delta )>1$. Then there exists $\varepsilon_0$ such that for  $\varepsilon \leq \varepsilon_0$ this quantity is larger than $\varepsilon (1+\delta_2)$ for some $\delta_2>0$ by $\E [ W^{+}]>1$. Condition \ref{cond:small:sets} follows.
\end{example}

\section{Extensions}\label{extension:relevance}

In order to define measures of resilience that incorporate the relevance of each vertex we shall extended the setting of a regular vertex sequences and state an extension of the Theorem~\ref{thm:2} and Theorem~\ref{prop:resilient} for this setting. In particular, let
\begin{equation*}
\mathcal{R}_n=\sum_{i \in \mathcal{A}_n} r_i,
\end{equation*}
the loss in relevance due to the infection. As mentioned in Section~\ref{sec:resapp} we will be interested in $\lim_{n\rightarrow \infty }  (\mathcal{R}_n \big/ \sum_{i \in [n]} r_i  )$, the fractional loss in relevance. Theorem~\ref{thm:2} will then arise as a special case by choosing $r_i=1$ for $i\in [n]$.
Since the proof can be done without any further mathematical difficulties but notation complicates, we shall only outline the changes necessary in Sections~\ref{sec:finitary}, \ref{sec:approximation} and \ref{sec:qsr}.

In addition to the weights and threshold values, we assume now that each vertex $i\in [n]$ is associated with a relevance value $r_i\in \mathbb{R}_+$. As in the case of weights and threshold values, we assume that we are given a relevance sequence ${\bf r} (n)=(r_1 (n),\dots ,r_n (n)) \in (\mathbb{N}_0)^n$. Again, we need some kind of regularity
\begin{definition}[\bf Extended Regular Vertex Sequence]\label{ext:reg:vertex:seq}
We call $({\bf w^-}, {\bf w^+},{\bf r} ,\bf c  )$ a \emph{regular extended vertex sequence} if $({\bf w^-}, {\bf w^+},\bf c  )$ is a regular vertex sequence and there exists a distribution function $F: \mathbb{R}^2 \times \mathbb{R}_+ \times \mathbb{N}_0^\infty \rightarrow [0,1]$ such that for all points $(v,x,y,l) \in \mathbb{R}^2 \times \mathbb{R}_+ \times \mathbb{N}_0^\infty$ for which $F(v,x,y,l)$ is continuous in $(v,x,y)$ we have $\lim_{n\rightarrow \infty} F_n(v,x,y,l)=F(v,x,y,l)$, where $F_n(v,x,y,l)$ is the empirical distribution function defined by
\begin{equation*}
F_n(v,x,y,l)=n^{-1}\sum_{i \in [n]} \1 \{w^-_i(n) \leq v, w^+_i(n) \leq x, r_i(n)  \leq y , c_i(n)  \leq l \}, \;\; \forall (v,x,y,l) \in \mathbb{R} \times \mathbb{R}  \times \mathbb{N}_0^\infty.
\end{equation*} 
\end{definition}
Let in the following $({\bf w^-}, {\bf w^+},{\bf r} ,\bf c  )$ be a regular extended vertex sequence and $(W^-,W^+,R,C)$ a random variable distributed according to its limiting distribution. Further, we assume that $\E [R]< \infty$. If we can determine $\lim_{n\rightarrow \infty } n^{-1} \mathcal{R}_n $, we can also determine the fractional loss in relevance (\ref{rel:loss:rel}), which is then $\lim_{n\rightarrow \infty } (\E[R] n)^{-1} \mathcal{R}_n $.
A careful analysis of the proof of Theorem~\ref{thm:2} shows that in addition to the sets $I_{j,k;m}^{l}(t)$ for $1\leq j\leq l_1, 1\leq k\leq l_2,0 \leq l<m \leq c_{\max}$ we can also define $I_{j,k;m}^{m}(t)$ for $1\leq j\leq l_1, 1\leq k\leq l_2, m \leq c_{\max}$, the set of infected vertices with weight levels $\tilde{w}^-_j$ and $\tilde{w}^+_k$ and percolation threshold $m$. That way we also keep track of the types of defaulted vertices and not only their average weight. Let further $c_{j,k;m}^{m}(t)$ be their size. Their expected evolution is then given by
\begin{equation*}
\E[c^{m}_{j,k;m}(t)-c^{m}_{j,k;m}(t-1)\vert  h(t-1)] = n^{-1}\left(\1_{\{l\neq 0\}}c^{m-1}_{j,k;m}(t-1) \right) \left(\frac{\tilde{w}_j^- w (t-1)}{u(t-1)}\right),
\end{equation*}
where $h(t)$ is now the state of the extended system:
\begin{equation*}
h (t)=\left(u(t),w (t),\{c_{j,k;m}^{l}(t)\}_{\substack{j \in [l_1], k\in [l_2]\\0 \leq l \leq m \leq c_{\max}} }\right).
\end{equation*}
The functions $\gamma^{m}_{j,k;m}, 1\leq j\leq l_1, 1\leq k\leq l_2,m \leq c_{\max}$  approximating the additional quantities solve the differential equation
\begin{equation*}
\frac{\dd \gamma^{m}_{j,k;m}(\tau)}{\dd \tau} = \left(\1_{\{l\neq 0\}}\gamma^{m-1}_{j,k;m}(\tau)\right) \left(\frac{\tilde{w}_j^- \mu (\tau)}{\nu (\tau)}\right).
\end{equation*}
As 
\begin{equation*}
u(t)  = \sum_{\substack{j \in [l_1], k\in [l_2]\\0 \leq l \leq m \leq c_{\max}} }  c^{m}_{j,k;m}(t) - t,
\end{equation*}
one easily observes from (\ref{solution:nu}) and (\ref{solution:mu}) that 
\begin{equation*}
\gamma^{0}_{j,k;m}(\tau) = \gamma^{0}_{j,k;m}(0) \P \left[ \text{Poi} \left( \tilde{w}_j^- z(\tau) \right) \geq m \right].
\end{equation*}
The remainder of the proof of Theorem~\ref{thm:2} can then be applied without major changes. Assuming that
\begin{equation}\label{fixpoint:ext:setting}
\E [ W^+ W^- \P (\poi (z W^-)=C-1)\1_{C\geq 1} ] < \kappa <  1\text{ for }z \in (\hat{z}-\delta, \hat{z}+\delta )
\end{equation}
and some $\delta >0$, the convergence properties inherent in Definition~\ref{ext:reg:vertex:seq} allow to conclude that
\begin{eqnarray}
&&n^{-1} \mathcal{R}_n=n^{-1} \sum_{i \in \mathcal{A}_n} r_i \nonumber \\
&=& (1+ o_p(1)) \sum_{\substack{j \in [l_1], k\in [l_2]\\0 \leq l \leq m \leq c_{\max}} } \gamma^{0}_{j,k;m}(0) \P \left[ \text{Poi} \left( \tilde{w}_j^- z(\hat{\tau }) \right) \geq m \right] \E [R | (W^-,W^+,C)]\label{ext:R}\\
&=& (1+ o_p(1)) \E[R \psi_C  ( W^- \hat{z}) ]\nonumber 
\end{eqnarray}
where $\gamma^{0}_{j,k;m}(0)=\P(W^-=\tilde{w}_j^-,W^+=\tilde{w}_k^+,C=m)$. It follows that 
\begin{equation*}
n^{-1}\mathcal{R}_n \xrightarrow{p} \E [R \psi_C( W^- \hat{z} ) ], \text{ as } n\rightarrow \infty .
\end{equation*}
In case (\ref{fixpoint:ext:setting}) does not hold, Equation (\ref{ext:R}) becomes an inequality $(\leq )$  and it follows that for all $\epsilon>0$ with high probability: 
\begin{equation*}
n^{-1} \mathcal{R}_n \geq  \E [ R \psi_C( W^- \hat{z} ) ]-\epsilon.
\end{equation*}
This shows a counterpart of Theorem~\ref{fin:weight:res} for $\mathcal{R}_n$ instead of $\abs{\mathcal{A}_n}$. Note that in the steps outlined above it was not used that $R$ attains only finitely many values as it is the case for $W^-$ and $W^+$ in Sections~\ref{sec:finitary}. This is due to the fact that the relevance factor $R$ has no influence on the infection process. For this reason a generalization to general weight sequence can be done exactly as before with a sequence $\{(W^-,W^+,R,C)^i\}_{i \in \mathbb{N}}$, where the third entry can have the original marginal distribution of $R$ for all $i \in \mathbb{N}$.

\begin{theorem}\label{main:thm:ext}
Let $({\bf w^-}, {\bf w^+}, {\bf r}, \bf c  )$ be an extended regular vertex sequence with limiting distribution $F: \mathbb{R}^2 \times \mathbb{R}_+ \times \mathbb{N}^\infty_0 \rightarrow [0,1]$ such that $F(v,x,y,l)$ is continuous in $(v,x,y)$. Further, let $\P (C=0)>0$.
Denote by $\hat{z}$ the smallest positive solution of 
\begin{equation*}
f(z;(W^{-},W^{+},C))\stackrel{!}{=}0,
\end{equation*}
with $f$ as defined in (\ref{definition:f}). Then the following holds:
\begin{enumerate}
\item For all $\epsilon>0$ with high probability: 
\begin{equation*}
n^{-1} \mathcal{R}_n \geq  \E [ R \psi_C( W^- \hat{z} ) ]-\epsilon.
\end{equation*} 
\item \label{cond:22} If there exists $\delta$ such that $\E [ W^+ W^- \P (\poi (z W^-)=C-1)\1_{C\geq 1} ] < \kappa <  1$ for $z \in (\hat{z}-\delta, \hat{z}+\delta )$, then
\begin{equation*}
n^{-1}\mathcal{R}_n \xrightarrow{p}\E [ R \psi_C( W^- \hat{z} ) ], \text{ as } n\rightarrow \infty .
\end{equation*}
\end{enumerate}
\end{theorem}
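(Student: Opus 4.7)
The plan is to imitate the two-level strategy used for Theorem~\ref{thm:2}: first establish the result for extended regular vertex sequences that are finitary in $(w^-,w^+,c)$ via a direct analysis with Wormald's differential equation method, then pass to general extended sequences through the same sandwich coupling as in Section~\ref{sec:approximation}. The key observation that makes the extension essentially free is that the relevance $R$ plays no role in the dynamics of the percolation process; it is only used to weight the final infected set. In particular we never need $R$ to take finitely many values, and when we construct the finitary approximating sequences we can keep the marginal distribution of $R$ (and its joint dependence on $(W^-,W^+,C)$) unchanged.

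First I would refine the sequential exploration of the percolation process on a finitary vertex sequence by enlarging the set family tracked by the state vector $h(t)$: in addition to the sets $I_{j,k;m}^{l}(t)$ for $0\le l<m$ I would also maintain $I_{j,k;m}^{m}(t)$, the set of already-infected vertices with weight levels $(\tilde w_j^-,\tilde w_k^+)$ and threshold $m$. The expected one-step change of $c_{j,k;m}^{m}(t)$ satisfies
\[
\E[c^{m}_{j,k;m}(t)-c^{m}_{j,k;m}(t-1)\mid h(t-1)] = n^{-1}\1_{\{m\neq 0\}} c^{m-1}_{j,k;m}(t-1)\frac{\tilde{w}_j^- w(t-1)}{u(t-1)},
\]
so the extended system still satisfies the Lipschitz and boundedness hypotheses of \cite[Thm.~1]{wormald1995} on $D_{\delta_1,\delta_2}$, and integrating the ODE for $\gamma_{j,k;m}^{m}$ against the formulas in~\eqref{solution:nu}--\eqref{solution:mu} yields
$\gamma_{j,k;m}^{m}(\tau)=\gamma_{j,k;m}^{0}(0)\,\P[\poi(\tilde w_j^-z(\tau))\ge m]$. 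Summing $c_{j,k;m}^{m}(t)/n$ multiplied by the empirical mean of $r_i$ restricted to $I_{j,k;m}$ (which converges to $\E[R\mid W^-=\tilde w_j^-,W^+=\tilde w_k^+,C=m]$ by Definition~\ref{ext:reg:vertex:seq}) and evaluating at the stopping time $\hat\tau$ with $z(\hat\tau)=\hat z$ produces the limit $\E[R\,\psi_C(W^-\hat z)]$. Under the extra derivative assumption one reruns the branching-process style bound on $\mathcal W_l,\mathcal S_l$ from the proof of Theorem~\ref{fin:weight:res}, weighting each vertex by $r_i$; since $r_i$ is bounded under Definition~\ref{ext:reg:vertex:seq} (after truncation, handled below) the same geometric estimate shows the residual relevance is $o_p(1)$.

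For the general case I would construct the two finitary approximating sequences $({\bf w^-},{\bf w^+},{\bf r},{\bf c})_A^i$ and $({\bf w^-},{\bf w^+},{\bf r},{\bf c})_B^i$ by applying the partitioning of Section~\ref{seq:construction} only to $(w^-,w^+,c)$ and \emph{leaving} $r_i$ untouched for each $i$. Then the coupling in Section~\ref{proof:thm:2} still gives the set inclusions $\mathcal A_n(({\bf w^-},{\bf w^+},{\bf c})_B^i)\subseteq \mathcal A_n(({\bf w^-},{\bf w^+},{\bf c}))\subseteq \mathcal A_n(({\bf w^-},{\bf w^+},{\bf c})_A^i)$, which after multiplying by $r_i\ge 0$ translate directly into a sandwich for $\mathcal R_n$. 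Using Proposition~\ref{prop:subseq} with the function family $H$ augmented by $h_{4,z}(x,y,l)=\E[R\mid W^-=x,W^+=y,C=l]\psi_l(xz)$ (which is handled exactly like $h_{2,z}$ since $\E[R]<\infty$ provides the uniform tail bound required by Definition~\ref{convergence}\ref{tail:conv}) gives $\E[R\,\psi_{C^i_A}(W^-\hat z^i_A)]\to \E[R\,\psi_C(W^-\hat z)]$ and similarly for the $B$ side. Combining with the finitary result proved in the previous paragraph yields both claims of Theorem~\ref{main:thm:ext}.

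The main obstacle, and the only point that needs genuine care beyond bookkeeping, is the uniform tail control of $\int_{D_i^c} h_{4,z}\,dF^i_A$ under the $A$-approximation, because vertices in $[n]\setminus \bigcup {\bf D}_{k,j,l}$ were assigned threshold $0$ and thus contribute to $\mathcal R_n$ even when they should not. I would handle this by observing that $\int_{D_i^c}\E[R\mid W^-,W^+,C]\,dF^i_A(x,y,l)\le \int_{D_i^c}R\,dF(x,y,l)+\gamma(i)\sup r\to 0$ once we truncate the relevance at level $i$ in parallel with the truncation of weights, which is harmless since $\E[R]<\infty$ implies $\E[R\,\1_{R>i}]\to 0$; this truncation changes the target $\E[R\,\psi_C(W^-\hat z)]$ only by an $o(1)$ amount and can be absorbed in the $\varepsilon$ of part~1 and in the final limit of part~2.
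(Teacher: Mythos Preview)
Your proposal is correct and follows essentially the same approach as the paper: both enlarge the state vector in the sequential exploration to track the sets $I_{j,k;m}^{m}(t)$ of infected vertices by type, derive the corresponding ODE, and then extend to general sequences via the sandwich coupling of Section~\ref{sec:approximation} while keeping the relevance coordinate $r_i$ untouched in the finitary approximants (the paper states this last point explicitly). Your treatment is in fact more detailed than the paper's sketch---in particular your discussion of augmenting $H$ by $h_{4,z}$ and of truncating $R$ to handle the tail contribution from vertices in ${\bf D}_i^c$ under the $A$-approximation addresses a point the paper passes over with ``can be applied without major changes.''
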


Let as before $({\bf w^-}, {\bf w^+}, \bf r  , \bf c  )$ be an extended regular vertex sequence and let $F$ be its limiting distribution, and further $(W^-,W^+,R,C)$ a random variable with distribution $F$. We now assume that $\P (C=0)=0$.

Similarly we state extensions of Theorem~\ref{thm:epsilon:grow} and Theorem~\ref{prop:resilient} whose relevance for a financial context is apparent in \cite{Detering2015}. 

\begin{theorem}\label{thm:epsilon:grow:R}
Assume that $(W^-,W^+,C)$ is such that there exists $z_0>0$ such that for any $0< z < z_0$
\begin{equation}\label{cond:small:sets1}
\E [W^+ \psi_C (z W^-)] > z.
\end{equation}
Let $M$ be such that $\P (M = 0)>0$. Let $\mathcal{R}_n$ be the loss in relevance in $G_n({\bf w}^-, {\bf w}^+, \overline{\bf c})$, where $\overline{c}_i(n) = c_i(n) m_i(n)$, for all $i \in \mathbb{N}$. Then with high probability
\begin{equation}\label{small:fraction:bound1}
 n^{-1}\mathcal{R}_n \geq  \E[R\psi_C (W^- z_0 )] >0.
\end{equation}
\end{theorem}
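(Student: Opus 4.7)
The plan is to adapt the proof of Theorem~\ref{thm:epsilon:grow} from the fraction of infected vertices $n^{-1}|\mathcal A_n|$ to the aggregated relevance $n^{-1}\mathcal R_n$, using Theorem~\ref{main:thm:ext} (the relevance-weighted extension of Theorem~\ref{thm:2}) in place of Theorem~\ref{thm:2}. Write $C_M := C\cdot M$ for the limiting threshold distribution after the ex post infections, and let $\hat z$ denote the smallest positive zero of $f(z;(W^-,W^+,C_M))$. The argument proceeds in three steps.

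First, I would argue exactly as in the proof of Theorem~\ref{thm:epsilon:grow} that $\hat z > z_0$. Since $\psi_0 \equiv 1 > \psi_r$ for $r \ge 1$, assumption (\ref{cond:small:sets1}), the positivity $W^+ \ge w_0$, and the condition $\P(M=0) > 0$ together give $\E[W^+\psi_{C_M}(zW^-)] > \E[W^+\psi_C(zW^-)] \ge z$ for $z \in (0,z_0]$, with the first inequality being strict by the same choice of $\bar w$ as in the original proof (pick $\bar w$ with $\P(W^-\le \bar w,\,M=0)\ge \P(C_M=0)/2$). By continuity of $f$ the same strict inequality persists on a slightly larger interval $(0,z_0+\delta]$, and because $f(0;(W^-,W^+,C_M)) = \E[W^+\1_{\{C_M=0\}}] > 0$, the first positive zero $\hat z$ must lie in $(z_0,\infty)$.

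Second, I would invoke part~1 of Theorem~\ref{main:thm:ext} applied to the extended regular vertex sequence $({\bf w}^-,{\bf w}^+,{\bf r},\overline{\bf c})$: for every $\epsilon > 0$, with high probability
\begin{equation*}
  n^{-1}\mathcal R_n \ge \E[R\,\psi_{C_M}(W^-\hat z)] - \epsilon.
\end{equation*}
Next I would compare the right-hand side to the target $\E[R\,\psi_C(W^- z_0)]$. Since $\psi_r(x)$ is non-decreasing in $x$ and non-increasing in $r$, and since $\hat z > z_0$ while $C_M \le C$ pointwise, we have $\psi_{C_M}(W^-\hat z) \ge \psi_{C_M}(W^- z_0) \ge \psi_C(W^- z_0)$ almost surely. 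Multiplying by $R\ge 0$ and taking expectations yields
\begin{equation*}
  \E[R\,\psi_{C_M}(W^-\hat z)] \ge \E[R\,\psi_C(W^- z_0)].
\end{equation*}
Moreover, $\E[R\,\psi_C(W^- z_0)]$ is strictly positive: on $\{M=0\}$ one has $C_M = 0$ and $\psi_{C_M}(W^-\hat z)=1$, which combined with $\P(M=0)>0$ and the lower bound $W^- \ge w_0 > 0$ shows that $\psi_C(W^- z_0) \ge e^{-z_0 W^-}(z_0 W^-)^C/C!$ has strictly positive expectation against $R$ (here one uses that $R$ is not a.s.\ zero, which follows from $\E[R]>0$ --- otherwise there is nothing to prove).

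Third, the conclusion follows by choosing $\epsilon := \tfrac12\bigl(\E[R\,\psi_{C_M}(W^-\hat z)] - \E[R\,\psi_C(W^- z_0)]\bigr) \wedge \E[R\,\psi_C(W^- z_0)]$ when the first difference is strictly positive, and otherwise taking $\epsilon$ arbitrary in $(0,\E[R\,\psi_C(W^- z_0)])$; in either case (\ref{small:fraction:bound1}) holds with high probability. The main conceptual obstacle is purely the bookkeeping around going from $|\mathcal A_n|$ to $\mathcal R_n$, which however is absorbed into the extension Theorem~\ref{main:thm:ext}; there is no new technical difficulty beyond what was already done in Theorem~\ref{thm:epsilon:grow}, and in particular no differentiability or resilience-type condition on $f$ is needed because only the lower bound part of Theorem~\ref{main:thm:ext} is used.
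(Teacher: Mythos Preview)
Your approach is correct and is exactly what the paper intends: it does not write out a separate proof for Theorem~\ref{thm:epsilon:grow:R} but states it as the obvious relevance-weighted analogue of Theorem~\ref{thm:epsilon:grow}, obtained by replacing Theorem~\ref{thm:2} with Theorem~\ref{main:thm:ext}. Your three-step argument mirrors the proof of Theorem~\ref{thm:epsilon:grow} line by line.

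Two minor points. First, your justification that $\E[R\,\psi_C(W^- z_0)]>0$ is muddled: the detour through $\{M=0\}$ and $C_M$ is irrelevant here, since the target expression involves $C$, not $C_M$. The clean argument is that condition~(\ref{cond:small:sets1}) forces $\P(C<\infty)>0$, and on $\{C<\infty\}$ one has $\psi_C(W^-z_0)>0$ because $W^-\ge w_0>0$; strict positivity of the $R$-weighted expectation then needs the (implicit) assumption $\P(R>0,\,C<\infty)>0$. Second, the case split in your Step~3 does not actually deliver the stated bound when the difference vanishes: with $\epsilon>0$ you only get $n^{-1}\mathcal R_n \ge \E[R\,\psi_C(W^-z_0)]-\epsilon$, not $\ge \E[R\,\psi_C(W^-z_0)]$. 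The paper avoids this by noting (as in the proof of Theorem~\ref{thm:epsilon:grow}) that the inequality $\E[R\,\psi_{C_M}(W^-\hat z)] > \E[R\,\psi_C(W^-z_0)]$ is \emph{strict}: since $\hat z>z_0$ and $\psi_r$ is strictly increasing for $1\le r<\infty$, one has $\psi_C(W^-\hat z)>\psi_C(W^-z_0)$ on $\{1\le C<\infty\}$, and this event carries positive $R$-mass. With that strict inequality you can choose $\epsilon$ equal to the gap, exactly as in the paper's proof of Theorem~\ref{thm:epsilon:grow}.
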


\begin{theorem}\label{prop:resilient:ext}
Assume that $(W^-,W^+,C)$ is such that there exists $z_0>0$ such that for any $0< z < z_0$
\begin{equation*}
\E [ W^+ W^- \P (\poi (z W^-)=C-1)\1_{\{C\geq 1 \}} ] <  1.
\end{equation*}
Then, for any sequence of ex post infections $\{M^{(j)} \}_{j \in \mathbb{N}}$ with $\lim_{j \rightarrow \infty}\P(M^{(j)}=0)=0$, let $ \mathcal{R}^{(j)}_{n}$ be the loss in relevance in $G_n (({\bf w^-}, {\bf w^+}, {\bf r}, {\bf c}^{j}  ))$, where $c_i^{(j)}=c_i (n) m_i^{(j)} (n)$ for all $i \in [n]$ and $j \in \mathbb{N}$. Then for any $\varepsilon >0$, there exists $j_{\varepsilon}$ such that for $j \geq j_{\varepsilon}$ with high probability
\begin{equation}\label{ext:resulzt:resilient}
n^{-1}\mathcal{R}^{(j)}_{n} \leq \varepsilon.
\end{equation}
\end{theorem}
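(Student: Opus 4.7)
The plan is to mirror the proof of Theorem~\ref{prop:resilient} almost verbatim, replacing each invocation of Theorem~\ref{thm:2} by the relevance version Theorem~\ref{main:thm:ext}. The only new ingredient is $\E[R]<\infty$, which is built into Definition~\ref{ext:reg:vertex:seq} and will be needed exactly once, to bound the total relevance of the ex-post infected vertices.

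Given $\varepsilon > 0$, I would first introduce $\tilde g(z) := \E[R\,\psi_C(W^- z)]$ and observe via dominated convergence (with integrable majorant $R$, since $\psi_C \le 1$) that $\tilde g$ is continuous with $\tilde g(0) = 0$, using $\P(C=0)=0$. Pick $\delta \in (0, z_0)$ so that $\tilde g(z) \le \varepsilon/3$ on $[0,\delta]$. Running the argument of Theorem~\ref{prop:resilient} verbatim (which only relies on $\E[W^+]<\infty$ and the hypothesis of the present theorem) then yields $j_0$ such that, for every $j \ge j_0$, the smallest positive zero $\hat z_{M^{(j)}}$ of $f(z;(W^-,W^+,C_{M^{(j)}}))$ exists and lies in $(0, \delta]$.

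Since $\hat z_{M^{(j)}} \le \delta < z_0$ and $\{C_{M^{(j)}} \ge 1\} \subset \{M^{(j)} = 1,\, C \ge 1\}$, the inequality
\begin{equation*}
\E\!\left[W^+W^-\,\P(\poi(zW^-) = C_{M^{(j)}}-1)\,\1_{\{C_{M^{(j)}} \ge 1\}}\right] \le \E\!\left[W^+W^-\,\P(\poi(zW^-) = C-1)\,\1_{\{C \ge 1\}}\right] < 1
\end{equation*}
holds uniformly on a neighbourhood of $\hat z_{M^{(j)}}$, so condition (2) of Theorem~\ref{main:thm:ext} is met and yields $n^{-1}\mathcal{R}^{(j)}_n \xrightarrow{p} \E[R\,\psi_{C_{M^{(j)}}}(W^-\hat z_{M^{(j)}})]$ as $n\to\infty$. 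Splitting the latter expectation on $\{M^{(j)}=0\}$ versus $\{M^{(j)}=1\}$ and using $\psi_0\equiv 1$ together with $C_{M^{(j)}}=C$ on $\{M^{(j)}=1\}$ bounds it by $\tilde g(\hat z_{M^{(j)}}) + \E[R\,\1_{\{M^{(j)}=0\}}] \le \varepsilon/3 + \E[R\,\1_{\{M^{(j)}=0\}}]$.

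The main (and only genuinely new) obstacle is to show $\E[R\,\1_{\{M^{(j)}=0\}}] \to 0$: since $R$ is in general unbounded and the joint law of $(R,M^{(j)})$ varies with $j$, dominated convergence is not directly available. I would handle this by a standard truncation: for any $K>0$,
\begin{equation*}
\E[R\,\1_{\{M^{(j)}=0\}}] \le K\,\P(M^{(j)}=0) + \E[R\,\1_{\{R>K\}}],
\end{equation*}
so choosing $K$ so large that the tail term is at most $\varepsilon/6$ (possible by $\E[R]<\infty$) and then $j_\varepsilon \ge j_0$ so that $K\,\P(M^{(j)}=0) \le \varepsilon/6$ for $j \ge j_\varepsilon$ (possible by hypothesis) gives $\E[R\,\1_{\{M^{(j)}=0\}}] \le \varepsilon/3$. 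Combining the three bounds yields $n^{-1}\mathcal{R}^{(j)}_n \le \varepsilon$ with high probability for all $j \ge j_\varepsilon$, establishing~\eqref{ext:resulzt:resilient}.
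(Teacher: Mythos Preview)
Your proposal is correct and follows essentially the same route as the paper: the paper also says to rerun the proof of Theorem~\ref{prop:resilient} unchanged (since the fixpoints $\hat z_{M^{(j)}}$ do not involve $R$), invoke Theorem~\ref{main:thm:ext} in place of Theorem~\ref{thm:2}, and then use integrability of $R$ together with continuity of $z\mapsto \E[R\,\psi_C(zW^-)]$ to conclude. Your truncation argument for $\E[R\,\1_{\{M^{(j)}=0\}}]\to 0$ makes explicit the step the paper records in one line (``by integrability of $R$''), and is exactly the right justification given that the joint law of $(R,M^{(j)})$ varies with $j$.
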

For the changes necessary in the proof of Theorem~\ref{prop:resilient} one should observe that the fixpoints $\hat{z}_{M^{(j)}}$ used in the proof are not affected by the relevance variable $R$. Further by integrability of $R$ it follows that $\lim_{i\rightarrow \infty } \E [R \1 _{\{M^{(j)} =0\}}]=0$. Together with the continuity of the function $h(z):=\E [R \psi_C (z W^-)]$ it follows that $\lim_{i \rightarrow \infty } \E [R \psi_{C_{M^{(j)}}} (\hat{z}_{M^{(j)}} W^-)] =0 $ which implies (\ref{ext:resulzt:resilient}).

\bibliography{finance}
\bibliographystyle{abbrv}

\appendix

\section{Notes on the Derivative of $f$}\label{deriv:prop:f}

In this section we show that the requirements on $\E [ W^+ W^- \P (\poi (z W^-)=C-1)\1_{C\geq 1} ]$ in 2.\ of Theorem~\ref{thm:2} and in Theorem~\ref{prop:resilient} are satisfied if $f$ is differentiable with negative derivative around the fixpoint $\hat{z}$ or around $0$, respectively.
In fact, as the following two lemmas show, if $\E[W^+ W^-]< \infty $, the assumptions in Theorem~\ref{thm:2} and in Theorem~\ref{prop:resilient} could have equally be stated in terms of the derivative of $f$. However, when $\E[W^+ W^-]= \infty $ the function $f$ is not necessarily differentiable. 

\begin{lemma}\label{diff:f:second:moment}
If $\E[W^+ (W^-)^{1/2}]<\infty $ (respectively $\E[W^+ W^-]<\infty $) then $f$ is continuously differentiable in $(0,\infty)$ (respectively in $[0,\infty)$) and 
\begin{equation}
f' (z;(W^{-},W^{+}, C)) =\E [ W^+ W^- \P (\poi (z W^-)=C-1)\1_{C\geq 1} ]-1
\end{equation}
in the respective domains.
\end{lemma}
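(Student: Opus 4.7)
My plan is to first differentiate $\psi_r$ pointwise, then justify the interchange of expectation and differentiation by passing through an integrated form and Tonelli's theorem. First I would observe that $\psi_0 \equiv 1$, so $\psi_0' \equiv 0$, whereas for $r \geq 1$ term-by-term differentiation of $\psi_r(x) = \sum_{j \geq r} e^{-x} x^j/j!$ produces a telescoping series whose sum is $e^{-x} x^{r-1}/(r-1)! = \P(\poi(x) = r-1)$. The chain rule then suggests the candidate derivative of $z \mapsto W^+ \psi_C(W^- z)$ equals $W^+ W^- \P(\poi(W^- z) = C-1)\1_{\{C \geq 1\}}$.

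To move this candidate derivative inside the expectation, I would use the fundamental theorem of calculus pointwise,
\begin{equation*}
W^+ \psi_C(W^- z) - W^+ \psi_C(0) = \int_0^z W^+ W^- \P(\poi(W^- s) = C-1)\1_{\{C \geq 1\}}\, ds,
\end{equation*}
and then apply Tonelli's theorem to swap $\E$ with $\int_0^z (\cdot)\, ds$, obtaining
\begin{equation*}
f(z) + z - f(0) = \int_0^z \E\bigl[ W^+ W^- \P(\poi(W^- s) = C-1)\1_{\{C \geq 1\}} \bigr]\, ds.
\end{equation*}
From this identity, continuous differentiability of $f$ together with the stated formula for $f'$ follow once I verify that the inner integrand is continuous in $s$ and locally integrable on the relevant domain.

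The main technical work lies in producing the right dominating function. I would invoke the standard Stirling-based estimate $\P(\poi(\lambda) = k) \leq K \min\{1,\lambda^{-1/2}\}$, uniform in $k \in \mathbb{N}_0$, to obtain
\begin{equation*}
W^+ W^- \P(\poi(W^- s) = C-1) \leq W^+ \min\bigl\{W^-,\; K\sqrt{W^-/s}\bigr\}.
\end{equation*}
Integrating this upper bound in $s$ over $(0,z)$, splitting the domain at the crossover point $s^\star = K^2/W^-$ where the two arguments of the min coincide, yields a per-realization bound of order $W^+ + \sqrt{z}\, W^+ \sqrt{W^-}$. Under $\E[W^+\sqrt{W^-}] < \infty$ together with $\E[W^+] = \lambda^+ < \infty$ from the regularity of the weight sequence, the expectation of this bound is finite, so Tonelli applies; continuity of the function $s \mapsto \E[W^+ W^-\P(\poi(W^- s) = C-1)\1_{\{C\geq 1\}}]$ on $(0,\infty)$ then follows from dominated convergence on any compact $[a,b] \subset (0,\infty)$, using the uniform dominant $K W^+ \sqrt{W^-/a} + W^+/a$.

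Under the stronger hypothesis $\E[W^+W^-] < \infty$ the whole scheme becomes simpler: the integrand is bounded uniformly in $s \geq 0$ by the integrable variable $W^+W^-$, so Tonelli and dominated convergence apply on all of $[0,\infty)$, including at $s = 0$. The delicate step is the weaker hypothesis case, where I have to balance the $\lambda^{-1/2}$ decay of the Poisson point mass against the $s^{-1/2}$ singularity at the origin; this is precisely why continuous differentiability can only be asserted on $(0,\infty)$ in that regime, not on the closed half-line.
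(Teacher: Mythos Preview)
Your proof is correct and rests on the same key estimate as the paper's: the Stirling bound $\P(\poi(\lambda)=k)\le K\lambda^{-1/2}$, which converts the candidate derivative into something dominated by $W^+\sqrt{W^-}$ (up to a factor depending on $s$). The paper packages the argument slightly differently: it bounds the pointwise derivative $W^+W^-\P(\poi(zW^-)=C-1)\1_{C\ge1}$ directly by $D\,z^{-1/2}W^+(W^-)^{1/2}$, which is uniformly integrable on any interval $[z_0,\infty)$ with $z_0>0$, and then invokes the standard differentiation-under-the-expectation theorem together with dominated convergence for continuity. You instead write the identity in integrated form via the fundamental theorem of calculus, apply Tonelli, and recover $f'$ from continuity of the resulting integrand; your extra step of integrating the bound over $(0,z)$ with the split at $s^\star=K^2/W^-$ is a nice way to confirm local integrability near the origin, though for the conclusion on $(0,\infty)$ the paper's simpler uniform domination on compacts bounded away from $0$ already suffices. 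Both routes are equivalent here; yours is marginally more self-contained, the paper's marginally shorter.
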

\begin{proof}
As for $l \ge 1$
\begin{eqnarray*}
\frac{\partial}{\partial z} \psi_l (x z)&=& \frac{\partial}{\partial z} \left( \sum^{\infty}_{r=l}   e^{-x z} \frac{(x z)^r}{r \!!} \right)= x e^{-x z} \frac{(x z)^{l-1}}{(l-1)!} = x \P (\poi ( z x)=l-1)
\end{eqnarray*}
and $\frac{\partial}{\partial z} \psi_0 (x z)=0$, formally differentiating $f$ below the integral sign provides 
\begin{equation}\label{diff:cand}
f' (z;(W^{-},W^{+}, C))= \E [ W^+ W^- \P (\poi ( z  W^-)=C-1)\1_{C\geq 1} ]-1.
\end{equation}
Since $\P (\poi ( x)=y)$ is maximized for $y=x$ and $\P (\poi ( x)=x)=(1/\sqrt{2 \pi x})(1+o(1))$ by Stirling's formula, it follows that 
\begin{equation}\label{unif:bound}
 W^+ W^- \P (\poi ( z  W^-)=C-1)\1_{C\geq 1} \leq  W^+ W^- \frac{1}{(z W^-)^{1/2}} D \leq (\hat{z}  - \delta_0)^{-1/2} W^+ ( W^-)^{1/2}D,
\end{equation}
for suitable chosen $D$ and for $z\geq \hat{z}-\delta_0$. By Assumption $\E[W^+ (W^-)^{1/2}]<\infty $ it follows that $\E [(\hat{z}  - \delta_0)^{1/2} W^+ ( W^-)^{1/2}]=(\hat{z}  - \delta_0)^{1/2}  \E [W^+ ( W^-)^{1/2}]<\infty$ and therefore $W^+ W^- \P (\poi ( z  W^-)=C-1)\1_{C\geq 1}$ is uniformly bounded by an integrable function for $z\in (\hat{z}-\delta_0,\infty )$. This justifies differentiation under the expectation and proves equality (\ref{diff:cand}) for $z\in (0,\infty)$ since $\delta_0$ can be chosen arbitrarily small. 
By continuity of $ W^+ W^- \P (\poi ( z  W^-)=C-1)\1_{C\geq 1}$ in $z$ and again the uniform bound in (\ref{unif:bound}) it follows by the Dominated Convergence Theorem that 
\begin{equation}
\E [ W^+ W^- \P (\poi ( z  W^-)=C-1)\1_{C\geq 1} ]
\end{equation}
is continuous and $f(z;(W^{-},W^{+}, C))$ continuously differentiable in $(0,\infty)$. In the case $\E[W^+ W^-]<\infty $, one can simply bound the left hand side of (\ref{unif:bound}) by $W^+ W^-$ for $z\in [0,\infty)$ as $\P (\poi ( z  W^-)=C-1)\1_{C\geq 1} \leq 1$ and apply the same arguments.
\end{proof}
\begin{lemma}\label{diff:smaller:1}
Let $\P (C=0)>0$ and $\hat{z}$ be the first positive solution of 
\begin{equation*}
f(z;(W^-,W^+,C)) = 0.
\end{equation*}
If $f$ is differentiable and $f'(z;(W^{-},W^{+}, C))<\kappa<0$ in some neighborhood of $\hat{z}$ or $f$ continuously differentiable in some neighborhood of $\hat{z}$ and $f'(\hat{z};(W^{-},W^{+}, C))<0$, then there exists $\delta_1>0$ such that
\begin{equation}\label{f:diff:bound}
\E [ W^+ W^- \P (\poi (z W^-)=C-1)\1_{C\geq 1} ] < \kappa <  1,\text{ for } z \in (\hat{z} - \delta_1, \hat{z}+ \delta_1 ).
\end{equation}

If, on the contrary, $\P (C=0)=0$ and further $f$ differentiable and $f'(z;(W^{-},W^{+}, C))<0$ for $z \in (0,\delta )$ and some $\delta >0$ or $f$ continuously differentiable in $[0,\delta )$ and $f'(0;(W^{-},W^{+}, C))<0$ for some $\delta > 0$ then 
\begin{equation}\label{f:diff:bound:0}
\E [ W^+ W^- \P (\poi (z W^-)=C-1)\1_{C\geq 1} ] < 1,\text{ for }z \in (0,\delta_1 )
\end{equation}
for some $\delta_1>0$.
\end{lemma}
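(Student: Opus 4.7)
The plan is to establish the pointwise inequality
$$g(z) \;\leq\; f'(z) + 1$$
at every point $z$ in the neighborhood where $f$ is differentiable, where $g(z) := \E[W^+W^-\P(\poi(zW^-)=C-1)\1_{C\geq 1}]$ is the expression we wish to bound. Writing $\phi(z) := \E[W^+\psi_C(zW^-)] = f(z)+z$, both parts of the lemma then reduce to a direct substitution of the respective hypothesis on $f'$, and the continuously differentiable sub-cases reduce to the differentiable ones by shrinking the neighborhood using continuity of $f'$.

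The first ingredient is the integral identity
$$\phi(z+h) - \phi(z) \;=\; \int_z^{z+h} g(s)\, ds, \qquad h>0.$$
This follows from the pointwise identity $\psi_C((z+h)W^-) - \psi_C(zW^-) = \int_z^{z+h} W^- \P(\poi(sW^-)=C-1)\1_{C\geq 1}\, ds$, which for finite $C\geq 1$ comes from the computation $\frac{d}{ds}\psi_r(sx) = x\P(\poi(sx)=r-1)$ already used in the proof of Lemma~\ref{diff:f:second:moment}, and is trivial for $C \in \{0,\infty\}$. An application of Fubini--Tonelli, unconditionally valid because the integrand is non-negative, then produces the displayed identity.

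The main technical ingredient is lower semi-continuity of $g$ on $(0,\infty)$. For each realization of $(W^-,W^+,C)$ the map $s \mapsto W^+W^-\P(\poi(sW^-)=C-1)\1_{C\geq 1}$ is continuous in $s$, so Fatou's lemma applied along any sequence $s_n \to z$ in $(0,\infty)$ yields $g(z) \leq \liminf_n g(s_n)$, which is precisely the definition of $g$ being LSC. Combined with the integral identity, LSC at $z$ gives
$$\liminf_{h \downarrow 0}\frac{1}{h}\int_z^{z+h} g(s)\, ds \;\geq\; g(z),$$
since for arbitrary $\varepsilon>0$ one has $g(s) > g(z)-\varepsilon$ for $s$ close enough to $z$ when $g(z)<\infty$, and the inequality is trivial when $g(z)=\infty$. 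Whenever $\phi$ (equivalently $f$) is differentiable at $z$, the left-hand side equals $\phi'(z) = f'(z)+1$, so we obtain $g(z) \leq f'(z)+1$, and in particular $g(z)$ is finite at any point where $f$ is differentiable.

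With the key inequality in hand, the first part of the lemma is immediate: the hypothesis $f'(z) < \kappa < 0$ on some neighborhood of $\hat z$ yields $g(z) < 1+\kappa < 1$ throughout that neighborhood, and the continuously differentiable sub-case reduces to this by choosing a smaller neighborhood on which $f'(z) < f'(\hat z)/2 < 0$ by continuity. The ``contrary'' part is handled identically: $f'(z) < 0$ on $(0,\delta)$ yields $g(z) < 1$ on that interval, and in the continuously differentiable sub-case one again passes to a smaller interval using continuity of $f'$ at $0$. The main obstacle is moving from the almost-everywhere equality $g = f'+1$ (which would follow easily from Fubini and Lebesgue differentiation) to a pointwise inequality without assuming a dominating function that makes differentiation under the integral legal; lower semi-continuity of $g$, derivable from Fatou without any moment assumption, is precisely the ingredient that bridges this gap.
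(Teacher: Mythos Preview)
Your proof is correct and follows essentially the same approach as the paper: both arguments establish the pointwise inequality $g(z) \leq f'(z)+1$ via Fatou's lemma, exploiting that the relevant difference quotients (respectively integrands) are non-negative so no dominating function is needed. The only difference is packaging: the paper applies Fatou directly to the difference quotients $h^{-1}W^+\bigl(\psi_C((z+h)W^-)-\psi_C(zW^-)\bigr)$, whereas you first pass through the Fubini identity $\phi(z+h)-\phi(z)=\int_z^{z+h}g(s)\,ds$ and then invoke lower semi-continuity of $g$ (itself a Fatou consequence); both routes arrive at the same inequality and the remaining deductions are identical.
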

\begin{proof}

We consider the case $\P(C=0)>0$. Note that if $f$ is continuously differentiable in some neighborhood of $\hat{z}$ and $f'(\hat{z};(W^{-},W^{+}, C))<0$, then necessarily $f'(z;(W^{-},W^{+}, C))<\kappa<0$ in some neighborhood of $\hat{z}$ and it is sufficient to prove (\ref{f:diff:bound}) under this Assumption. 

Since $f^{'}(z;(W^{-},W^{+}, C))< \kappa <0$ for $z\in (\hat{z} - \delta_1,\hat{z} + \delta_1 )$ and some $\delta_1$, it follows by the definition of $f$ that
\begin{equation*}
\lim_{h\rightarrow 0 }h^{-1}\left (\E [ W^{+} \psi_C( (z+h) W^{-} ) ] -\E [ W^{+} \psi_C( (z) W^{-} ) ]\right) <\kappa < 1
\end{equation*}
 for $z\in (\hat{z} - \delta_1,\hat{z} + \delta_1 )$. Furthermore, for $l \ge 1$
\begin{eqnarray*}
\frac{\partial}{\partial z} \psi_l (x z)&=& \frac{\partial}{\partial z} \left( \sum^{\infty}_{r=l}   e^{-x z} \frac{(x z)^r}{r \!!} \right)= x e^{-x z} \frac{(x z)^{l-1}}{(l-1)!} = x \P (\poi ( z x)=l-1)
\end{eqnarray*}
and $\frac{\partial}{\partial z} \psi_0 (x z)=0$. Therefore it is possible to write
\begin{equation*}
\E [ W^+ W^- \P (\poi ( z  W^-)=C-1)\1_{C\geq 1} ]=\E\left[\lim_{h\rightarrow 0} W^+ \left( \psi_C ((z+h)W^-)-\psi_C (z W^-)\right)\1_{C\geq 1}/h\right],
\end{equation*}
where $W^+ \left( \psi_C ((z+h)W^-)-\psi_C (z W^-)\right)\1_{C\geq 1}/h>0$ for every $h,z>0$. By Fatou's lemma, this allows us to conclude that for any sequence $\{h_i\}_{i\in \mathbb{N}}$, with $\lim_{i\rightarrow \infty} h_i=0$
\begin{eqnarray}\nonumber
&&\E[\liminf_{i \rightarrow \infty}W^+ \left( \psi_C ((z+h_i)W^-)-\psi_C (z W^-)\right)\1_{C\geq 1}/h_i]\\ \nonumber
&\leq &\liminf_{i \rightarrow \infty} \E[W^+ \left( \psi_C ((z+h_i)W^-)-\psi_C (z W^-)\right)\1_{C\geq 1}/h_i]\\ \nonumber
&=& \lim_{i \rightarrow \infty} \{ \left( \E[W^+ \psi_C ((z+h_i)W^-)]-\E[W^+ \psi_C (z W^-)] \right) /h_i \}\\ \nonumber
&=& f^{'}(z;(W^{-},W^{+}, C)) <\kappa  < 1,
\end{eqnarray}
for $z\in (\hat{z}-\delta_1,\hat{z}+\delta_1 )$. 

We consider the case $\P (C=0) =0$. There $f$ continuously differentiable in $[0,\delta_1)$ and $f'(0;(W^{-},W^{+}, C))<0$ implies that $f'(z;(W^{-},W^{+}, C))<0$ for $z \in (0,\delta_1 )$ and for some $\delta_1 >0$. With this observation the bound in (\ref{f:diff:bound:0}) can then be shown by essentially the same reasoning as in the case $\P (C=0)>0$.
\end{proof}
An interesting observation can be made for the case $\P (C=0)=0$. Assume there exists $z_0 >0$ such that $$f' (z)= \E [ W^+ W^- \P (\poi ( z \cdot W^-)=C-1)\1_{C\geq 1} ]-1,$$ for $z\in [0,z_0)$, that is $f$ is differentiable and $f'$ equals its candidate obtained by differentiation under the expectation sign, and further $f'$ continuous in this domain. This is especially the case if $\E [ W^+ W^-]< \infty$ as shown in Lemma~\ref{diff:f:second:moment}. Note that
\begin{equation}
\E [ W^+ W^- \P (\poi ( 0 \cdot W^-)=C-1)\1_{C\geq 1} ]=\E [ W^+ W^- \P (\poi ( 0  )=C-1)\1_{C = 1} ]
\end{equation}
as $\P (\poi ( 0)=c-1)= 0$ for $c> 1$. By continuity of $f'$,$$\E [ W^+ W^- \P (\poi ( 0  )=C-1)\1_{C = 1} ]<1$$ implies that $$\E [ W^+ W^- \P (\poi ( z \cdot W^- )=C-1)\1_{C\geq 1} ]<1$$ for $z$ in some neighborhood of $0$ and thus resilience depends only on vulnerable vertices. This shows that the functional $f$ has a delicate behaviour in $0$ if there are only few or no vulnerable vertices but the network is non-resilient. In fact, if $\E [ W^+ W^- \P (\poi ( 0  W^-)=C-1)\1_{C = 1} ]<1$ and the network is non-resilient, then often $\lim_{z \rightarrow 0 } f' (z) = \infty $.
\end{document}